\newcommand{\s}[1]{{\mathcal #1}}
\newcommand{\bb}[1]{{\mathbb #1}}
\newcommand{\ip}[2]{\left\langle #1,#2 \right\rangle}
\newcommand{\floor}[1]{\left\lfloor #1 \right\rfloor}
\DeclareMathOperator{\Lip}{Lip}
\newtheorem{theorem}{Theorem} 
\newtheorem{corollary}[theorem]{Corollary}
\newtheorem{lemma}[theorem]{Lemma}
\newtheorem{proposition}[theorem]{Proposition}
\newtheorem{remark}[theorem]{Remark}
\numberwithin{equation}{section}
\numberwithin{theorem}{section}
\begin{document}

	\title{Parameter sensitivity analysis for mean field games of production}

	\author{P. Jameson Graber}
	\thanks{Jameson Graber gratefully acknowledges support from the National Science Foundation through NSF CAREER Award 2045027 and NSF Grant DMS-1905449.
	Marcus Laurel is also thankful to be partially supported by NSF Grant DMS-1905449.}
	\address{J. Graber: Baylor University, Department of Mathematics;
		Sid Richardson Building\\
		1410 S.~4th Street\\
		Waco, TX 76706\\
		Tel.: +1-254-710-3480 \\
		Fax: +1-254-710-3569 
	}
	\email{Jameson\_Graber@baylor.edu}
	
	\author{Marcus Laurel}
	\address{M. Laurel: Baylor University, Department of Mathematics;
		Sid Richardson Building\\
		1410 S.~4th Street\\
		Waco, TX 76706
	}
	\email{Marcus\_Laurel@baylor.edu}

	\subjclass[2010]{35Q91, 35F61, 49J20}
	\date{\today}   
	
	\begin{abstract}
		We study a mean field game system introduced by Chan and Sircar (\emph{AMO}, 2015) to model production of an exhaustible resource.
		In particular, we study the sensitivity of the solution with respect to a parameter $\varepsilon$, which measures the degree to which producers are interchangeable.
		We prove that on some interval $[0,\varepsilon_0]$, where $\varepsilon_0 > 0$, the solution is infinitely differentiable with respect to $\varepsilon$.
		The result is based on a set of new a priori estimates for forward-backward systems of linear partial differential equations.
	\end{abstract}
	
	\keywords{mean field games, forward-backward systems, absorbing boundary conditions, nonlinear partial differential equations}
	
	\maketitle
	
	
\section{Introduction} \label{sec:intro}

Consider the following system of nonlinear partial differential equations:
\begin{equation}\label{u-msystem}
	\begin{cases}
		\begin{array}{lll}
			(i)&\displaystyle u_t+\frac{\sigma^2}{2}u_{xx}-ru+F(t,u_x,m;\varepsilon)^2=0, \text{\hspace{2cm}} &0\leq x< \infty,\,\,0\leq t\leq T\\
			(ii)&\displaystyle m_t-\frac{\sigma^2}{2}m_{xx}-\left[F(t,u_x,m;\varepsilon)m\right]_x=0, &0\leq x< \infty,\,\,0\leq t\leq T\\
			(iii)&\displaystyle m(x,0)=m_0(x),\text{\hspace{.2cm}}u(x,T)=u_T(x), &0\leq x< \infty\\
			(iv)&\displaystyle u(0,t)=m(0,t)=0, &0\leq t\leq T
		\end{array}
	\end{cases}
\end{equation}
where the data consist of a parameter $\varepsilon\geq0$, a time horizon $T > 0$, a smooth probability density $m_0(x)$, a smooth function $u_T(x)$, and two positive constants $\sigma$ and $r$, and where
$F(t,u_x,m;\varepsilon)$ is given by
\begin{equation}\label{def:F}
	F(t,u_x,m;\varepsilon):=\frac{1}{2}\left(\frac{2}{2+\varepsilon\xi(t)}+\frac{\varepsilon\xi(t)}{2+\varepsilon\xi(t)}\int_0^\infty u_x(x,t)m(x,t)\dif x-u_x(x,t)\right)
\end{equation}
for some fixed smooth function $\xi:[0,T]\to[0,1]$ such that $\xi(T)=0$.
We assume that $m_0$ and $u_T$ satisfy zero- and first-order compatibility conditions so that solutions of \eqref{u-msystem} are classical (see \cite{graber2021nonlocal}).

System \eqref{u-msystem} models a particular kind of \emph{mean field game} for the production of exhaustible resources, as proposed by Chan and Sircar in \cite{chan2015bertrand}, cf.~\cite{chan2017fracking,gueant2011mean}.
In this game, players control the rate at which they will sell from their current stock, and they must leave the game once that stock goes to zero.
The optimal strategies are determined by the market price.
In equilibrium, the market price is determined by the average of all the players' strategies.
When the demand depends linearly on the production rate (or on the price offered), one can determine the equilibrium by solving System \eqref{u-msystem}; in particular, if $(u,m)$ is the solution, then $F(t,u_x,m;\varepsilon)$ gives the equilibrium rate of production and $m(x,t)$ gives the density of players whose remaining stock is $x$ at time $t$.
The parameter $\varepsilon$ measures the degree to which firms compete.
If $\varepsilon = 0$, every firm is a monopolist, while if $\varepsilon$ is very large, all firms are nearly interchangeable in the eyes of consumers.

Existence and uniqueness of solutions to System \eqref{u-msystem} has been established under the assumptions we make in this article, thanks to the results found in \cite{graber2018existence,graber2020commodities,graber2021nonlocal}, cf.~\cite{graber2018variational,graber2016linear}.
In \cite{chan2015bertrand}, Chan and Sircar proposed a numerical method to solving \eqref{u-msystem} that consists of assuming the following Taylor expansion:
\begin{equation}
	\begin{split}
		u(x,t) &= u^{(0)}(x,t) + \varepsilon u^{(1)}(x,t) + \frac{\varepsilon^2}{2}u^{(2)}(x,t) + \cdots,\\
		m(x,t) &= m^{(0)}(x,t) + \varepsilon m^{(1)}(x,t) + \frac{\varepsilon^2}{2}m^{(2)}(x,t) + \cdots.
	\end{split}
\end{equation}
Formally, $u^{(k)}$ and $m^{(k)}$ can be derived by differentiating System \eqref{u-msystem} $k$ times with respect to $\varepsilon$, letting $\varepsilon = 0$, and then solving.
Notice that setting $\varepsilon = 0$ decouples the system.
Thus $u^{(k)}$ and $m^{(k)}$ are computed by solving two equations separately,  avoiding the computational difficulties arising from the forward/backward-in-time coupling.

To establish rigorously the accuracy of this numerical method, one must answer the question: is the solution to System \eqref{u-msystem} differentiable with respect to $\varepsilon$?
In the present article, we provide an affirmative answer to this question under generic assumptions on the data.
In particular, we establish that there exists $\varepsilon_0>0$ small enough such that for every $k\in\bb{N}$,  the solution to System \eqref{u-msystem} is $k$ times differentiable with respect to $\varepsilon$ for  $0 \leq \varepsilon \leq \varepsilon_0$.
See Theorem \ref{thm:ExistenceUniquenessEtc_un/mn} below.
More precisely, we establish that the difference between the solution and its $k$th order Taylor expansion is bounded by $C_k\varepsilon^{k+1}$ for some constant $C_k$.
On the other hand, the constants $C_k$ that appear in our analysis blow up very fast as $k \to \infty$, and for this reason we conjecture that the infinite series expansion does not in fact converge (see Remark \ref{rem:constants}).

While the numerical method of Chan and Sircar gives a practical application, our work is also motivated by the theory of forward-backward linear systems of equations, which turn out to be of fundamental importance in mean field game theory.
In their seminal work \cite{cardaliaguet2019master}, Cardaliaguet et al.~show that one can established well-posedness of the \emph{master equation} by the following steps: (1) solve the mean field game system, (2) linearize the system by formally differentiating with respect to the measure variable $m$, and, most crucially, (3) proving \emph{estimates on solutions to linearized systems}, which have as a corollary that the solution from step 1 is differentiable with respect to the initial measure.
We do not study the master equation in this article.
However, our sensitivity analysis follows exactly the same steps, where instead of linearizing with respect to an infinite-dimensional measure variable, we differentiate with respect to $\varepsilon$.
At an abstract level, the system of linear equations we study has the form
\begin{equation}\label{abstract_system}
	\begin{cases}
		\begin{array}{lll}
			(i)&\displaystyle w_t+\frac{\sigma^2}{2}w_{xx}-rw+\Psi+F^{(0)}(\varepsilon)\big(G(t,w_x,\mu;\varepsilon)-w_x\big)=0, \text{\hspace{.5cm}} &0\leq x< \infty,\,\,0\leq t\leq T\vspace{4pt}\\
			(ii)&\displaystyle \mu_t-\frac{\sigma^2}{2}\mu_{xx}-\big(F^{(0)}\mu\big)_x=\Big[\Phi+\frac{1}{2}\big(G(t,w_x,\mu;\varepsilon)-w_x\big)m^{(0)}\Big]_x, &0\leq x< \infty,\,\,0\leq t\leq T\vspace{4pt}\\
			(iii)&\displaystyle \mu(x,0)=0,\text{\hspace{.2cm}}w(x,T)=0, &0\leq x< \infty\vspace{4pt}\\
			(iv)&\displaystyle w(0,t)=\mu(0,t)=0, &0\leq t\leq T,
		\end{array}
	\end{cases}
\end{equation}
where $F^{(0)},  u^{(0)}, m^{(0)}, \Psi,$ and $\Phi$ are given functions, and 
\begin{equation}\label{def:G}
	G(w_x,\mu;\varepsilon):=\beta\big(\varepsilon\xi(t)\big)\int_0^\infty \Big(u_x^{(0)}\mu+m^{(0)}w_x\Big)\dif x.
\end{equation}
with $\beta$ a known function such that $\beta(\varepsilon)\to0$ as $\varepsilon\to0$. Our \emph{main mathematical contribution} in this article is a set of a priori estimates on solutions to System \eqref{abstract_system}.
(See Section \ref{sec:a_priori}.)
The estimates share three types in common with \cite[Section 3.3]{cardaliaguet2019master}:
(1) ``energy estimates," derived by expanding $\od{}{t} \ip{w}{\mu}$ and using the duality between equations (i) and (ii);
(2) standard Schauder estimates on parabolic equations;
and (3) estimates on $\mu$ in the space dual to a certain H\"older space.
Since the coupling depends on integral terms involving the unknowns, the energy estimates are considerably more technical than for standard mean field games.
In particular, they require a fourth type of estimate, namely (4) estimates on $\int_0^T \del{\int_0^\infty \mu(x,t)\dif x}^2 \dif t$; see Section \ref{sec:a priori F-P}.
Additionally, because the boundary conditions in our model are of Dirichlet type, the estimates on $\mu$ in the dual of a H\"older space require additional care compared to similar estimates on a torus; see Section \ref{subsec:Duality}.
All of our results hold in one space dimension, on which the problem was originally posed by Chan and Sircar (cf.~\cite{gueant2011mean}).
In higher space dimensions, the Dirichlet boundary conditions pose an additional difficulty for the purposes of regularity, since there would be a corner in the domain, although the interior estimates are expected to hold as in the one-dimensional case.
This is a technical aspect we do not address in the present work.

Mean field games were introduced circa 2006 by Lasry and Lions \cite{lasry07} and by Caines, Huang, and Malham\'e \cite{huang2006large}.
Since then, both theory and applications have been well-studied.
For a general exposition, we refer the reader to the texts \cite{bensoussan2013mean,carmona2017probabilistic,carmona2017probabilisticII}.
Economic models are a common application of mean field game theory.
See e.g.~the overviews in \cite{achdou2014partial,gueant2008application,gomes2015economics}, and for the particular example of exhaustible resource production see \cite{chan2015bertrand,chan2017fracking,gueant2011mean}.
When the equilibrium strategy depends on the distribution of controls, as is often the case for economics applications, we give the name \emph{mean field game of the controls} \cite{cardaliaguet2017mfgcontrols} (or \emph{extended mean field games} \cite{gomes2014extended}) to the resulting mathematical model.
The theory of partial differential equations for mean field games of controls has been developed in \cite{cardaliaguet2017mfgcontrols,kobeissi2021classical,kobeissi2020mean,gomes2016extended,gomes2014extended,graber2021weak}.
The present work is, to the best of our knowledge, the first study of parameter sensitivity for a mean field game of controls.
Some of the estimates presented here will be useful in a forthcoming study of the master equation for a mean field game of controls with absorbing boundary conditions \cite{graber2021master}.

The remainder of this article is organized as follows.
In the rest of this introduction, we define some notation and present the main result.
Section \ref{sec:a_priori} is the core of the paper and provides a priori estimates on systems of equations with the abstract form \eqref{abstract_system}.
In Section \ref{sec:L-S_Theorem} we prove existence and uniqueness of solutions to \eqref{abstract_system}.
Finally, in Section \ref{sec:Induction}, we prove the main result.

\subsection{Notation}

Throughout this manuscript, $L^p(D)$ ($1 \leq p \leq \infty$) will denote the usual Lebesgue space on a domain $D$ with standard norm, denoted either by $\enVert{\cdot}_{L^p(D)}$ or simply $\enVert{\cdot}_p$.
We will often consider the space-time domain $D = (0,\infty) \times [0,T]$, on which we consider the space $L_t^p L_x^q$ of functions $f = f(x,t)$ such that the map $t \mapsto \enVert{f(\cdot,t)}_q$ is in $L^p(0,T)$.
The norm is given by
\begin{equation}
	\enVert{f}_{L_t^p L_x^q} = \enVert{t \mapsto \enVert{f(\cdot,t)}_q}_p.
\end{equation}
For $\alpha \in (0,1)$ the space $\s{C}^{\alpha} = \s{C}^{\alpha}\del{\intco{0,\infty}}$ denotes the space of H\"older continuous functions $u$ such that the following norm is finite:
\begin{equation}
	\enVert{u}_{\s{C}^{\alpha}} := \enVert{u}_\infty + \sup \cbr{\frac{\abs{u(x) - u(y)}}{\abs{x-y}^\alpha} : x,y \geq 0,  \ x \neq y}.
\end{equation}
If $k$ is an integer, $\s{C}^{k + \alpha} = \s{C}^{k + \alpha}\del{\intco{0,\infty}}$ denotes the space of $k$ times differentiable functions $u$ such that $\od[k]{u}{x} \in \s{C}^\alpha$.
It is a Banach space with the norm
\begin{equation}
	\enVert{u}_{\s{C}^{k + \alpha}} = \enVert{\dod[k]{u}{x}}_{\s{C}^\alpha} +  \enVert{u}_\infty,
\end{equation}
which by standard interpolation results is equivalent to the norm $\enVert{\od[k]{u}{x}}_{\s{C}^\alpha} +  \sum_{j=0}^{k-1} \enVert{\od[j]{u}{x}}_\infty$.

For $\alpha, \beta \in (0,1)$ the space $\s{C}^{\alpha,\beta} = \s{C}^{\alpha,\beta}\del{\intco{0,\infty},[0,T]}$ denotes the space of all H\"older continuous functions $u$ such that the following norm is finite:
\begin{equation}
	\enVert{u}_{\s{C}^{\alpha,\beta}} := \enVert{u}_\infty + \sup \cbr{\frac{\abs{u(x,t) - u(y,s)}}{\abs{x-y}^\alpha + \abs{t-s}^\beta} : x,y \geq 0, \ t,s \in [0,T], \ x \neq y, \ t \neq s}.
\end{equation}
If $j,k$ are integers, we can also define $\s{C}^{j + \alpha,k + \beta}$ analogously.
In particular, the space $\s{C}^{2+\alpha,1+\alpha/2}$ denotes the standard H\"older space for parabolic equations (cf.~\cite{ladyzhenskaia1968linear});
its norm can be written
\begin{equation}
	\enVert{u}_{\s{C}^{2+\alpha,1+\alpha/2}} = \enVert{u}_\infty + \enVert{\dpd{u}{t}}_{\s{C}^{\alpha,\alpha/2}} + \enVert{\dpd[2]{u}{x}}_{\s{C}^{\alpha,\alpha/2}}.
\end{equation}

Given any function $f(x,t;\varepsilon)$, we denote the $k$-th partial derivative of $f(x,t;\varepsilon)$ with respect to $\varepsilon$ by
\begin{equation}\label{notation:dd epsilon}
	\displaystyle f^{(k)}=f^{(k)}(x,t;\varepsilon):=\frac{\partial^k}{\partial\varepsilon^k}\left[f(x,t;\varepsilon)\right],
\end{equation}
with the convention that $f^{(0)}=f$.

In studying System \eqref{abstract_system},
we will frequently suppress notation and write $G(u_x,m;\varepsilon)$, $G(\varepsilon)$, or even $G$ to denote $G(t,u_x,m;\varepsilon)$, provided that no ambiguity arises.
Analogous statements hold for other functionals that depend on multiple arguments.

\subsection{Statement of the main result} \label{sec:setup}

Throughout this paper, we assume that conditions hold on the data so that System \eqref{u-msystem} has a unique solution $(u,m)$ satisfying $u,m \in \s{C}^{2+\alpha,1+\alpha/2}$ for some $\alpha \in (0,1)$.
Sufficient conditions are provided in \cite{graber2021nonlocal}, cf.~\cite{graber2018existence}.
We will denote this solution by $(u^{(0)},m^{(0)})$.
In addition to smoothness, we will also need to assume the initial density $m_0$ has finite first moment $\int_0^\infty x m_0(x)\dif x < \infty$.
\\\\

\begin{comment}
Our goal is to study System \eqref{u-msystem} upon differentiating with respect to $\varepsilon$.
We will use $u^{(k)}$ and $m^{(k)}$ to denote the formal $k$-th derivatives with respect to $\varepsilon$ of $u$ and $m$ in accordance with the notation established in \eqref{notation:dd epsilon}.
Next, observe that $u_T(x)$ does not vary with $\varepsilon$, so $u^{(k)}_T(x)=0$.
When $t=T$, we have that $F^{(k)}(x,T,\varepsilon)=0$, since $\xi(T)=0$ and $u_x(0,T)=u'_T(x)$.
Similarly, $M(x)$ does not depend on $\varepsilon$ as an initial condition, so $m^{(k)}(x,0)=M^{(k)}(x)=0$.
Dirichlet boundary conditions on the left also remain for $u^{(k)}$ and $m^{(k)}$.
\end{comment}
Formally, if we differentiate \eqref{u-msystem} $k$ times with respect to $\varepsilon$, we obtain the system
\begin{equation}\label{un-mnsystem}
	\begin{cases}
		\begin{array}{lll}
			(i)&\displaystyle u_t^{(k)}+\frac{\sigma^2}{2}u_{xx}^{(k)}-ru^{(k)}+J_k\big(x,t, u_x^{(k)}, m^{(k)};\varepsilon\big)=0, &0\leq x< \infty,\text{\hspace{.1cm}} 0\leq t\leq T\vspace{4pt}\\
			(ii)&\displaystyle m_t^{(k)}-\frac{\sigma^2}{2}m_{xx}^{(k)}-(K_k)_x\big(x,t, u_x^{(k)}, m^{(k)};\varepsilon\big)=0, &0\leq x< \infty,\text{\hspace{.1cm}}0\leq t\leq T\vspace{4pt}\\
			(iii)&\displaystyle
			m^{(k)}(x,0)=0, \hspace{.2cm} u^{(k)}(x,T)=0, &0\leq x< \infty\vspace{4pt}\\
			(iv)&\displaystyle
			u^{(k)}(0,t)=m^{(k)}(0,t)=0, &0\leq t\leq T,
		\end{array}
	\end{cases}
\end{equation}
where
\begin{equation}\label{def:Jk}
	J_k\big(x,t,u_x^{(k)},m^{(k)};\varepsilon\big):=\sum_{j=0}^{k}\binom{k}{j}F^{(j)}\big(x,t,u_x^{(j)}, m^{(j)};\varepsilon\big)F^{(k-j)}\big(x,t,u_x^{(k-j)}, m^{(k-j)};\varepsilon\big)
\end{equation}
and
\begin{equation}\label{def:Kk}
	K_k\big(x,t, u_x^{(k)}, m^{(k)};\varepsilon\big):=\sum_{j=0}^k\binom{k}{j}F^{(j)}\big(x,t,u_x^{(j)}, m^{(j)};\varepsilon\big)m^{(k-j)}(x,t).
\end{equation}
For clarity, we note that the full expression for $F^{(k)}$ is given by
\begin{multline}\label{def:Fk}
	F^{(k)}\big(x,t, u_x^{(k)}, m^{(k)};\varepsilon\big)=
	\frac{1}{2}\xi(t)^k\alpha^{(k)}(\varepsilon\xi(t))-\frac{1}{2}u_x^{(k)}(x,t)
	\\[4pt]
	+\frac{1}{2}\sum_{i=0}^k\sum_{j=0}^{k-i}\binom{k}{i}\binom{k-i}{j}\xi(t)^i\beta^{(i)}\big(\varepsilon\xi(t)\big)\int_0^\infty u_x^{(j)}(x,t)m^{(k-i-j)}(x,t)\dif x,
\end{multline}
where
\begin{equation}\label{def:a_and_b}
	\alpha(\varepsilon):=\frac{2}{2+\varepsilon}\,\,\text{ and }\,\,\beta(\varepsilon):=\frac{\varepsilon}{2+\varepsilon},
\end{equation}
so that for $k\in\bb{N}$ with $k\geq1$,
\begin{equation}
	\alpha^{(k)}(\varepsilon)=\frac{2(-1)^k k!}{(2+\varepsilon)^{k+1}}\,\,\text{ and }\,\,\beta^{(k)}=-\alpha^{(k)}.
\end{equation}

The reader may wonder why, in writing $F^{(k)}$, $J_k$, and $K_k$, we have suppressed the arguments $u^{(j)}$ and $m^{(j)}$ for $j < k$.
This is because we will be arguing inductively as follows: to prove that $u^{(k)}$ and $m^{(k)}$ exist, we may assume that $u^{(j)}$ and $m^{(j)}$ are known functions for $j < k$.

We now state our main result.

\begin{theorem}\label{thm:ExistenceUniquenessEtc_un/mn}
	There exists $\varepsilon_0 > 0$ small enough  (see Equation \eqref{Assumption:epsilon_small}) such that for each $k\in\bb{N}$ and all $\varepsilon \in [0,\varepsilon_0]$, System \eqref{un-mnsystem} has a unique classical solution $(u^{(k)}, m^{(k)})$, which satisfies the identity
	\begin{equation}
		(u^{(k)}, m^{(k)})=\Big(\pd[k]{u}{\varepsilon}, \pd[k]{m}{\varepsilon}\Big)
	\end{equation}
 	where $(u,m)$ is the solution to \eqref{u-msystem}.
	That is, the formal differentiation carried out on $u$ and $m$ to obtain System \eqref{un-mnsystem} is justified.
	Moreover, there exists a constant $C_k > 0$ such that for $\varepsilon\in[0,\varepsilon_0]$,
	\begin{equation} \label{eq:Taylor estimate}
		\begin{array}{c}
			\enVert[1]{u(\cdot,\cdot;\varepsilon) - \sum_{j=0}^k \frac{\varepsilon^j}{j!}u^{(j)}(\cdot,\cdot;0)}_{\s{C}^{2+\alpha,1+\alpha/2}}\leq C_k\varepsilon^{k+1},
			\vspace{12pt}\\
			\enVert[1]{m(\cdot,\cdot;\varepsilon) - \sum_{j=0}^k \frac{\varepsilon^j}{j!}m^{(j)}(\cdot,\cdot;0)}_{\s{C}^{2+\alpha,1+\alpha/2}}\leq C_k\varepsilon^{k+1},
		\end{array}
	\end{equation}
	where $\alpha$ is sufficiently small.
\end{theorem}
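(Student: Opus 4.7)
The proof is by induction on $k \in \bb{N}$. The base case $k=0$ is the (known) existence and uniqueness of a classical solution $(u,m)$ to the nonlinear system \eqref{u-msystem} in $\s{C}^{2+\alpha,1+\alpha/2}$, cited at the beginning of the introduction. Assume now that, for every $j < k$, the functions $u^{(j)}, m^{(j)}$ have been constructed, agree with the genuine partial derivatives $\pd[j]{u}{\varepsilon}, \pd[j]{m}{\varepsilon}$, and satisfy uniform $\s{C}^{2+\alpha,1+\alpha/2}$-bounds on $[0,\varepsilon_0]$. The dependence on $(u^{(k)}, m^{(k)})$ inside \eqref{def:Jk}, \eqref{def:Kk}, \eqref{def:Fk} only comes from the extreme indices $j = 0$ and $j = k$ in the Leibniz sums; collecting these contributions on the left-hand side of \eqref{un-mnsystem} brings the $(u^{(k)}, m^{(k)})$ equations into exactly the abstract linear forward-backward form \eqref{abstract_system}, where the source terms $\Psi$ and $\Phi$ are explicit combinations of the already-constructed $u^{(j)}, m^{(j)}, F^{(j)}$ for $j < k$. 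By the inductive hypothesis, $\Psi$ and $\Phi$ lie in the relevant H\"older spaces with norms bounded uniformly in $\varepsilon\in[0,\varepsilon_0]$.

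Under Assumption~\ref{Assumption:epsilon_small}, the a priori estimates of Section~\ref{sec:a_priori} apply to the resulting instance of \eqref{abstract_system}, and the Leray--Schauder theorem of Section~\ref{sec:L-S_Theorem} then delivers a unique classical solution $(u^{(k)}, m^{(k)})$ with norm controlled uniformly in $\varepsilon \in [0, \varepsilon_0]$. To identify this candidate with $\pd[k]{u}{\varepsilon}, \pd[k]{m}{\varepsilon}$, I would run a difference-quotient argument: for small $h$, the pair
\begin{equation*}
\bigl( h^{-1}(u^{(k-1)}(\cdot;\varepsilon+h) - u^{(k-1)}(\cdot;\varepsilon)),\ h^{-1}(m^{(k-1)}(\cdot;\varepsilon+h) - m^{(k-1)}(\cdot;\varepsilon)) \bigr)
\end{equation*}
solves a linear forward-backward system whose data and coefficients depend continuously on $h$ and converge as $h\to 0$ to those of \eqref{un-mnsystem} at order $k$. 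The uniform estimates give pre-compactness, and uniqueness for the limiting system (itself a special case of the theory for \eqref{abstract_system}) forces the limit to coincide with $(u^{(k)}, m^{(k)})$. This shows $u^{(k-1)}$ is differentiable in $\varepsilon$ with derivative $u^{(k)}$, completing the induction.

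The Taylor estimate \eqref{eq:Taylor estimate} then follows by invoking the induction one step further: once $(u^{(k+1)}, m^{(k+1)})$ is known to exist and obeys a uniform $\s{C}^{2+\alpha,1+\alpha/2}$-bound on $[0,\varepsilon_0]$, the integral form of Taylor's theorem
\begin{equation*}
u(x,t;\varepsilon) - \sum_{j=0}^k \frac{\varepsilon^j}{j!} u^{(j)}(x,t;0) = \frac{1}{k!}\int_0^\varepsilon (\varepsilon - s)^k u^{(k+1)}(x,t;s) \dif s,
\end{equation*}
together with its analogue for $m$, immediately yields an $O(\varepsilon^{k+1})$ bound in $\s{C}^{2+\alpha,1+\alpha/2}$. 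The central difficulty is therefore concentrated in Sections~\ref{sec:a_priori}--\ref{sec:L-S_Theorem}: one must close the a priori estimates for \eqref{abstract_system} uniformly in $\varepsilon$, and the nonlocal coupling $G$ only becomes small through the factor $\beta(\varepsilon\xi(t))$, which forces the smallness restriction $\varepsilon\in[0,\varepsilon_0]$. Combining this smallness with the Dirichlet boundary condition is what makes the energy estimate, the estimate on $\int_0^T \bigl(\int_0^\infty \mu\dif x\bigr)^2 \dif t$, and the dual H\"older estimate of Section~\ref{subsec:Duality} all genuinely necessary; propagating the induction once these are in place is essentially bookkeeping.
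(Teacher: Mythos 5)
Your proposal shares the paper's overall architecture---isolate the top-order terms to cast \eqref{un-mnsystem} into the abstract form \eqref{abstract_system}, invoke the a priori estimates of Section~\ref{sec:a_priori} and the Leray--Schauder argument of Section~\ref{sec:L-S_Theorem} under Assumption~\ref{Assumption:epsilon_small}, and close an induction on $k$---but the identification of $u^{(k)}$ with $\partial_\varepsilon^k u$ is done by a genuinely different method. The paper (Steps~2 and~3 of its proof) proceeds quantitatively: it first proves a Lipschitz bound $\enVert{u^{(j)}(\varepsilon+h)-u^{(j)}(\varepsilon)}_{\s{C}^{2+\alpha,1+\alpha/2}} = O(h)$, and then shows that $u^{(j)}(\varepsilon+h)-u^{(j)}(\varepsilon)-hu^{(j+1)}(\varepsilon)$ solves \eqref{abstract_system} with $O(h^2)$ data, giving differentiability with an explicit remainder estimate and making the Taylor bound \eqref{eq:Taylor estimate} almost immediate. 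You instead run a soft compactness/uniqueness argument on the difference quotient: uniform bounds from the a priori estimates (which indeed yield the Lipschitz control of the paper's Step~2 as a by-product) give pre-compactness, and uniqueness of the limiting linear system from Theorem~\ref{thm:existence_result_w/mu} identifies the limit. This is cleaner conceptually and avoids spelling out the algebra of the second-order defect (which is, as you say, bookkeeping), but it produces no rate, so you must separately invoke the integral form of Taylor's theorem. That is fine, but to push the $(k+1)$st-order remainder bound into the full $\s{C}^{2+\alpha,1+\alpha/2}$ norm you also need that $s\mapsto u^{(k+1)}(\cdot,\cdot;s)$ is (at least strongly measurable and bounded, preferably continuous) as a $\s{C}^{2+\alpha,1+\alpha/2}$-valued map---this is exactly what the paper's Lipschitz step supplies, and it is the one place your inductive hypothesis, as stated, does not explicitly record continuity in $\varepsilon$ at the top order, so it is worth tracking. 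With that bookkeeping added, the two proofs are equivalent in content; the paper's is more explicit and quantitative, yours more abstract and compact.
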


\begin{comment}
Although the specific systems given in \eqref{u-msystem} and \eqref{un-mnsystem} are ultimately what interest us, the various a priori estimates we require are easier to demonstrate in a more abstract framework. 
Moreover, there is an underlying general structure that System \eqref{un-mnsystem} enjoys, which will resurface in our later proof of Theorem \ref{thm:ExistenceUniquenessEtc_un/mn} showing that the formal derivatives in $\varepsilon$ of $u$ and $m$ are the genuine derivatives in $\varepsilon$.
\end{comment}

\begin{comment}
When it is unimportant to stress the dependence of $w_x$ and $\mu$ on $G$, we may simply write $G(\varepsilon)$ or even simpler $G$, provided the dependence of $\varepsilon$ on $G$ is not overtly relevant.
\end{comment}

\section{A priori estimates}\label{sec:a_priori}

In this section we present our main mathematical contribution by proving new a priori estimates for a coupled forward-backward system of linear partial differential equations \eqref{abstract_system}.
It turns out that a useful first step for such systems is to prove ``energy estimates," which are derived by expanding $\od{}{t} \ip{w}{\mu}$ and using the duality between equations (i) and (ii).
However, because of the integral term $G$ appearing in \eqref{abstract_system}, the energy estimates are not useful without certain a priori estimates on the quantity $\int_0^T \del{\mu(x,t)\dif x}^2 \dif t$, where $\mu$ is the solution to  the Fokker-Planck type equation \eqref{abstract_system}(ii).
We derive these estimates first in Section \ref{sec:a priori F-P}, which may have independent interest to the reader interested in Fokker-Planck equations with a source.
Section \ref{sec:energy} then provides the desired energy estimates, which we apply throughout the rest of the section.
In section \ref{subsec:Duality} we arrive at further estimates for the Fokker-Planck equation in the dual to a H\"older space.
Finally, in Section \ref{sec:full regularity} we give a priori estimates that establish full parabolic regularity.

\subsection{A priori estimates on Fokker-Planck equations with a source}

\label{sec:a priori F-P}

In this subsection, we collect some estimates on the Fokker-Planck equation \eqref{abstract_system}(ii), in particular with respect to the $L^1$ norm and first moment in the space variable.
These results have very little to do with the particular structure of the coupled system \eqref{abstract_system} and can be stated abstractly for a Fokker-Planck equation with a source.
The proofs of these results are given in Appendix \ref{ap:a priori}.

\begin{proposition}\label{prop:LtInftyLx1_moment_mu}
	Let $\mu$ solve
	\begin{equation}\label{F-P_equation_mu}
		\begin{cases}
			\begin{array}{lll}
				(i)&\displaystyle \mu_t-\frac{\sigma^2}{2}\mu_{xx}-(b\mu)_x=\nu_x, &0\leq x< \infty,\,\,0\leq t\leq T\\
				(ii)&\displaystyle \mu(x,0)=\mu_0(x), &0\leq x< \infty\\
				(iii)&\displaystyle \mu(0,t)=0, &0\leq t\leq T,
			\end{array}
		\end{cases}
	\end{equation}
	where $\mu_0 \in L^1(\intco{0,\infty})$, $b\in L^\infty([0,\infty)\times[0,T])$ and $\nu\in L_t^\infty\big([0,T]; L_x^1\big([0,\infty)\big)\big)$ are known functions.
	Additionally, assume $x\mu_0 \in L^1(\intco{0,\infty})$ and $x\nu\in L_t^\infty\big([0,T]; L_x^1\big([0,\infty)\big)\big)$.
	Then
	\begin{enumerate}
		\item[(a)] $\mu\in L_t^\infty\big([0,T]; L_x^1\big([0,\infty)\big)\big)$ with
		$$
		\enVert{\mu}_{L_t^\infty(L_x^1)}\leq C\del{\enVert{\mu_0}_{L^1} +  \enVert{\nu}_{L_t^\infty(L_x^1)}},
		$$
		where the constant $C\in(0,\infty)$ depends only on $\sigma$, $\enVert{b}_\infty$, and $T$;
		\item[(b)] and
		$$
		\displaystyle \sup_{0\leq \tau\leq T}\int_0^\infty x|\mu(x,\tau)|\dif x\leq C\big(\enVert{(1+x)\mu_0}_{L^1} + \enVert{\nu}_{L_t^\infty(L_x^1)}+\enVert{x\nu}_{L_t^\infty(L_x^1)}\big),
		$$
		where the constant $C\in(0,\infty)$ depends only on $\sigma$ and $\enVert{b}_{\infty}$, and $T$.
	\end{enumerate}
\end{proposition}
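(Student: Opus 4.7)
The starting point is the same Duhamel representation used in the proof of Proposition~\ref{prop:L2timeL1space_mu}, now specialized to zero initial data. Since $\mu(\cdot,0)\equiv 0$ and the Dirichlet boundary data at $x=0$ are absorbed into the reflected heat kernel $G(x,y;t)=S(x-y,t)-S(x+y,t)$, the solution admits the representation
\begin{equation}
\mu(x,t) = -\int_0^t\int_0^\infty\left(\dpd{S}{x}(x-y,t-s)+\dpd{S}{x}(x+y,t-s)\right)\bigl(b(y,s)\mu(y,s)-\nu(y,s)\bigr)\dif y\dif s,
\end{equation}
with $S$ as in \eqref{heat_kernel}. All subsequent estimates come from integrating this identity in $x$, possibly weighted by $x$, and exploiting explicit moments of the Gaussian.

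For part (a), set $f(t):=\int_0^\infty|\mu(x,t)|\dif x$. Integrating the absolute value of the Duhamel identity in $x$ and using the bound $\int_{-\infty}^\infty|\partial_x S(x,\tau)|\dif x=2(2\sigma^2\pi\tau)^{-1/2}$ already established in \eqref{eq:heat ker prop1}, one obtains the weakly singular Volterra inequality
\begin{equation}
f(t)\leq c\int_0^t(t-s)^{-1/2}\bigl(\enVert{b}_\infty f(s)+\enVert{\nu}_{L_t^\infty L_x^1}\bigr)\dif s,
\end{equation}
with $c$ as in \eqref{def:little_c}. A generalized Gr\"onwall inequality for kernels with integrable singularity (Henry-type) then yields $\sup_{t\in[0,T]}f(t)\leq C\enVert{\nu}_{L_t^\infty L_x^1}$, giving (a).

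For part (b), let $F(t):=\int_0^\infty x|\mu(x,t)|\dif x$. The key ingredient is the moment estimate
\begin{equation}
\int_0^\infty x\left|\dpd{S}{x}(x\pm y,\tau)\right|\dif x\leq 1+\frac{y}{\sigma}\sqrt{\frac{2}{\pi\tau}},
\end{equation}
which follows from $\partial_x S(z,\tau)=-\frac{z}{\sigma^2\tau}S(z,\tau)$ together with the explicit formulas $\int z^2 S\dif z=\sigma^2\tau$ and $\int|z|S\dif z=\sigma\sqrt{2\tau/\pi}$. Multiplying the Duhamel identity by $x$, integrating, and separating the two pieces of the above kernel bound gives
\begin{equation}
F(t)\leq 2T\bigl(\enVert{b}_\infty\enVert{\mu}_{L_t^\infty L_x^1}+\enVert{\nu}_{L_t^\infty L_x^1}\bigr)+C_\sigma\int_0^t(t-s)^{-1/2}\bigl(\enVert{b}_\infty F(s)+\enVert{x\nu}_{L_t^\infty L_x^1}\bigr)\dif s.
\end{equation}
Bounding the zeroth-moment piece by part (a) reduces this to a weakly singular Volterra inequality of the same shape as in (a), and a second application of the Henry-type Gr\"onwall lemma closes the estimate, giving the first-moment bound in terms of $\enVert{\nu}_{L_t^\infty L_x^1}+\enVert{x\nu}_{L_t^\infty L_x^1}$.

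\textbf{Main obstacle.} There is no single hard step, but the principal technical point is the treatment of the weakly singular kernel $(t-s)^{-1/2}$: it produces self-referential Volterra inequalities that must be closed by a generalized Gr\"onwall argument rather than by a direct comparison. In part (b) this is compounded by the kernel factor $y/(t-s)^{1/2}$, which mixes the zeroth and first moments of $\mu$ and thus forces part (a) to be used \emph{before} the Gr\"onwall step can be applied to the first moment; tracking the dependence on $\sigma$ and $\enVert{b}_\infty$ through these successive applications is the main bookkeeping difficulty.
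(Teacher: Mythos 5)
Your proof is correct and follows the same skeleton as the paper's: the reflected-kernel Duhamel representation, the Gaussian moment identities for $\partial_x S$, and a weakly singular Volterra inequality for the zeroth and first moments of $\mu$. The genuine difference is how you close those Volterra inequalities. The paper does not invoke a Henry-type Gr\"onwall lemma; instead it multiplies by an exponential weight $e^{-Mt}$, defines $B:=\sup_t e^{-Mt}\int_0^\infty|\mu|\dd x$, uses $\int_0^t e^{-M(t-s)}(t-s)^{-1/2}\dd s\le \sqrt{\pi}\,M^{-1/2}$ to extract the factor $M^{-1/2}$, and then chooses $M$ large enough (depending on $\sigma$ and $\|b\|_\infty$) to absorb the self-referential term. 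This is, in effect, a self-contained proof of the very comparison lemma you are citing, so the two routes are morally equivalent; yours is more modular and shorter if the Henry inequality is granted, while the paper's is elementary and makes the dependence of the constants on $\sigma$, $\|b\|_\infty$, and $T$ explicit. Two small remarks: (1) your Duhamel formula carries a spurious relative sign on the $b\mu$ term (the correct representation, after integrating by parts, is $\mu=\int\!\!\int\big(S_x(x-y,\cdot)+S_x(x+y,\cdot)\big)\big(b\mu+\nu\big)$); since you only use it inside absolute values, this is harmless, but worth fixing. (2) In part (b) the same exponential-weight device lets the paper absorb the zeroth-moment contribution directly rather than via the crude $2T\|b\|_\infty\|\mu\|_{L^\infty_t L^1_x}$ bound, but your version still closes because part (a) has already been established.
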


\begin{proof}
	See Appendix \ref{ap:a priori F-P}.
\end{proof}

The following proposition could be stated for an abstract Fokker-Planck equation like \eqref{F-P_equation_mu}; nevertheless, it is given in the form below to make it more obvious how it may be applied later on.
\begin{proposition}\label{prop:L2timeL1space_mu}
	Define the constant
	\begin{equation} \label{eq:C0}
		C_0:=384c'\ln(2)\big(1+\enVert[1]{u_x^{(0)}}_\infty^2\big)^3,
	\end{equation}
	where $c'$ is as in \eqref{c'_constant}, and suppose $\mu$ satisfies $(ii)$ of System \eqref{abstract_system} with $\Phi\in L_t^\infty(L_x^1)$. If $\lambda> C_0$, then
	\begin{equation}\label{L2timeLxLambdaBig}
		\int_0^Te^{-\lambda t}\Bigg(\int_0^\infty|\mu|\dif x\Bigg)^2\dif t\leq \frac{C_1\enVert[1]{\Phi}_{L_t^\infty(L_x^1)}^2}{C_0}+C_2\int_0^T e^{-\lambda t}\int_0^\infty w_x^2m^{(0)}\dif x\dif t,
	\end{equation}
	where the constants $C_1,C_2\in(0,\infty)$ depend only on $\enVert[1]{u_x^{(0)}}_{\infty}$.
	
	In particular, if $T<\infty$, then there is no restriction on $\lambda$, and for $\lambda\in(0,C_0]$,
	\begin{equation}\label{L2timeLxLambdaSmall}
		\int_0^Te^{-\lambda t}\Bigg(\int_0^\infty|\mu|\dif x\Bigg)^2\dif t\leq\frac{C_1\enVert[1]{\Phi}_{L_t^\infty(L_x^1)}^2e^{2C_0T}}{C_0}+C_2e^{2C_0T}\int_0^T e^{-\lambda t}\int_0^\infty w_x^2m^{(0)}\dif x\dif t,
	\end{equation}
	with $C_1$ and $C_2$ as in \eqref{L2timeLxLambdaBig}.
\end{proposition}

\begin{proof}
	See Appendix \ref{ap:a priori F-P}.
\end{proof}

\subsection{Energy estimates}

\label{sec:energy}

\begin{comment}
Proposition \ref{prop:L2timeL1space_mu} gives a necessary estimate that leads to the following energy type estimate for System \eqref{abstract_system}.
As we will later see, the following estimate is crucial to eventually demonstrating H\"{o}lder regularity for both $w$ and $\mu$.
\end{comment}

By \emph{energy estimates} we mean specifically an estimate on the quantity $\int_0^T e^{-rt} \int_0^\infty \abs{w_x}^2 m^{(0)} \dif x \dif t$.
Before stating our result, we first define an upper bound on the parameter $\varepsilon$ that we will need in the proof.
With $\beta$ as in \eqref{def:a_and_b} and $C_0$ as in \eqref{eq:C0}, let $\varepsilon_0>0$ be such that
	\begin{equation}\label{Assumption:epsilon_small}
		\beta(\varepsilon_0)<\frac{1+3\enVert[1]{u_x^{(0)}}_\infty^2}{96e^{2C_0T}\Big(\enVert[1]{F^{(0)}}_\infty^2+\dfrac{3}{4}\enVert[1]{u_x^{(0)}}_\infty^2\Big)}\quad\text{ and }\quad \beta(\varepsilon_0)<\frac{1}{10}.
	\end{equation}
Note that such a $\varepsilon_0$ is possible to obtain, because $\lim_{\varepsilon\to0^+}\beta(\varepsilon)=0$. The necessity of this assumption is a consequence of the method of proof for the energy estimate. Specifically, the upper bounds are motivated by \eqref{beta_delta_small}, \eqref{energy_estimate:penultimateStep}, and the value of $C_2$, the constant appearing in Proposition~\ref{prop:L2timeL1space_mu}---see the proof of Proposition~\ref{prop:L2timeL1space_mu} for an explicit value.

\begin{proposition}\label{prop:EnergyEstimate_w}
	Suppose $(w,\mu)$ satisfies \eqref{abstract_system} with $\Psi\in L^\infty$ and $\Phi\in L_t^\infty(L_x^1)$. Let $\varepsilon_0$ satisfy \eqref{Assumption:epsilon_small}. Then for $\varepsilon\in[0,\varepsilon_0]$ the following energy estimate is valid
	\begin{equation}
		\int_0^Te^{-rt}\int_0^\infty |w_x|^2m^{(0)}\dif x\dif t\leq C_1\enVert[1]{\Phi}_{L_t^\infty(L_x^1)}\enVert[1]{w_x}_{\infty}+C_2\Big(\enVert[1]{\Phi}_{L_t^\infty(L_x^1)}^2+\enVert[1]{\Psi}_{\infty}\Big),
	\end{equation}
	where $C_1\in(0,\infty)$ is a constant that depends only on $r$; and $C_2\in(0,\infty)$ is a constant that depends only on $\enVert[1]{u_x^{(0)}}_{\infty}$, $\sigma$, $r$, and $T$.
	As an immediate corollary,
\begin{equation}
	\int_0^T\int_0^\infty |w_x|^2m^{(0)}\dif x\dif t\leq C_1'\enVert[1]{\Phi}_{L_t^\infty(L_x^1)}\enVert[1]{w_x}_{\infty}+C_2'\Big(\enVert[1]{\Phi}_{L_t^\infty(L_x^1)}^2+\enVert[1]{\Psi}_{\infty}\Big),
\end{equation}
with $C_1'=e^{rT}C_1$ and $C_2'=e^{rT}C_2$.
\end{proposition}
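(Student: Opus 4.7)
The plan is to follow the duality-based energy approach sketched in the introduction: pair equations $(i)$ and $(ii)$ from \eqref{abstract_system} against $\mu$ and $w$ respectively, exploit the built-in cancellations, and then close the resulting inequality using Proposition~\ref{prop:L2timeL1space_mu} together with the smallness of $\beta(\varepsilon_0)$.

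First, I would compute $\frac{d}{dt}\ip{w}{\mu}=\ip{w_t}{\mu}+\ip{w}{\mu_t}$, substituting for $w_t$ from $(i)$ and for $\mu_t$ from $(ii)$. Two integrations by parts, using the Dirichlet conditions $w(0,t)=\mu(0,t)=0$ and decay at $x=\infty$, make the $\frac{\sigma^2}{2}w_{xx}$ and $\frac{\sigma^2}{2}\mu_{xx}$ contributions cancel against each other. The transport pairing $\ip{w}{(F^{(0)}\mu)_x}=-\ip{w_x}{F^{(0)}\mu}$ then cancels exactly with $\ip{F^{(0)}w_x}{\mu}$ arising from $(i)$. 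Since $G(t)$ is independent of $x$, the remaining pieces rearrange to the clean identity
\begin{equation*}
\tfrac{1}{2}\int_0^\infty w_x^2 m^{(0)}\dif x = \tfrac{d}{dt}\ip{w}{\mu}-r\ip{w}{\mu}+\ip{\Psi}{\mu}+G(t)\ip{F^{(0)}}{\mu}+\ip{w_x}{\Phi}+\tfrac{G(t)}{2}\int_0^\infty w_x m^{(0)}\dif x.
\end{equation*}

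Next I would multiply by $e^{-rt}$ and integrate on $[0,T]$, exploiting $e^{-rt}(\partial_t-r)\ip{w}{\mu}=\partial_t(e^{-rt}\ip{w}{\mu})$ so the first two terms integrate to $e^{-rT}\ip{w(T)}{\mu(T)}-\ip{w(0)}{\mu(0)}=0$ by $(iii)$. Writing $E:=\int_0^T e^{-rt}\int_0^\infty w_x^2 m^{(0)}\dif x\dif t$, $A(t):=\enVert{\mu(\cdot,t)}_{L^1_x}$, and $B(t):=\left(\int_0^\infty w_x^2 m^{(0)}\dif x\right)^{1/2}$, this yields an identity for $E/2$ as a sum of four time integrals. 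The $\ip{w_x}{\Phi}$ term is bounded directly by $\enVert{w_x}_\infty\enVert{\Phi}_{L^\infty_t(L^1_x)}/r$, producing the $C_1\enVert{\Phi}_{L^\infty_t(L^1_x)}\enVert{w_x}_\infty$ contribution. For the two $G$-terms I apply Cauchy--Schwarz to the definition \eqref{def:G} (using $\int m^{(0)}\dif x\leq 1$) to obtain $\envert{G(t)}\leq \beta(\varepsilon_0)\bigl(\enVert{u_x^{(0)}}_\infty A(t)+B(t)\bigr)$; combined with Young's inequality on the cross products $AB$, these two terms reduce to a linear combination of $\int_0^T e^{-rt}A^2\dif t$ and $E$ whose coefficients are proportional to $\beta(\varepsilon_0)$. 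The $\ip{\Psi}{\mu}$ term is controlled by $\enVert{\Psi}_\infty\int_0^T e^{-rt}A\dif t$, and Cauchy--Schwarz in time gives $\int_0^T e^{-rt}A\dif t\leq r^{-1/2}\bigl(\int_0^T e^{-rt}A^2\dif t\bigr)^{1/2}$.

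The main obstacle is closing the estimate, since both $\int_0^T e^{-rt}A^2\dif t$ and $E$ persist on the right-hand side. This is precisely the role of Proposition~\ref{prop:L2timeL1space_mu}: applying \eqref{L2timeLxLambdaBig} if $r>C_0$, or \eqref{L2timeLxLambdaSmall} if $r\leq C_0$, replaces $\int_0^T e^{-rt}A^2\dif t$ by $\tilde C_1\enVert{\Phi}^2_{L^\infty_t(L^1_x)} + \tilde C_2 E$. After substitution, the total coefficient of $E$ on the right is exactly $\beta(\varepsilon_0)$ times a constant involving $\enVert{F^{(0)}}_\infty$, $\enVert{u_x^{(0)}}_\infty$, and the factor $e^{2C_0T}$ inside $\tilde C_2$, which is precisely the combination that Assumption~\ref{Assumption:epsilon_small} is engineered to force strictly below $1/2$. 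Absorbing the residual $E$ into the left-hand side, and applying Young's inequality to the surviving $\enVert{\Psi}_\infty\bigl(\tilde C_1\enVert{\Phi}^2_{L^\infty_t(L^1_x)}+\tilde C_2 E\bigr)^{1/2}$ factor from the $\Psi$-term, yields the stated bound; the unweighted corollary follows from $e^{-rt}\geq e^{-rT}$ on $[0,T]$.
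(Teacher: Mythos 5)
Your proposal is the same argument as the paper's: you derive the identical duality identity by pairing equation $(i)$ against $\mu$ and $(ii)$ against $w$, multiply by $e^{-rt}$ and use the data $(iii)$ to kill the total-derivative term, bound $\envert{G(t)}\leq\beta(\varepsilon_0)\bigl(\enVert{u_x^{(0)}}_\infty A(t)+B(t)\bigr)$ exactly as the paper does, invoke Proposition~\ref{prop:L2timeL1space_mu} to trade $\int_0^T e^{-rt}A^2\,\dif t$ for $\tilde C_1\enVert{\Phi}_{L_t^\infty(L_x^1)}^2+\tilde C_2 E$, and close by absorption via Assumption~\ref{Assumption:epsilon_small}. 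The one genuine difference is where you treat the $\Psi$-term: you Cauchy--Schwarz in time first and apply Young's inequality after substituting Proposition~\ref{prop:L2timeL1space_mu}, whereas the paper applies a $\delta$-parameterized Young inequality to $\enVert{\Psi}_\infty\int_0^\infty\envert{\mu}\,\dif x$ before invoking the $L^2_t L^1_x$ estimate and then tunes $\delta$ against $\enVert{\Psi}_\infty$. With your ordering, the claim that ``the total coefficient of $E$ on the right is exactly $\beta(\varepsilon_0)$ times a constant'' is slightly off: the $\Psi$-term, after its own Young split, deposits a further $E$-piece whose coefficient is governed by the Young parameter rather than by $\beta(\varepsilon_0)$, so a second small parameter must be chosen, independently of Assumption~\ref{Assumption:epsilon_small}. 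You do this implicitly in your closing sentence, so the argument closes; however, carried out with a fixed small Young parameter your version produces $C_2\enVert{\Psi}_\infty^2$ where the Proposition writes $C_2\enVert{\Psi}_\infty$. A homogeneity check (set $\Phi\equiv 0$, scale $\Psi\mapsto s\Psi$; by linearity of System~\eqref{abstract_system}, $(w,\mu)\mapsto(sw,s\mu)$, so the left side scales like $s^2$) shows the quadratic dependence on $\enVert{\Psi}_\infty$ is in fact the correct homogeneity, so your variant is the internally consistent one; the paper's linear power is an artifact of the $\delta$-versus-$\enVert{\Psi}_\infty$ bookkeeping and does not affect any downstream use of the estimate.
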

\begin{proof}
	We begin by observing
	\begin{align}\label{ddt_Int_wmu1}
		\frac{\dif}{\dif t}\int_0^\infty e^{-rt}w\mu\dif x&=\int_0^\infty e^{-rt}\Big(\big(w_t-rw\big)\mu-w\mu_t\Big)\dif x
		\\[4pt]
		&=\int_0^\infty e^{-rt}\Bigg\{\mu\Big(\frac{-\sigma^2}{2}w_{xx}-F^{(0)}(G-w_x)-\Psi\Big)\nonumber
		\\[4pt]
		&\quad+w\Big(\frac{\sigma^2}{2}\mu_{xx}+\big[F^{(0)}\mu\big]_x+\Big[\Phi+\frac{1}{2}(G-w_x)m^{(0)}\Big]_x\Big)\Bigg\}\dif x\nonumber
		\\[4pt]
		&=-e^{-rt}\int_0^\infty\Big(F^{(0)}G\mu+\Psi\mu+\Phi w_x+\frac{1}{2}w_x(G-w_x)m^{(0)}\Big)\dif x.\nonumber
	\end{align}
	The first equality follows via differentiation under the integral sign, the second equality follows from substituting in the equations for $w_t-rw$ and $\mu_t$ given by System \eqref{abstract_system}, and the last equality follows via integration by parts.
	Next, unpacking $G$, \eqref{ddt_Int_wmu1} implies
	\begin{align}\label{ddt_Int_wmu2}
		-\frac{\dif}{\dif t}\int_0^\infty e^{-rt}w\mu\dif x&=e^{-rt}\beta(\varepsilon\xi)\Bigg(\int_0^\infty F^{(0)}\mu\dif x\Bigg)\Bigg(\int_0^\infty\big(w_xm^{(0)}+u_x^{(0)}\mu\big)\dif x\Bigg)
		\\[4pt]
		&\quad+e^{-rt}\int_0^\infty\big(\Psi\mu+\Phi w_x\big)\dif x-\frac{1}{2}e^{-rt}\int_0^\infty w_x^2m^{(0)}\dif x\nonumber
		\\[4pt]
		&\quad+\frac{1}{2}e^{-rt}\beta(\varepsilon\xi)\Bigg(\int_0^\infty w_xm^{(0)}\dif x\Bigg)\Bigg(\int_0^\infty\big(w_xm^{(0)}+u_x^{(0)}\mu\big)\dif x\Bigg).\nonumber
	\end{align}
	We then have the following three estimates. First,
	\begin{align}
		\Bigg(\int_0^\infty F^{(0)}\mu&\dif x\Bigg)\Bigg(\int_0^\infty\big(w_xm^{(0)}+u_x^{(0)}\mu\big)\dif x\Bigg)
		\\[4pt]
		&=\Bigg(\int_0^\infty F^{(0)}\mu\dif x\Bigg)\Bigg(\int_0^\infty w_xm^{(0)}\dif x\Bigg)+\Bigg(\int_0^\infty F^{(0)}\mu\dif x\Bigg)\Bigg(\int_0^\infty u_x^{(0)}\mu\dif x\Bigg)\nonumber
		\\[4pt]
		&\leq\Bigg(\int_0^\infty F^{(0)}\mu\dif x\Bigg)^2+\frac{1}{2}\Bigg(\int_0^\infty w_xm^{(0)}\dif x\Bigg)^2+\frac{1}{2}\Bigg(\int_0^\infty u_x^{(0)}\mu\dif x\Bigg)^2\nonumber
		\\[4pt]
		&\leq\Big(\enVert[1]{F^{(0)}}_{\infty}^2+\frac{1}{2}\enVert[1]{u_x^{(0)}}_{\infty}^2\Big)\Bigg(\int_0^\infty |\mu|\dif x\Bigg)^2+\frac{1}{2}\int_0^\infty w_x^2m^{(0)}\dif x\nonumber,
	\end{align}
	where the first inequality follows by two applications of Young's inequality, and the last inequality via the Cauchy-Schwarz inequality.
	Second, let $\delta>0$ to be chosen later. Then
	\begin{align}
		\int_0^\infty\big(\Psi\mu+\Phi w_x\big)\dif x&\leq\enVert[1]{\Psi}_{\infty}\int_0^\infty|\mu|\dif x+\enVert[1]{\Phi}_{L_x^1}\enVert{w_x}_{\infty}
		\\[4pt]
		&\leq\frac{1}{2\delta}+\frac{\delta}{2}\enVert[1]{\Psi}_{\infty}^2\Bigg(\int_0^\infty|\mu|\dif x\Bigg)^2+\enVert[1]{\Phi}_{L_x^1}\enVert{w_x}_{\infty}.\nonumber
	\end{align}
	The last inequality above follows from a generalized Young's inequality.
	Third and last,
	\begin{align}
		\Bigg(\int_0^\infty w_xm^{(0)}&\dif x\Bigg)\Bigg(\int_0^\infty\big(w_xm^{(0)}+u_x^{(0)}\mu\big)\dif x\Bigg)
		\\[4pt]
		&=\Bigg(\int_0^\infty w_xm^{(0)}\dif x\Bigg)^2+\Bigg(\int_0^\infty w_xm^{(0)}\dif x\Bigg)\Bigg(\int_0^\infty u_x^{(0)}\mu\dif x\Bigg)\nonumber
		\\[4pt]
		&\leq\int_0^\infty w_x^2m^{(0)}\dif x+\frac{1}{2}\Bigg(\int_0^\infty w_xm^{(0)}\dif x\Bigg)^2+\frac{1}{2}\Bigg(\int_0^\infty u_x^{(0)}\mu\dif x\Bigg)^2\nonumber
		\\[4pt]
		&\leq\frac{3}{2}\int_0^\infty w_x^2m^{(0)}\dif x+\frac{1}{2}\enVert[1]{u_x^{(0)}}_{\infty}^2\Bigg(\int_0^\infty|\mu|\dif x\Bigg)^2.\nonumber
	\end{align}
	The first inequality follows via the Cauchy-Schwarz inequality and Young's inequality.
	The last inequality follows via another application of the Cauchy-Schwarz inequality.
	Together with \eqref{ddt_Int_wmu2}, the above three estimates imply
	\begin{align}\label{ddt_Int_wmu3}
			-\frac{\dif}{\dif t}\int_0^\infty e^{-rt}w\mu\dif x&\leq e^{-rt}\Bigg(\beta(\varepsilon\xi)\Big(\enVert[1]{F^{(0)}}_{\infty}^2+\frac{3}{4}\enVert[1]{u_x^{(0)}}_{\infty}^2\Big)+\frac{\delta}{2}\enVert[1]{\Psi}_{\infty}\Bigg)\Bigg(\int_0^\infty|\mu|\dif x\Bigg)^2
			\\[4pt]
			&\quad+\frac{1}{2}e^{-rt}\Big(\frac{5}{2}\beta(\varepsilon\xi)-1\Big)\int_0^\infty w_x^2m^{(0)}\dif x+e^{-rt}\Big(\frac{1}{2\delta}+\enVert[1]{\Phi}_{L_x^1}\enVert[1]{w_x}_{\infty}\Big).\nonumber
	\end{align}
	To make future calculations less cluttered define
	\begin{equation}
		f(t):=2\beta(\varepsilon\xi)\Big(\enVert[1]{F^{(0)}}_{\infty}^2+\frac{3}{4}\enVert[1]{u_x^{(0)}}_{\infty}^2\Big)+\delta\enVert[1]{\Psi}_{\infty}.
	\end{equation}
	Now, because $w(x,T)=\mu(x,0)=0$, integrating the left-hand side of \eqref{ddt_Int_wmu3} in time and rearranging the inequality, we obtain
	\begin{align}
		\int_0^T\Big(1-\frac{5}{2}\beta(\varepsilon\xi)\Big)e^{-rt}\int_0^\infty w_x^2m^{(0)}\dif x\dif t&\leq \int_0^Tf(t)e^{-rt}\Bigg(\int_0^\infty|\mu|\dif x\Bigg)^2\dif t
		\\[4pt]
		&\quad+\int_0^Te^{-rt}\Big(\frac{1}{\delta}+2\enVert[1]{\Phi}_{L_x^1}\enVert[1]{w_x}_{\infty}\Big)\dif t\nonumber,
	\end{align}
	Note that $\beta(\varepsilon\xi)$, and subsequently $f(t)$, converges to 0 uniformly in $t$ as $\varepsilon$ and $\delta$ go to 0.
	Let $\delta_1>0$ to be determined shortly.
	We can take $\varepsilon$ and $\delta$ small enough such that
	\begin{equation}\label{beta_delta_small}
		\beta(\varepsilon\xi)<\frac{\delta_1}{4\Big(\enVert[1]{F^{(0)}}_{\infty}^2+\displaystyle\frac{3}{4}\enVert[1]{u_x^{(0)}}_{\infty}^2\Big)}\,\,\text{ and }\,\,\delta<\frac{\delta_1}{2\enVert[1]{\Psi}_{\infty}}.
	\end{equation}
	That is, $\enVert[1]{f}_{L_t^\infty}<\delta_1$ for $\varepsilon$ and $\delta$ small enough. Consequently, 
	by Proposition~\ref{prop:L2timeL1space_mu},
	\begin{align}\label{energy_estimate:penultimateStep}
		\int_0^T\Big(1-\frac{5}{2}\beta(\varepsilon\xi)-\delta_1C_2\Big)e^{-rt}\int_0^\infty& w_x^2m^{(0)}\dif x\dif t
		\\[4pt]
		&\leq \delta_1C_1\enVert[1]{\Phi}_{L_t^\infty(L_x^1)}^2+\int_0^Te^{-rt}\Big(\frac{1}{\delta}+2\enVert[1]{\Phi}_{L_x^1}\enVert[1]{w_x}_{\infty}\Big)\dif t\nonumber,
	\end{align}
	for some constants $C_1,C_2\in(0,\infty)$ which depend only on $\enVert[1]{u_x^{(0)}}_{\infty}$, $\sigma$, and $T$.
	We now choose $\delta_1<1/(4C_2)$.
	If necessary, we may take $\varepsilon$ smaller so that $\beta(\varepsilon\xi)<1/10$ as well. This, together with \eqref{beta_delta_small}, demonstrates that by choosing $\varepsilon_0$ as in Equation \eqref{Assumption:epsilon_small}, we find for all $\varepsilon\in[0,\varepsilon_0]$ that
	\begin{equation}
		1-\frac{5}{2}\beta(\varepsilon\xi)-\delta_1C_2<\frac{1}{2}.
	\end{equation}
	Subsequently,
	\begin{equation}
		\int_0^Te^{-rt}\int_0^\infty w_x^2m^{(0)}\dif x\dif t\leq \frac{C_1\enVert[1]{\Phi}_{L_t^\infty(L_x^1)}^2}{2C_2}+2\int_0^Te^{-rt}\Big(\frac{1}{\delta}+2\enVert[1]{\Phi}_{L_x^1}\enVert[1]{w_x}_{\infty}\Big)\dif t.
	\end{equation}
	To clean things up, the restriction on $\delta_1$ implies $\delta<1/\big(8C_2\enVert{\Psi}_{\infty}\big)$, and we can  restrict $\delta$ from below so that $\delta>1/\big(16C_2\enVert{\Psi}_{\infty}\big)$.
	As such, we find that
	\begin{equation}
		\int_0^Te^{-rt}\int_0^\infty w_x^2m^{(0)}\dif x\dif t\leq\frac{C_1\enVert[1]{\Phi}_{L_t^\infty(L_x^1)}^2}{2C_2}+32C_2\enVert[1]{\Psi}_{\infty}r^{-1}+4r^{-1}\enVert{\Phi}_{L_t^\infty(L_x^1)}\enVert{w_x}_{\infty},
	\end{equation}
	after estimating $\int_0^Te^{-rt}\dif t\leq r^{-1}$.
	This yields the desired result.
\end{proof}

As a consequence of the above energy estimate, we find that the maximum of $w$ is controlled partially by the square root of the maximum of $w_x$.

\begin{corollary}\label{cor:max_w}
	Suppose $(w,\mu)$ satisfies System \eqref{abstract_system} with $\Psi\in L^\infty$ and $\Phi\in L_t^\infty(L_x^1)$.
	Let $\varepsilon_0$ satisfy \eqref{Assumption:epsilon_small}. Then for $\varepsilon\in[0,\varepsilon_0]$,
	\begin{equation}
		|w(x,t)|\leq C_1\kappa\big(1+\enVert[1]{w_x}_{\infty}\big)^{1/2}+C_2\enVert{\Psi}_{\infty}\,\,\text{ for all }\,\, x,t\in[0,\infty)\times[0,T],
	\end{equation}
	where $C_1\in(0,\infty)$ is a constant depending only on $\enVert[1]{u_x^{(0)}}_{\infty}$, $\sigma$, $r$, and $T$; $C_2\in(0,\infty)$ is a constant depending only on $r$, and $T$; and $\kappa\in[0,\infty)$ is a constant depending solely on $\enVert[1]{\Phi}_{L_t^\infty(L_x^1)}$ and $\enVert[1]{\Psi}_{\infty}$ in such a way that $\kappa=0$ whenever $\enVert[1]{\Phi}_{L_t^\infty(L_x^1)}=\enVert[1]{\Psi}_{\infty}=0$.
\end{corollary}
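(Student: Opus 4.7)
The plan is to derive a pointwise representation of $w$ by duality with a Fokker--Planck fundamental solution, then bound the resulting integrals using Propositions~\ref{prop:L2timeL1space_mu} and \ref{prop:EnergyEstimate_w}.

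Fix $(x_0,t_0)\in[0,\infty)\times[0,T]$ and let $\hat m=\hat m(\cdot,\cdot;x_0,t_0)$ denote the fundamental solution on $(0,\infty)\times(t_0,T]$ of
\begin{equation*}
\hat m_t-\frac{\sigma^2}{2}\hat m_{xx}-\bigl(F^{(0)}\hat m\bigr)_x=0,\qquad \hat m(\cdot,t_0)=\delta_{x_0},\qquad \hat m(0,t)=0.
\end{equation*}
Because mass is absorbed at $x=0$, $\int_0^\infty \hat m(x,t)\,dx\le 1$ for every $t\ge t_0$. I would compute $\frac{d}{dt}\bigl(e^{-rt}\int_0^\infty w\hat m\,dx\bigr)$ by substituting equation (i) of \eqref{abstract_system} and the equation for $\hat m$ and integrating by parts; the boundary terms vanish thanks to $w(0,t)=\hat m(0,t)=0$, the second-order terms cancel, and so do the $F^{(0)}w_x$ terms. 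Using that $G(t)$ does not depend on $x$, what remains is
\begin{equation*}
\frac{d}{dt}\Bigl(e^{-rt}\int_0^\infty w\hat m\,dx\Bigr)=-e^{-rt}\int_0^\infty\Psi\hat m\,dx-e^{-rt}G(t)\int_0^\infty F^{(0)}\hat m\,dx.
\end{equation*}
Integrating from $t_0$ to $T$ and using $w(\cdot,T)=0$ yields the representation
\begin{equation*}
w(x_0,t_0)=e^{rt_0}\!\int_{t_0}^T\! e^{-rt}\!\int_0^\infty\Psi\hat m\,dx\,dt+e^{rt_0}\!\int_{t_0}^T\! e^{-rt}G(t)\!\int_0^\infty F^{(0)}\hat m\,dx\,dt.
\end{equation*}

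The first term is immediately bounded by $\enVert{\Psi}_\infty/r$ using $\int_0^\infty \hat m\,dx\le 1$, giving the $C_2\enVert{\Psi}_\infty$ contribution. For the second term $\enVert[1]{F^{(0)}}_\infty$ is controlled by $\enVert[1]{u_x^{(0)}}_\infty$, so it suffices to bound $\int_0^T|G(t)|\,dt$. From \eqref{def:G},
\begin{equation*}
|G(t)|\le\beta(\varepsilon_0)\Bigl(\enVert[1]{u_x^{(0)}}_\infty\int_0^\infty|\mu|\,dx+\int_0^\infty|w_x|\,m^{(0)}\,dx\Bigr),
\end{equation*}
and Cauchy--Schwarz in $t$ bounds $\int_0^T\int_0^\infty|\mu|\,dx\,dt$ and $\int_0^T\int_0^\infty|w_x|m^{(0)}\,dx\,dt$ by $T^{1/2}$ times the square roots of $\int_0^T\bigl(\int|\mu|\,dx\bigr)^2 dt$ and $\int_0^T\int w_x^2 m^{(0)}\,dx\,dt$ respectively. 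Applying Proposition~\ref{prop:L2timeL1space_mu} (with any fixed $\lambda\in(0,C_0]$, which removes the time weight at the cost of a constant depending on $T$) together with Proposition~\ref{prop:EnergyEstimate_w} bounds the sum of these two quantities by a constant multiple of
\begin{equation*}
\enVert[1]{\Phi}_{L_t^\infty(L_x^1)}^2+\enVert[1]{\Phi}_{L_t^\infty(L_x^1)}\enVert[1]{w_x}_\infty+\enVert[1]{\Psi}_\infty\le\bigl(\enVert[1]{\Phi}_{L_t^\infty(L_x^1)}^2+\enVert[1]{\Psi}_\infty\bigr)\bigl(1+\enVert[1]{w_x}_\infty\bigr).
\end{equation*}
Setting $\kappa:=\bigl(\enVert[1]{\Phi}_{L_t^\infty(L_x^1)}^2+\enVert[1]{\Psi}_\infty\bigr)^{1/2}$, which has the claimed dependence on the data and vanishes when both norms vanish, and taking square roots gives $\int_0^T|G(t)|\,dt\le C\beta(\varepsilon_0)\kappa(1+\enVert[1]{w_x}_\infty)^{1/2}$. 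Plugging back into the representation delivers the stated estimate.

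The main subtlety is ensuring that the dependence on $\enVert{w_x}_\infty$ enters only through the sublinear factor $(1+\enVert{w_x}_\infty)^{1/2}$ rather than linearly; this is precisely what makes the corollary useful in the subsequent bootstrap arguments, and it relies on the fact that the quadratic-in-data right-hand sides of Propositions~\ref{prop:L2timeL1space_mu} and \ref{prop:EnergyEstimate_w} are controlled by the \emph{square root} of $\enVert{w_x}_\infty$ after Cauchy--Schwarz. A secondary, routine point is rigorously justifying the duality identity when $\hat m$ starts from a Dirac mass: one replaces $\delta_{x_0}$ with smooth mollifiers, performs the computation, and passes to the limit using interior parabolic regularity of $\hat m$ for $t>t_0$.
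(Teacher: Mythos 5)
Your approach is genuinely different from the paper's. The paper argues directly via a comparison/maximum-principle argument: from \eqref{abstract_system}(i) it extracts the pointwise bound $\envert{w_t+\tfrac{\sigma^2}{2}w_{xx}-rw-F^{(0)}w_x}\le f(t)+\enVert{\Psi}_\infty$ with $f(t)=\enVert{F^{(0)}}_\infty\beta(\varepsilon\xi)\int_0^\infty\envert{w_xm^{(0)}+u_x^{(0)}\mu}\,dx$, constructs a supersolution/subsolution by adding $\pm\int_t^T e^{-rs}(f+\enVert{\Psi}_\infty)\,ds$, and then estimates $\int_0^T e^{-rs}f(s)\,ds$ by Cauchy--Schwarz combined with Propositions~\ref{prop:L2timeL1space_mu} and \ref{prop:EnergyEstimate_w}. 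You instead obtain a representation formula for $w(x_0,t_0)$ by pairing against the fundamental solution of the dual Fokker--Planck equation with absorbing boundary and Dirac initial datum. Your route is more in the spirit of the duality estimates of Section~\ref{subsec:Duality} and yields a clean representation, but it imports the technical burden of constructing and justifying the fundamental solution on the half-line (which you acknowledge). The paper's argument avoids this entirely; the two ultimately bound the same quantity, namely $\int_0^T\beta(\varepsilon\xi)\bigl(\int\envert{w_x}m^{(0)}+\int\envert{u_x^{(0)}\mu}\bigr)\,dt$, in terms of the energy and $L^2_tL^1_x$ estimates.

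There is one small but genuine slip at the end. The inequality
\begin{equation*}
\enVert[1]{\Phi}_{L_t^\infty(L_x^1)}^2+\enVert[1]{\Phi}_{L_t^\infty(L_x^1)}\enVert[1]{w_x}_\infty+\enVert[1]{\Psi}_\infty\le\bigl(\enVert[1]{\Phi}_{L_t^\infty(L_x^1)}^2+\enVert[1]{\Psi}_\infty\bigr)\bigl(1+\enVert[1]{w_x}_\infty\bigr)
\end{equation*}
is false in general: take $\enVert{\Phi}_{L_t^\infty(L_x^1)}=0.1$, $\enVert{\Psi}_\infty=0$, $\enVert{w_x}_\infty=1$ to get $0.11\not\le 0.02$. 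The cross term $\enVert{\Phi}\enVert{w_x}_\infty$ cannot be absorbed into $\enVert{\Phi}^2\enVert{w_x}_\infty$ when $\enVert{\Phi}<1$. The fix is cheap: set $\kappa:=\bigl(\enVert[1]{\Phi}_{L_t^\infty(L_x^1)}+\enVert[1]{\Phi}_{L_t^\infty(L_x^1)}^2+\enVert[1]{\Psi}_\infty\bigr)^{1/2}$ (or any $\kappa$ containing a linear-in-$\enVert{\Phi}$ term); then $\enVert{\Phi}^2+\enVert{\Phi}\enVert{w_x}_\infty+\enVert{\Psi}_\infty\le\kappa^2(1+\enVert{w_x}_\infty)$ holds, $\kappa$ still vanishes when $\enVert{\Phi}_{L_t^\infty(L_x^1)}=\enVert{\Psi}_\infty=0$, and the rest of the proof goes through unchanged. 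The paper's own $\kappa$ likewise carries a linear-in-$\enVert{\Phi}$ contribution, for exactly this reason.
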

\begin{proof}
	Begin by defining
	\begin{equation}
		f(t):=\enVert[1]{F^{(0)}}_{\infty}\beta(\varepsilon\xi)\int_0^\infty\envert{w_xm^{(0)}+u_x^{(0)}\mu}\dif x.
	\end{equation}
	System \eqref{abstract_system} implies
	\begin{equation}\label{max_wEq1}
		\envert[2]{w_t+\frac{\sigma^2}{2}w_{xx}-rw-F^{(0)}w_x}\leq f(t)+\enVert[1]{\Psi}_{\infty}.
	\end{equation}
	Set
	\begin{equation}
		v(x,t):=e^{-rt}w(x,t)-\int_t^Te^{-rs}\big(f(s)+\enVert[1]{\Psi}_\infty\big)\dif s,
	\end{equation}
	and hence \eqref{max_wEq1} implies
	\begin{equation}
		v_t+\frac{\sigma^2}{2}v_{xx}-F^{(0)}v_x\geq 0.
	\end{equation}
	Using the standard maximum principle on $v$, we find that
	\begin{equation}
		e^{-rt}w(x,t)\leq\int_0^Te^{-rs}\big(f(s)+\enVert[1]{\Psi}_{\infty}\big)\dif s.
	\end{equation}
	A similar argument, where we define
	\begin{equation}
		v(x,t):=e^{-rt}w(x,t)+\int_t^T\big(f(s)+\enVert[1]{\Psi}_{\infty}\big)\dif s
	\end{equation}
	instead, shows that
	\begin{equation}
		e^{-rt}w(x,t)\geq-\int_0^Te^{-rs}\big(f(s)+\enVert[1]{\Psi}_{\infty}\big)\dif s.
	\end{equation}
	As such,
	\begin{equation}\label{max_wEq2}
		\envert[2]{e^{-rt}w(x,t)}\leq\int_0^Te^{-rs}\big(f(s)+\enVert[1]{\Psi}_{\infty}\big)\dif s\,\,\text{ for all }(x,t)\in[0,\infty)\times[0,T].
	\end{equation}
	Using several careful applications of the Cauchy-Schwarz inequality we estimate
	\begin{align}
		\int_0^Te^{-rs}f(s)\dif s&\leq\enVert[1]{F^{(0)}}_{\infty}\Bigg(\int_0^Te^{-rs}\int_0^\infty\envert[2]{w_xm^{(0)}}\dif x\dif s+\int_0^Te^{-rs}\int_0^\infty\envert[2]{u_x^{(0)}\mu}\dif x\dif s\Bigg)
		\\[4pt]
		&\leq\enVert[1]{F^{(0)}}_{\infty}\int_0^Te^{-rs}\Bigg(\int_0^\infty w_x^2m^{(0)}\dif x\Bigg)^{1/2}\Bigg(\int_0^\infty m^{(0)}\dif x\Bigg)^{1/2}\dif s\nonumber
		\\[4pt]
		&\quad+\enVert[1]{F^{(0)}}_{\infty}\Bigg(\int_0^Te^{-rs}\dif s\Bigg)^{1/2}\Bigg(\int_0^Te^{-rs}\Bigg(\int_0^\infty\envert[2]{u_x^{(0)}\mu}\dif x\Bigg)^2\dif s\Bigg)^{1/2}\nonumber
		\\[4pt]
		&\leq\enVert[1]{F^{(0)}}_{\infty}\int_0^Te^{-rs}\Bigg(\int_0^\infty w_x^2m^{(0)}\dif x\Bigg)^{1/2}\dif s\nonumber
		\\[4pt]
		&\quad+\enVert[1]{F^{(0)}}_{\infty}\enVert[1]{u_x^{(0)}}_{\infty}r^{-1/2}\Bigg(\int_0^Te^{-rs}\Bigg(\int_0^\infty|\mu|\dif x\Bigg)^2\dif s\Bigg)^{1/2}\nonumber
		\\[4pt]
		&\leq\enVert[1]{F^{(0)}}_{\infty}\Bigg(\int_0^Te^{-rs}\dif s\Bigg)^{1/2}\Bigg(\int_0^Te^{-rs}\int_0^\infty w_x^2m^{(0)}\dif x\dif s\Bigg)^{1/2}\nonumber
		\\[4pt]
		&\quad+\enVert[1]{F^{(0)}}_{\infty}\enVert[1]{u_x^{(0)}}_{\infty}r^{-1/2}\Bigg(C_1\Vert \Phi\Vert_{L_t^\infty(L_x^1)}^2+C_2\int_0^Te^{-rs}\int_0^\infty w_x^2m^{(0)}\dif x\dif s\Bigg)^{1/2}\nonumber
		\\[4pt]
		&\leq C\Bigg(\enVert{\Phi}_{L_t^\infty(L_x^1)}^2+\int_0^Te^{-rs}\int_0^\infty w_x^2m^{(0)}\dif x\dif s\Bigg)^{1/2}.\nonumber
	\end{align}
	Note that the fourth inequality follows from Proposition~\ref{prop:L2timeL1space_mu}, so that $C_1,C_2\in(0,\infty)$ are constants that depend only on $\enVert[1]{u_x^{(0)}}_{\infty}$, $\sigma$, and $T$.
	Also, the constant $C$ can be given by
	\begin{equation}
		C:=\enVert[1]{F^{(0)}}_{\infty}r^{-1/2}\Big(1+\enVert[1]{u_x^{(0)}}_{\infty}(C_1+C_2)^{1/2}\Big).
	\end{equation}
	Therefore, combining this estimate with \eqref{max_wEq2} we obtain
	\begin{equation}
		|w(x,t)|\leq Ce^{rT}\Bigg(\enVert{\Phi}_{L_t^\infty(L_x^1)}^2+\int_0^Te^{-rs}\int_0^\infty w_x^2m^{(0)}\dif x\dif s\Bigg)^{1/2}+r^{-1}e^{rT}\enVert[1]{\Psi}_{\infty}
	\end{equation}
	for all $(x,t)\in[0,\infty)\times[0,T]$.
	Now, using the energy estimate, established in Proposition \ref{prop:EnergyEstimate_w}, there exist constants $C_3,C_4\in(0,\infty)$, depending only on $\enVert[1]{u_x^{(0)}}_{\infty}$, $\sigma$, $r$, and $T$ such that for $\kappa':=\enVert{\Phi}_{L_t^\infty(L_x^1)}^2+\enVert{\Psi}_{\infty}$,
	\begin{align}
		|w(x,t)|&\leq Ce^{rT}\Big(\enVert{\Phi}_{L_t^\infty(L_x^1)}^2+C_3\kappa'+C_4\enVert{\Phi}_{L_t^\infty(L_x^1)}\enVert[1]{w_x}_{\infty}\Big)^{1/2}+r^{-1}e^{rT}\enVert[1]{\Psi}_{\infty}
		\\[4pt]
		&\leq C\Big(\enVert{\Phi}_{L_t^\infty(L_x^1)}^2+C_3\kappa'+C_4\enVert{\Phi}_{L_t^\infty(L_x^1)}\Big)^{1/2}\big(1+\enVert[1]{w_x}_{\infty}\big)^{1/2}+r^{-1}e^{rT}\enVert[1]{\Psi}_{\infty}\nonumber
	\end{align}
	for all $(x,t)\in[0,\infty)\times[0,T]$, which is the desired result.
	\end{proof}

\subsection{Estimates in the dual of $\s{C}^{1+\alpha}$}\label{subsec:Duality}

In this subsection we want to provide a priori estimates on $\mu$, the solution to (ii) of System \eqref{abstract_system}, in the space
\begin{equation}
	\s{C}^{\alpha/4}([0,T];\,(\s{C}^{1+\alpha})^\ast)
\end{equation}
for a given $\alpha \in (0,1)$.
Such estimates are required to deduce the time-regularity of integral terms involving $\mu$.
We first use duality methods to obtain estimates on $\mu$ in the space
\begin{equation}
	\s{C}^{\alpha/4}([0,T];\,(\s{C}^{1+\alpha}_\diamond)^\ast),
\end{equation}
where $\s{C}^{1+\alpha}_\diamond$ is the space of all $\phi \in \s{C}^{1+\alpha}$ such that $\phi(0) = 0$.
These estimates rely on corresponding estimates for the primal problem, which are found in Appendix \ref{sec:abstract holder}.

\begin{proposition}\label{prop:HolderEstimates_mu}
	Let $(w,\mu)$ satisfy \eqref{abstract_system}.
	Let $\varepsilon_0$ satisfy \eqref{Assumption:epsilon_small}. Then for $\varepsilon\in[0,\varepsilon_0]$ there exists a constant $\kappa\in[0,\infty)$ and a constant $C\in(0,\infty)$, depending exclusively on $\enVert[1]{u_x^{(0)}}_{\infty}$, $\sigma$, $\lambda$, $\alpha$, and $T$, such that 
	\begin{equation}
		\enVert[1]{\mu}_{\s{C}^{\alpha/4}([0,T];\,(\s{C}^{1+\alpha}_\diamond)^\ast)}\leq C\kappa\big(1+\enVert[1]{w_x}_{\infty}\big)^{1/2}.
	\end{equation}
	Moreover, the constant $\kappa$ depends solely on $\enVert[1]{\Phi}_{L_t^\infty(L_x^1)}$ and $\enVert[1]{\Psi}_{\infty}$ in such a way that $\kappa=0$ whenever $\enVert[1]{\Phi}_{L_t^\infty(L_x^1)}=\enVert[1]{\Psi}_{\infty}=0$.
\end{proposition}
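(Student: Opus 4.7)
The plan is a duality argument in the spirit of \cite[Lemma~3.2.2]{cardaliaguet2019master}, adapted to the unbounded half-line with Dirichlet conditions at $x=0$. Fix $0 \leq t_1 < t_2 \leq T$ and a test function $\phi \in \s{C}^{1+\alpha}_\diamond$ with $\enVert{\phi}_{\s{C}^{1+\alpha}} \leq 1$. I introduce $u = u(x,t)$ as the solution on $[0,t_2]$ of the backward adjoint equation
\begin{equation*}
	u_t + \frac{\sigma^2}{2}u_{xx} - F^{(0)} u_x = 0, \qquad u(0,t) = 0, \qquad u(x,t_2) = \phi(x).
\end{equation*}
After reversing time and performing a standard shift $v = e^{-\lambda s}\tilde u$ (with, say, $\lambda = 1$) to match the hypotheses of Lemma~\ref{lem:uniform t Holder ux}, that lemma delivers
\begin{equation*}
	\enVert{u}_{\s{C}^{\alpha,\alpha/2}} + \enVert{u_x}_{\s{C}^{\alpha,\alpha/2}} \leq C\enVert{\phi}_{\s{C}^{1+\alpha}},
\end{equation*}
with $C$ depending only on $\enVert{F^{(0)}}_\infty$, $\sigma$, $\alpha$, and $T$.

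The next step is to compute $\frac{\dif}{\dif t}\int_0^\infty u\mu\dif x$, substitute the evolution equations for $u$ and $\mu$, and integrate by parts in $x$. The diffusion terms combine into $[u\mu_x - u_x\mu]_0^\infty$, which vanishes at $x=0$ since $u(0,t)=\mu(0,t)=0$ and at $x=\infty$ by the decay furnished by the Duhamel representations of Section~\ref{sec:a priori F-P}; similarly $\int_0^\infty F^{(0)}u_x\mu\dif x + \int_0^\infty u(F^{(0)}\mu)_x\dif x = 0$ after one further integration by parts. Integrating from $t_1$ to $t_2$ and using $u(\cdot,t_2)=\phi$, I arrive at the duality identity
\begin{equation*}
	\int_0^\infty \phi\bigl(\mu(\cdot,t_2)-\mu(\cdot,t_1)\bigr)\dif x = -\int_0^\infty \bigl(\phi - u(\cdot,t_1)\bigr)\mu(\cdot,t_1)\dif x - \int_{t_1}^{t_2}\!\!\int_0^\infty u_x\Bigl[\Phi + \tfrac{1}{2}(G-w_x)m^{(0)}\Bigr]\dif x\dif t.
\end{equation*}

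I estimate the first right-hand term by $\enVert{\phi - u(\cdot,t_1)}_\infty \cdot \enVert{\mu(\cdot,t_1)}_{L_x^1} \leq C(t_2-t_1)^{\alpha/2}\enVert{\phi}_{\s{C}^{1+\alpha}}\enVert{\mu}_{L_t^\infty(L_x^1)}$, using that $u(\cdot,t_2)=\phi$ together with the time-Hölder part of the Schauder estimate, and with $\enVert{\mu}_{L_t^\infty(L_x^1)}$ controlled by Proposition~\ref{prop:LtInftyLx1_moment_mu}. For the second term I factor out $\enVert{u_x}_\infty \leq C\enVert{\phi}_{\s{C}^{1+\alpha}}$ and expand $G$; using $\int m^{(0)}\dif x \leq 1$ and Cauchy--Schwarz on $\int m^{(0)}|w_x|\dif x$, the spatial integral is bounded pointwise in $t$ by $\enVert{\Phi(\cdot,t)}_{L_x^1} + C\enVert{\mu(\cdot,t)}_{L_x^1} + C(\int_0^\infty m^{(0)} w_x^2\dif x)^{1/2}$. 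Applying Cauchy--Schwarz in time to extract a factor $(t_2-t_1)^{1/2}$ leaves $L^2$-in-time norms of these quantities, handled respectively by Propositions~\ref{prop:L2timeL1space_mu} and \ref{prop:EnergyEstimate_w}; the latter is precisely where the factor $(1+\enVert{w_x}_\infty)^{1/2}$ and the $\Psi$-dependence of $\kappa$ arise. Using $(t_2-t_1)^{1/2} \leq T^{(1-\alpha)/2}(t_2-t_1)^{\alpha/2}$, taking the supremum over admissible $\phi$, and specializing $t_1=0$ (where $\mu \equiv 0$) for the $L^\infty_t$ part of the dual norm, I obtain the claimed bound.

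The principal obstacle is bookkeeping: tracking through the energy estimate, the $L^2_t L^1_x$ estimate on $\mu$, the $L^\infty_t L^1_x$ bound from Proposition~\ref{prop:LtInftyLx1_moment_mu}, and the Schauder bound on $u$, so that everything collapses into the precise form $C\kappa(1+\enVert{w_x}_\infty)^{1/2}$ with the stated homogeneity of $\kappa$ in $\Phi$ and $\Psi$ (in particular, vanishing when both data vanish, since each of the cited a priori estimates scales that way). A secondary technical point is the justification of the integration by parts at $x=\infty$, which should follow from the integrability and decay already exploited in Section~\ref{sec:a priori F-P} but deserves a brief verification.
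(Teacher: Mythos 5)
Your overall strategy — a duality argument via the backward adjoint of the Fokker--Planck equation, with regularity of the test function supplied by Lemma~\ref{lem:uniform t Holder ux}, time integration, Cauchy--Schwarz, and the energy and $L^2_t(L^1_x)$ estimates — is the same as the paper's, and your treatment of the second (integral) term tracks the paper's treatment of $\eta_1,\eta_2$ almost exactly. However, there is a genuine gap in how you handle the first (boundary) term
\begin{equation*}
\int_0^\infty \bigl(\phi - u(\cdot,t_1)\bigr)\,\mu(\cdot,t_1)\,\dif x.
\end{equation*}
You estimate it by $\enVert{\phi-u(\cdot,t_1)}_\infty\,\enVert{\mu}_{L_t^\infty(L_x^1)}$ and propose to control $\enVert{\mu}_{L_t^\infty(L_x^1)}$ via Proposition~\ref{prop:LtInftyLx1_moment_mu}. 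But in that proposition, the bound is $\enVert{\mu}_{L_t^\infty(L_x^1)}\leq C_\sigma\enVert{\nu}_{L_t^\infty(L_x^1)}$ with $\nu=\Phi+\tfrac12(G-w_x)m^{(0)}$. Unwinding $\nu$ (and absorbing the self-referential $G$ piece using smallness of $\beta$) yields $\enVert{\mu}_{L_t^\infty(L_x^1)}\leq C\enVert{\Phi}_{L_t^\infty(L_x^1)}+C\sup_t\bigl(\int w_x^2 m^{(0)}\dif x\bigr)^{1/2}$, and the pointwise-in-time supremum of $\bigl(\int w_x^2 m^{(0)}\dif x\bigr)^{1/2}$ is only bounded by $\enVert{w_x}_\infty$, i.e.\ exponent $1$ in $(1+\enVert{w_x}_\infty)$. (Even the sharper bootstrap of Proposition~\ref{prop:HolderEstimates_mu_2}, which postdates this proposition in the paper, only gives exponent $3/4$ for the $L_t^\infty(L_x^1)$ norm.) This spoils the claimed exponent $1/2$.

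The fix is to pair the boundary term in $\s{C}^{1+\alpha}\times(\s{C}^{1+\alpha})^\ast$ rather than $L^\infty\times L^1$: bound $\enVert{\phi-u(\cdot,t_1)}_{\s{C}^{1+\alpha}}\leq C|t_2-t_1|^{\alpha/2}\enVert{\phi}_{\s{C}^{1+\alpha}}$ using the full $\s{C}^{1+\alpha,\alpha/2}$ Schauder estimate from Lemma~\ref{lem:uniform t Holder ux}, and multiply by $\enVert{\mu}_{L_t^\infty((\s{C}^{1+\alpha}_\diamond)^\ast)}$. That dual norm is exactly what the $t_1=0$ specialization (where the boundary term vanishes since $\mu(\cdot,0)=0$) gives you, and it carries the correct $(1+\enVert{w_x}_\infty)^{1/2}$ factor because it is fed entirely by the time-averaged integral terms. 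In other words, you need the same two-step bootstrap as the paper: first obtain the $L^\infty_t$ dual bound from the $t_1=0$ case, then use that bound — not the $L_t^\infty(L_x^1)$ bound — to control the boundary term when estimating the Hölder seminorm in time. Everything else in your outline is sound, including the integration-by-parts bookkeeping and the justification of decay at infinity from the Duhamel representation.
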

\begin{proof}
	Fix some $t_1\in[0,T]$.
	Consider the dual PDE to $\mu$,
	\begin{equation}
		\begin{cases}
			\begin{array}{ll}
				-\psi_t-\frac{\sigma^2}{2}\psi_{xx}+F^{(0)}\psi_x+\lambda\psi=0, \hspace{2cm} &(x,t)\in[0,\infty)\times[0,t_1]
				\\[4pt]
				\psi(x,t_1)=\varphi(x)\in\s{C}_{\diamond}^{1+\alpha}([0,\infty)), & x\in[0,\infty)
				\\[4pt]
				\psi(0,t)=0, & t\in[0,t_1],
			\end{array}
		\end{cases}
	\end{equation}
	for some $\lambda>0$.
	By Lemma \ref{lem:uniform t Holder ux}, we have
	\begin{equation} \label{eq:psi estimate}
		\enVert{\psi}_{\s{C}^{1+\alpha,\alpha/2}}, \enVert{\psi}_{\s{C}^{\alpha/4}\del{[0,T];\s{C}^{1+\alpha/2}(\overline{\s{D}})}}  \leq C_0\enVert{\phi}_{\s{C}^{1+\alpha}}
	\end{equation}
	for some constant $C_0$ depending only on $\lambda$, $\alpha$, and $\enVert{F^{(0)}}_\infty$.
	
	We write $\widetilde{\psi}(x,t):=e^{-\lambda t}\psi(x,t)$, so that the above equation implies
	\begin{equation}
		-\widetilde{\psi}_t-\frac{\sigma^2}{2}\widetilde{\psi}_{xx}+F^{(0)}\widetilde{\psi}_x=0.
	\end{equation}
	We observe that
	\begin{equation}
		\begin{split}
		\frac{\dif}{\dif t}\int_0^\infty\widetilde{\psi}(x,t)\mu(x,t)\dif x&=\int_0^\infty\big(\widetilde{\psi}_t\mu+\widetilde{\psi}\mu_t\big)\dif t
		\\[4pt]
		&=\int_0^\infty\big(-\frac{\sigma^2}{2}\widetilde{\psi}_{xx}+F^{(0)}\widetilde{\psi}_x\big)\mu\dif x
		\\[4pt]
		&\quad+\int_0^\infty\widetilde{\psi}\Bigg(\frac{\sigma^2}{2}\mu_{xx}+\big[F^{(0)}\mu\big]_x+\Big[\Phi+\frac{1}{2}(G-w_x)m^{(0)}\Big]_x\Bigg)\dif x 
		\\[4pt]
		&=-\int_0^\infty\widetilde{\psi}_x\Big(\Phi+\frac{1}{2}(G-w_x)m^{(0)}\Big)\dif x.
	\end{split}
	\end{equation}
	The first equality follows via differentiation under the integral sign, the second equality follows from substituting in the equations for $\widetilde{\psi}_t$ and $\mu_t$, and the last equality follows via integration by parts.
	Consequently, with $t_2\in[0,t_1]$ arbitrary, we have
	\begin{align}\label{muDualEstimate1}
		\int_0^\infty\widetilde{\psi}(x,t_1)\mu(x,t_1)\dif x&-\int_0^\infty\widetilde{\psi}(x,t_2)\mu(x,t_2)\dif x
		\\[4pt]
		&=-\int_{t_2}^{t_1}\int_0^\infty \psi_xe^{-\lambda t}\Big(\Phi+\frac{1}{2}(G-w_x)m^{(0)}\Big)\dif x\dif t\nonumber
		\\[4pt]
		&\leq\enVert{\psi_x}_{\infty}\int_{t_2}^{t_1}\int_0^\infty e^{-\lambda t}\envert[2]{\Phi+\frac{1}{2}(G-w_x)m^{(0)}}\dif x\dif t\nonumber
		\\[4pt]
		&\leq C_0\enVert[1]{\varphi}_{\s{C}^{1+\alpha}}\int_{t_2}^{t_1}e^{-\lambda t}\big(\eta_1(t)+\eta_2(t)\big)\dif t,\nonumber
	\end{align}
	where we have used \eqref{eq:psi estimate}, and where
	\begin{equation}
		\eta_1(t):=\int_0^\infty\envert[1]{\Phi}\dif x\,\,\text{ and }\,\, \eta_2(t):=\frac{1}{2}\int_0^\infty|G-w_x|m^{(0)}\dif x.
	\end{equation}
	By assumption and the fact that $e^{-\lambda t}\in\s{C}^{\alpha/2}([0,T])$, we find
	\begin{equation}\label{eta_1Estimate}
		\int_{t_2}^{t_1}e^{-\lambda t}\eta_1(t)\dif t\leq\frac{1}{\lambda}\enVert[1]{\Phi}_{L_t^\infty(L_x^1)}\envert{e^{-\lambda t_1}-e^{-\lambda t_2}}\leq C_1\enVert[1]{\Phi}_{L_t^\infty(L_x^1)}|t_1-t_2|^{\alpha/2},
	\end{equation}
	where the constant $C_1$ depends on $\lambda$ and $\alpha$.
	
	Turning our attention now to $\eta_2$, we first use the Cauchy-Schwarz inequality to write
	\begin{equation}\label{eta_2C-S}
		\int_{t_2}^{t_1}e^{-\lambda t}\eta_2(t)\dif t\leq\Bigg(\int_{t_2}^{t_1}e^{-\lambda t}\eta_2(t)^2\dif t\Bigg)^{1/2}\Bigg(\int_{t_2}^{t_1}e^{-\lambda t}\dif t\Bigg)^{1/2}.
	\end{equation}
	Then $\eta_2(t)^2$ can be estimated in the following way.
	\begin{align}
		\eta_2(t)^2
		&\leq\frac{1}{4}\Bigg(\int_0^\infty|u_x^{(0)}\mu|\dif x+2\int_0^\infty|w_x|m^{(0)}\dif x\Bigg)^2\nonumber
		\\[4pt]
		&\leq\frac{1}{2}\Bigg(\int_0^\infty|u_x^{(0)}\mu|\dif x\Bigg)^2+2\Bigg(\int_0^\infty|w_x|m^{(0)}\dif x\Bigg)^2\nonumber
		\\[4pt]
		&\leq\frac{1}{2}\enVert[1]{u_x^{(0)}}_{\infty}\Bigg(\int_0^\infty |\mu|\dif x\Bigg)^2+2\int_0^\infty w_x^2m^{(0)}\dif x.\nonumber
	\end{align}
	The first inequality follows from a combination of using the triangle inequality, unpacking the definition of $G$, and recalling that $\int_0^\infty m^{(0)}\dif x\leq 1$.
	The second inequality uses the fact that $\big(\sum_{k=1}^nx_k\big)^2\leq n\sum_{k=1}^nx_k^2$.
	The third inequality is a consequence of the Cauchy-Schwarz inequality.
	Using this initial estimate on $\eta_2$, we find that
	\begin{align}\label{eta_2^2Estimate}
		\int_{t_2}^{t_1}e^{-\lambda t}\eta_2(t)^2\dif t&\leq\frac{1}{2}\enVert[1]{u_x^{(0)}}_{\infty}\int_{t_2}^{t_1}e^{-\lambda t}\Bigg(\int_0^\infty |\mu|\dif x\Bigg)^2\dif t+2\int_{t_2}^{t_1}e^{-\lambda t}\int_0^\infty w_x^2m^{(0)}\dif x\dif t
		\\[4pt]
		&\leq C_2\kappa'+C_3\enVert[1]{\Phi}_{L_t^\infty(L_x^1)}\enVert[1]{w_x}_{\infty}\nonumber
	\end{align}
	where the constants $C_2,C_3\in(0,\infty)$ depend only on $\enVert[1]{u_x^{(0)}}_{\infty}$, $\sigma$, $\lambda$, and $T$, and we define
	\begin{equation}\label{def:kappa'}
		\kappa':=\enVert{\Psi}_\infty+\enVert{\Phi}_{L_t^\infty(L_x^1)}^2.
	\end{equation}
	The second inequality in \eqref{eta_2^2Estimate} is a consequence of Proposition~\ref{prop:L2timeL1space_mu}, followed by an application of Proposition~\ref{prop:EnergyEstimate_w}.
	Returning to \eqref{eta_2C-S}, it follows that
	\begin{align}\label{eta_2Estimate}
		\int_{t_2}^{t_1}e^{-\lambda t}\eta_2(t)\dif t&\leq \Big(C_2\kappa'+C_3\enVert[1]{\Phi}_{L_t^\infty(L_x^1)}\enVert[1]{w_x}_{\infty}\Big)^{1/2}\Bigg(\int_{t_1}^{t_2}e^{-\lambda t}\dif t\Bigg)^{1/2}
		\\[4pt]
		&\leq C_1\Big(C_2\kappa'+C_3\enVert[1]{\Phi}_{L_t^\infty(L_x^1)}\enVert[1]{w_x}_{\infty}\Big)^{1/2}|t_1-t_2|^{1/2}\nonumber
	\end{align}
	with $C_1$ as in \eqref{eta_1Estimate}.
	Hence \eqref{muDualEstimate1} and the two estimates in \eqref{eta_1Estimate} and \eqref{eta_2Estimate} show that
	\begin{align}\label{muDualEstimate2}
		\int_0^\infty\widetilde{\psi}(x,t_1)&\mu(x,t_1)\dif x-\int_0^\infty\widetilde{\psi}(x,t_2)\mu(x,t_2)\dif x
		\\[4pt]
		&\leq C\Big(\big(\kappa'+\enVert[1]{\Phi}_{L_t^\infty(L_x^1)}\big)^{1/2}+\enVert[1]{\Phi}_{L_t^\infty(L_x^1)}\Big)\big(1+\enVert[1]{w_x}_{\infty}\big)^{1/2}|t_1-t_2|^{\alpha/2}\enVert[1]{\varphi}_{\s{C}_\diamond^{1+\alpha}},\nonumber
	\end{align}
	where
	\begin{equation}\label{def:muDual_C}
		C_\alpha:=C_0C_1\big((C_2+C_3)^{1/2}+1\big).
	\end{equation}

	Now, call
	\begin{equation}
		\kappa:=\big(\kappa'+\enVert[1]{\Phi}_{L_t^\infty(L_x^1)}\big)^{1/2}+\enVert[1]{\Phi}_{L_t^\infty(L_x^1)},
	\end{equation}
	and set $t_2=0$, so that $\mu(x,t_2)=0$. Hence \eqref{muDualEstimate2} implies
	\begin{equation}
		\int_0^\infty\mu(x,t_1)\varphi(x)\dif x\leq C\kappa e^{\lambda T}\big(1+\enVert[1]{w_x}_{\infty}\big)^{1/2}T^{\alpha/2}\enVert[1]{\varphi}_{\s{C}_\diamond^{1+\alpha}}.
	\end{equation}
	In particular, since $t_1$ and $\varphi$ were arbitrary, we have
	\begin{equation}\label{LinftyTimeHolderDual_mu}
		\enVert[1]{\mu}_{L_t^\infty(C_\diamond^{1+\alpha})^\ast}\leq C_\alpha\kappa \big(1+\enVert[1]{w_x}_{\infty}\big)^{1/2},
	\end{equation}
	with $C_\alpha' :=Ce^{\lambda T}T^{\alpha/2}$.\\
	Again let $t_1,t_2\in[0,T]$ be arbitrary and without loss of generality take $t_1\geq t_2$.
	Then
	\begin{align}\label{Step2_InitialEstimate}
		\int_0^\infty \widetilde{\psi}(x,t_1)\big(\mu(x,t_1)-\mu(x,t_2)\big)\dif x&=\int_0^\infty\widetilde{\psi}(x,t_1)\mu(x,t_1)\dif x-\int_0^\infty\widetilde{\psi}(x,t_2)\mu(x,t_2)\dif x
		\\[4pt]
		&\quad+\int_0^\infty\big(\widetilde{\psi}(x,t_2)-\widetilde{\psi}(x,t_1)\big)\mu(x,t_2)\dif x.\nonumber
	\end{align}
	For the last integral, we estimate
	\begin{align}
		\int_0^\infty\big(\widetilde{\psi}(x,t_2)-\widetilde{\psi}(x,t_1)\big)\mu(x,t_2)\dif x&\leq \enVert[1]{\widetilde{\psi}}_{\s{C}^{\alpha/4}([0,t_1];\s{C}_\diamond^{1+\alpha/2})}\enVert[1]{\mu}_{L_t^\infty(\s{C}^{1+\alpha/2})^\ast}|t_1-t_2|^{\alpha/4}
		\\[4pt]
		&\leq C_{\alpha/2}'\kappa\enVert[1]{\varphi}_{\s{C}_\diamond^{1+\alpha}}(1+\enVert[1]{w_x}_{\infty})^{1/2}|t_1-t_2|^{\alpha/4},\nonumber
	\end{align}
	where the last inequality follows from \eqref{LinftyTimeHolderDual_mu} and \eqref{eq:psi estimate}.
	Combining this with what we found in \eqref{muDualEstimate2}, the equation in \eqref{Step2_InitialEstimate} implies
	\begin{equation}
		\int_0^\infty e^{-\lambda t_1}\varphi(x)\big(\mu(x,t_1)-\mu(x,t_2)\big)\dif x\leq (C'_{\alpha/2}+C_\alpha T^{\alpha/4})\kappa\enVert[1]{\varphi}_{\s{C}_\diamond^{1+\alpha}}(1+\enVert[1]{w_x}_{\infty})^{1/2}|t_1-t_2|^{\alpha/4}.
	\end{equation}
	Hence, as $t_1$, $t_2$, and $\varphi$ were arbitrary,
	\begin{equation}
		\enVert[1]{\mu}_{\s{C}^{\alpha/4}([0,T];(\s{C}^{1+\alpha})^\ast)}\leq (C'_{\alpha/2}+C_\alpha T^{\alpha/4})\kappa(1+\enVert[1]{w_x}_{\infty})^{1/2}.
	\end{equation}
	This is the desired result.
\end{proof}

Notice that any $\phi \in \s{C}^{1+\alpha}$ can be written as the sum of a constant and an element of $\s{C}^{1+\alpha}_\diamond$, since $\phi - \phi(0) \in \s{C}^{1+\alpha}_\diamond$.
Therefore the following proposition will complete our estimates of $\mu$ in the dual of $\s{C}^{1+\alpha}$.
\begin{proposition}\label{prop:HolderEstimates_mu_2}
	Let $(w,\mu)$ satisfy \eqref{abstract_system}. 
	Define
	\begin{equation}
		\eta(t) := \int_0^\infty \mu(x,t)\dif x.
	\end{equation}
	Let $\varepsilon_0$ satisfy \eqref{Assumption:epsilon_small}. Then for $\varepsilon\in[0,\varepsilon_0]$ and $\alpha \in \intoc{0,1/6}$, there exists a constant $\kappa\in[0,\infty)$, and a constant $C\in(0,\infty)$, depending solely on $\enVert[1]{u_x^{(0)}}_{\infty}$, $\sigma$, $\lambda$, $\alpha$ and $T$ such that for all $\varepsilon\in[0,\varepsilon_0]$
	\begin{equation} \label{eq:eta holder}
		\enVert{\eta}_{\s{C}^{\alpha}([0,T])} \leq C\kappa\big(1 + \enVert[1]{w_x}_{\infty}\big)^{5/6}.
	\end{equation}
	Moreover, the constant $\kappa$ depends exclusively on $\enVert[1]{\Phi}_{L_t^\infty(L_x^1)}$ and $\enVert[1]{\Psi}_{\infty}$ in such a way that $\kappa=0$ whenever $\enVert[1]{\Phi}_{L_t^\infty(L_x^1)}=\enVert[1]{\Psi}_{\infty}=0$.
\end{proposition}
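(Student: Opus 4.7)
The constant function $1$ is not in $\s{C}^{1+\alpha}_\diamond$ (it fails to vanish at $x=0$), so Proposition~\ref{prop:HolderEstimates_mu} cannot be applied directly to $\eta(t) = \int_0^\infty \mu(x,t)\dif x$. The plan is to approximate $1$ by a smooth cutoff $\chi_\delta$ vanishing at $0$, handle the pairing $\int\chi_\delta\mu\dif x$ via Proposition~\ref{prop:HolderEstimates_mu}, control the boundary remainder $\int(1-\chi_\delta)\mu\dif x$ by the mass and first-moment bounds of Proposition~\ref{prop:LtInftyLx1_moment_mu}, and optimize the cutoff scale~$\delta$ against the time increment. The $L^\infty$-in-$t$ bound will come for free by interpolating the resulting H\"older seminorm with the $L^2_t$ estimate produced by Propositions~\ref{prop:L2timeL1space_mu} and~\ref{prop:EnergyEstimate_w}.

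\textbf{Cutoff construction and the smooth piece.} For $\delta>0$, choose $\chi_\delta\in\s{C}^{1+\alpha}_\diamond$ with $0\le\chi_\delta\le1$, $\chi_\delta\equiv1$ on $[\delta,\infty)$, $\chi_\delta(0)=0$, and $\enVert{\chi_\delta}_{\s{C}^{1+\alpha}}\le C\delta^{-(1+\alpha)}$. Write
\begin{equation*}
\eta(t_1)-\eta(t_2)=\int(\mu(t_1)-\mu(t_2))\chi_\delta\dif x+\int(\mu(t_1)-\mu(t_2))(1-\chi_\delta)\dif x.
\end{equation*}
Proposition~\ref{prop:HolderEstimates_mu} applied to $\chi_\delta$ bounds the first integral by $C\delta^{-(1+\alpha)}\kappa(1+\enVert{w_x}_\infty)^{1/2}|t_1-t_2|^{\alpha/2}$.

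\textbf{Boundary remainder and balancing.} Since $1-\chi_\delta$ is supported in $[0,\delta]$ with $\enVert{1-\chi_\delta}_\infty\le1$, the second integral is bounded by $2\sup_\tau\int_0^\delta|\mu(x,\tau)|\dif x$. Combining Proposition~\ref{prop:LtInftyLx1_moment_mu}(a)–(b) with the Dirichlet condition $\mu(0,\tau)=0$—which forces near-boundary smallness of $\mu$ through the cancellation between $S_x(x-y,\cdot)$ and $S_x(x+y,\cdot)$ in the Duhamel representation used to prove that proposition—one obtains $\sup_\tau\int_0^\delta|\mu|\dif x\le C(1+\enVert{w_x}_\infty)\delta^{\gamma}$ for a suitable $\gamma>0$. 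Writing $A=\kappa(1+\enVert{w_x}_\infty)^{1/2}$ and $B=C(1+\enVert{w_x}_\infty)$, I equate the two estimates by taking $\delta^{\gamma+1+\alpha}\sim(A/B)|t_1-t_2|^{\alpha/2}$; this yields
\begin{equation*}
|\eta(t_1)-\eta(t_2)|\le CA^{\gamma/(\gamma+1+\alpha)}B^{(1+\alpha)/(\gamma+1+\alpha)}|t_1-t_2|^{\alpha\gamma/(2(\gamma+1+\alpha))},
\end{equation*}
and tuning $\gamma$ so that the time exponent equals $\alpha$ produces the $(1+\enVert{w_x}_\infty)^{5/6}$ prefactor. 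The $L^\infty$ bound on $\eta$ then follows by interpolating this H\"older seminorm with $\enVert{\eta}_{L^2_t}\le C\kappa(1+\enVert{w_x}_\infty)^{1/2}$, which comes from Propositions~\ref{prop:L2timeL1space_mu} and~\ref{prop:EnergyEstimate_w}.

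\textbf{Main obstacle.} The crux is the boundary bound $\int_0^\delta|\mu|\dif x\lesssim\delta^\gamma$: the raw $L^\infty_tL^1_x$ bound gives no $\delta$-smallness, so one must use $\mu(0,t)=0$ quantitatively via a refinement of the Duhamel argument of Proposition~\ref{prop:LtInftyLx1_moment_mu}, tracking the cancellation in the reflected heat kernel as $x\to0$ and interpolating with the first-moment estimate on $x\mu$. It is precisely this extra power $\delta^\gamma$, together with the $\delta^{-(1+\alpha)}$ blow-up of $\enVert{\chi_\delta}_{\s{C}^{1+\alpha}}$, that drives the peculiar exponents $\alpha$ and $5/6$ in the final estimate.
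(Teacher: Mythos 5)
Your strategy differs fundamentally from the paper's, which works directly with the Duhamel representation: it integrates the heat-kernel formula for $\mu$ over $x \in (0,\infty)$ to get an explicit expression $\eta = \eta_1 + \eta_2$, then estimates increments $\eta_i(t_1) - \eta_i(t_0)$ by splitting the time integral at $t_0$ and using the pointwise bound $|\partial_t S(x,t)| \lesssim t^{-3/2}$, together with an $L^\infty_t L^1_x$ bound on $\mu$ and the energy estimate. Your route — approximate the test function $1$ by $\chi_\delta \in \s{C}^{1+\alpha}_\diamond$, pair with Proposition~\ref{prop:HolderEstimates_mu}, control the remainder near $x=0$, and optimize in $\delta$ — is a genuinely different decomposition. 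However, as written it has a real gap.

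The step that would fail is the boundary bound $\sup_\tau \int_0^\delta |\mu|\,\dif x \leq C(1+\enVert{w_x}_\infty)\delta^\gamma$ for some $\gamma > 0$. This does not follow from Proposition~\ref{prop:LtInftyLx1_moment_mu}: part (a) gives only an $L^\infty_t L^1_x$ bound with no $\delta$-smallness, and part (b) controls the first moment $\int_0^\infty x|\mu|\,\dif x$, which by Chebyshev controls the \emph{tail} $\int_\delta^\infty|\mu|$, not the near-boundary piece $\int_0^\delta|\mu|$. You acknowledge this is the crux and propose a refinement of the Duhamel argument exploiting the cancellation in the reflected kernel, but this is precisely where the difficulty lies. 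Near $x=0$ the relevant kernel satisfies $S_x(x-y,t-s) + S_x(x+y,t-s) \approx 2x\, S_{xx}(y,t-s)$, and while the extra factor of $x$ is helpful, $\int_0^\infty |S_{xx}(y,t)|\,\dif y \sim t^{-1}$ is non-integrable in time, so the naive bound $|\mu(x,t)| \lesssim x$ fails; one would at best hope for a logarithmic or fractional gain, and nothing you cite establishes it. Without a proven value of $\gamma$, the subsequent exponent-balancing is ungrounded: the claim that "tuning $\gamma$ so that the time exponent equals $\alpha$ produces the $(1+\enVert{w_x}_\infty)^{5/6}$ prefactor" treats $\gamma$ as a free parameter when it would in fact be dictated by the (unproven) boundary estimate, and the precise composition of exponents that would yield both time exponent $\alpha$ and prefactor exponent $5/6$ simultaneously is never carried out. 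Finally, the $L^\infty_t$ bound on $\eta$ is not obtained by interpolation in the paper but directly, and the paper's prefactor $5/6$ arises from a specific Hölder–energy interpolation (with split exponents $2/3$, $1/3$, $1/6$) in the direct Duhamel estimate rather than from a cutoff optimization. The direct approach bypasses the boundary-layer problem entirely, which is exactly what makes it work.
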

\begin{proof}
	We begin by applying \eqref{eq:mu duhamel} with the identifications
	$\mu:=\mu$, $b:=F^{(0)}$, and\\ $\nu:=\Phi+\frac{1}{2}(G-w_x)m^{(0)}$ to get
	\begin{equation} \label{eq:mn duhamel}
		\mu(x,t) = 
		I_1(x,t) + I_2(x,t),
	\end{equation}
	where
	\begin{equation}
		\begin{split}
			I_1(x,t) &:= \int_{0}^{t}\int_0^\infty \del{\dpd{S}{x}(x-y,t-s) + \dpd{S}{x}(x+y,t-s)}F^{(0)}(y,s)\mu(y,s)\dif y \dif s,\\
			I_2(x,t) &:= \int_{0}^{t}\int_0^\infty \del{\dpd{S}{x}(x-y,t-s) + \dpd{S}{x}(x+y,t-s)}\nu(y,s)\dif y \dif s.
		\end{split}
	\end{equation}
	Our first step will be to prove a bound on the functional
	\begin{equation}
		f(t):=\int_0^\infty|\mu(x,t)|\dif x.
	\end{equation}
	By \eqref{Phi+(G-wx)m0 estimate} and H\"older's inequality, we have
	\begin{equation} \label{eq:K + Fm0 estimate 1}
		\int_0^\infty \envert{\nu(x,t)} \dif x\leq \enVert[1]{\Phi}_{L_x^1} + \enVert[1]{u_x^{(0)}}_\infty f(t) +2\del{\int_0^\infty \envert{w_x}^2 m^{(0)} \dif x}^{1/2}.
	\end{equation}
	We apply \eqref{eq:K + Fm0 estimate 1} and \eqref{Integral_S_x} to \eqref{eq:mn duhamel}
	and get
	\begin{equation} \label{eq:f estimate 0}
		f(t)\leq \int_{0}^{t}(t-s)^{-1/2}\del{C_1 f(s)+g(s)}\,\dif s,
	\end{equation}
	where
	\begin{equation}
		C_1 = c'\del{\enVert[1]{u_x^{(0)}}_\infty + \enVert[1]{F^{(0)}}_\infty}
		\,\,\text{ and }\,\,
		g(s) = \enVert[1]{\Phi}_{L_x^1} + 2\Bigg(\int_0^\infty \abs{w_x}^2 m^{(0)} \dif x\Bigg)^{1/2},
	\end{equation}
	and $c'$ is as in \eqref{c'_constant}. Multiply both sides of \eqref{eq:f estimate 0} by $e^{-\lambda t}$ for some $\lambda > 0$ to be chosen.
	Define $f_\lambda(t) = e^{-\lambda t}f(t)$ and $g_\lambda(t) = e^{-\lambda t}g(t)$.
	We see
	\begin{equation} \label{eq:f estimate 1}
		f_\lambda(t)\leq \int_{0}^{t}(t-s)^{-1/2}e^{-\lambda(t-s)}\del{C_1 f_\lambda(s)+g_\lambda(s)}\dif s.
	\end{equation}
	We wish to estimate $\int_{0}^{t}(t-s)^{-1/2}e^{-\lambda(t-s)}g_\lambda(s)\dif s$.	
	Since
	\begin{equation}
		\int_{0}^{t}(t-s)^{-1/2}e^{-\lambda(t-s)}\dif s
		= \int_{0}^{t}s^{-1/2}e^{-\lambda s}\dif s
		\leq \lambda^{-1/2}\int_{0}^{\infty}s^{-1/2}e^{- s}\dif s = \lambda^{-1/2}\pi^{1/2},
	\end{equation}
	we have
	\begin{multline}
		\int_{0}^{t}(t-s)^{-1/2}e^{-\lambda(t-s)}g_\lambda(s)\dif s\\
		\leq \enVert[1]{\Phi}_{L_t^\infty(L_x^1)}\lambda^{-1/2}\pi^{1/2}
		+ 2\int_{0}^{t}(t-s)^{-1/2}e^{-\lambda(t-s)}\Bigg(\int_0^\infty w_x^2 m^{(0)}\dif x\Bigg)^{1/2}\dif s.
	\end{multline}
	On the other hand, using Proposition \ref{prop:EnergyEstimate_w}, H\"older's inequality, and recalling that $\int_0^\infty m^{(0)}\dif x \leq 1$, we get
	\begin{equation}
		\begin{split}
			\int_{0}^{t}&(t-s)^{-1/2}e^{-\lambda(t-s)}\del{\int_0^\infty w_x^2 m^{(0)}\dif x}^{1/2}\dif s\\
			&\leq \enVert{w_x}_{L^\infty_{x,t}}^{1/2}\int_{0}^{t}(t-s)^{-1/2}e^{-\lambda(t-s)}\del{\int_0^\infty \envert{w_x}^{2} m^{(0)}\dif x}^{1/4}\dif s\\
			&\leq \enVert{w_x}_{L^\infty_{x,t}}^{1/2}\del{\int_{0}^{t}(t-s)^{-2/3}e^{-\frac{4}{3}\lambda(t-s)}\dif s}^{3/4}
			\del{\int_{0}^{t}\int_0^\infty \envert{w_x}^{2} m^{(0)}\dif x\dif s}^{1/4}\\
			&\leq \enVert{w_x}_{L^\infty_{x,t}}^{1/2}\lambda^{-1/4}\Gamma(1/3)\del{C_2\enVert{\Phi}_{L_t^\infty(L_x^1)}\enVert{w_x}_{\infty} + C_3\kappa'}^{1/4},
		\end{split}
	\end{equation}
	where $C_2$ and $C_3$ are constants depending only on $\sigma$, $\lambda$, and $T$, and $\kappa'$ is as in \eqref{def:kappa'}. We combine these estimates to get
	\begin{multline} \label{eq:f estimate 2}
		f_\lambda(t)\leq C_1\int_{0}^{t}(t-s)^{-1/2}e^{-\lambda(t-s)} f_\lambda(s)\dif s\\
		+ \enVert[1]{\Phi}_{L_t^\infty(L_x^1)}\lambda^{-1/2}\pi^{1/2}
		+ 2\enVert{w_x}_{\infty}^{1/2}\lambda^{-1/4}\Gamma(1/3)\del{C_2\enVert{\Phi}_{L_t^\infty(L_x^1)}\enVert{w_x}_{\infty} + C_3\kappa'}^{1/4}.
	\end{multline}
	Taking the supremum, we get
	\begin{multline} \label{eq:f estimate 3}
		\sup_{t\in[0,T]}f_\lambda(t)\leq C_1\lambda^{-1/2}\pi^{1/2}\sup_{s\in[0,T]}f_\lambda(s)\\
		+ \enVert[1]{\Phi}_{L_t^\infty(L_x^1)}\lambda^{-1/2}\pi^{1/2}
		+ 2\enVert{w_x}_{\infty}^{1/2}\lambda^{-1/4}\Gamma(1/3)\del{C_2\enVert{\Phi}_{L_t^\infty(L_x^1)}\enVert{w_x}_{\infty} + C_3\kappa'}^{1/4}.
	\end{multline}
	Then setting $\lambda = (2C_1 \pi^{1/2})^2$, we deduce
	\begin{equation}
		\sup_{t\in[0,T]}f_\lambda(t)\leq 2\enVert[1]{\Phi}_{L_t^\infty(L_x^1)}\lambda^{-1/2}\pi^{1/2}
		+ 4\enVert{w_x}_{\infty}^{1/2}\lambda^{-1/4}\Gamma(1/3)\del{C_2\enVert{\Phi}_{L_t^\infty(L_x^1)}\enVert{w_x}_{\infty} + C_3\kappa'}^{1/4},
	\end{equation}
	which yields
	\begin{equation} \label{eq:abs mn bound}
		\sup_{t\in[0,T]} \int_0^\infty \abs{\mu(x,t)}\dif x \leq A\kappa''\big(\enVert{w_x}_{\infty}+1\big)^{3/4} + B\enVert{\Phi}_{L_t^\infty(L_x^1)}
	\end{equation}
	for some constants $A,B\in(0,\infty)$, depending only on $\enVert[1]{u_x^{(0)}}_{\infty}$, $\sigma$, $\lambda$, and $T$, and
	\begin{equation}
		\kappa'':=\kappa'+\enVert{\Phi}_{L_t^\infty(L_x^1)}.
	\end{equation}

	We now turn to H\"{o}lder estimates.
	By integrating \eqref{eq:mn duhamel} with respect to $x$, we discover that
	\begin{equation}
		\eta(t) = \eta_1(t) + \eta_2(t),
	\end{equation}
	where
	\begin{equation}
		\begin{split}
			\eta_1(t) &= -2\int_0^t \int_0^\infty S(y,t-s)F^{(0)}(y,s)\mu(y,s)\dif y \dif s,\\
			\eta_2(t) &= -2\int_0^t \int_0^\infty S(y,t-s)\nu(y,s)\dif y \dif s.
		\end{split}
	\end{equation}
	We want H\"older estimates on each term.
	Let $0 \leq t_0 \leq t_1 \leq T$.
	For $\eta_1$, write
	\begin{multline}
		\eta_1(t_1) - \eta_1(t_0)
		= - 2\int_{t_0}^{t_1} \int_0^\infty S(y,t_1-s)F^{(0)}(y,s)\mu(y,s)\dif y \dif s\\
		- 2\int_0^{t_0} \int_0^\infty \int_{t_0}^{t_1} \dpd{S}{t}(y,t-s)F^{(0)}(y,s)\mu(y,s)\dif t\dif y\dif s.
	\end{multline}
	Note that
	\begin{equation}
		\dpd{S}{t}(x,t) = (2\sigma^2 \pi)^{-1/2}t^{-3/2}\del{\frac{x^2}{2\sigma^2 t} - \frac{1}{2}}\exp\cbr{- 	\frac{x^2}{2\sigma^2 t}}.
	\end{equation}
	Since $xe^{-x} \leq e^{-1} \leq 1/2$ for all $x$, it follows that
	\begin{equation}
		\abs{\dpd{S}{t}(x,t)} \leq (2\sigma^2 \pi)^{-1/2}t^{-3/2} \quad \forall x.
	\end{equation}
	Let $c' = 2(2\sigma^2 \pi)^{-1/2}$.
	We get
	\begin{multline}
		\abs{\eta_1(t_1) - \eta_1(t_0)}
		\leq c'\enVert[1]{F^{(0)}}_\infty \int_{t_0}^{t_1} (t_1 - s)^{-1/2} \int_0^\infty \envert{\mu(y,s)}\dif y \dif s\\
		+ c'\enVert[1]{F^{(0)}}_\infty \int_0^{t_0} \int_0^\infty \int_{t_0}^{t_1} (t-s)^{-3/2} \envert{\mu(y,s)}\dif t\dif y\dif s.
	\end{multline}
	Using \eqref{eq:abs mn bound} and computing the remaining integrals, we obtain
	\begin{equation} \label{eq:eta1 estimate}
		\abs{\eta_1(t_1) - \eta_1(t_0)} \leq C\enVert[1]{F^{(0)}}_\infty\del{A\kappa'\big(\enVert{w_x}_{\infty}+1\big)^{3/4} + B\enVert{\Phi}_{L_t^\infty(L_x^1)}}(t_1-t_0)^{1/2}
	\end{equation}
	for some constant $C\in(0,\infty)$.
	
	We proceed similarly for $\eta_2$.
	Write
	\begin{multline}
		\eta_2(t_1) - \eta_2(t_0)
		= 2\int_{t_0}^{t_1} \int_0^\infty S(y,t_1-s)\nu(y,s)\dif y \dif s\\
		+ 2\int_0^{t_0} \int_0^\infty \int_{t_0}^{t_1} \dpd{S}{t}(y,t-s)\nu(y,s)\dif t\dif y\dif s.
	\end{multline}
	By \eqref{Phi+(G-wx)m0 estimate} and \eqref{eq:abs mn bound}, we have
	\begin{equation}
		\int_0^\infty \abs{\nu(x,t)} \dif x \leq \tilde A \kappa'' \big(\enVert{w_x}_{\infty}+1\big)^{3/4} + \tilde B \enVert{\Phi}_{L_t^\infty(L_x^1)}
		+ 2\del{\int_0^\infty \envert{w_x(x,t)}^2 m^{(0)}(x,t) \dif x}^{1/2},
	\end{equation}
	where $\tilde A$ and $\tilde B$ are again constants depending only on $\enVert[1]{u_x^{(0)}}_{\infty}$, $\sigma$, $\lambda$, and $T$.
	Using the same reasoning as above, we get
	\begin{multline}
		\abs{\eta_2(t_1) - \eta_2(t_0)}
		\leq C\del{\tilde A\kappa' \big(\enVert{w_x}_{\infty}+1\big)^{3/4} + \tilde B \enVert{\Phi}_{L_t^\infty(L_x^1)}}(t_1-t_0)^{1/2}\\
		+ 2c'\int_{t_0}^{t_1} (t_1-s)^{-1/2}\del{\int_0^\infty \envert{w_x(x,s)}^2 m^{(0)}(x,s) \dif x}^{1/2} \dif s\\
		+ 2c'\int_0^{t_0}  \int_{t_0}^{t_1} (t-s)^{-3/2}\del{\int_0^\infty \envert{w_x(x,s)}^2 m^{(0)}(x,s) \dif x}^{1/2}\dif t \dif s.
	\end{multline}
	Using H\"older's inequality and Proposition \ref{prop:EnergyEstimate_w}, we get
	\begin{equation}
		\begin{split}
			\int_0^{t_0}  &\int_{t_0}^{t_1} (t-s)^{-3/2}\del{\int_0^\infty \abs{w_x(x,s)}^2 m^{(0)}(x,s) \dif x}^{1/2}\dif t \dif s\\
			&\leq \enVert{w_x}_{\infty}^{2/3} \int_0^{t_0}  \int_{t_0}^{t_1} (t-s)^{-3/2}\del{\int_0^\infty \abs{w_x(x,s)}^2 m^{(0)}(x,s) \dif x}^{1/6}\dif t \dif s\\
			&\leq \enVert{w_x}_{\infty}^{2/3} \del{\int_0^{t_0}  \int_{t_0}^{t_1} (t-s)^{-9/5}\dif t \dif s}^{5/6}
			\del{\int_0^{t_0}  \int_{t_0}^{t_1} \int_0^\infty \abs{w_x(x,s)}^2 m^{(0)}(x,s) \dif x\dif t \dif s}^{1/6}\\
			&\leq C\enVert{w_x}_{\infty}^{2/3}(t_1-t_0)^{1/6}\del{C_2\enVert{\Phi}_{L_t^\infty(L_x^1)}\enVert{w_x}_{\infty} + C_3\kappa'}^{1/6}.
		\end{split}
	\end{equation}
	By a similar computation, we get
	\begin{multline}
		\int_{t_0}^{t_1} (t_1-s)^{-1/2}\del{\int_0^\infty \abs{w_x(x,s)}^2 m^{(0)}(x,s) \dif x}^{1/2} \dif s\\
		\leq C\enVert{w_x}_{\infty}^{2/3}(t_1-t_0)^{1/3}\del{C_2\enVert{\Phi}_{L_t^\infty(L_x^1)}\enVert{w_x}_{\infty} + C_3\kappa'}^{1/6}.
	\end{multline}
	We deduce
	\begin{equation} \label{eq:eta2 estimate}
		\begin{split}
			\abs{\eta_2(t_1) - \eta_2(t_0)}
		&\leq CT^{1/3}\del{\tilde A\kappa''\big(\enVert{w_x}_{\infty}+1)^{3/4} + \tilde B \enVert{\Phi}_{L_t^\infty(L_x^1)}}(t_1-t_0)^{1/6}
		\\[4pt]
		&\quad+ C(1+T^{1/6})\enVert{w_x}_{\infty}^{2/3}(t_1-t_0)^{1/6}\del{C_2\enVert{\Phi}_{L_t^\infty(L_x^1)}\enVert{w_x}_{\infty} + C_3\kappa'}^{1/6}.
		\end{split}
	\end{equation}
	Combining \eqref{eq:eta1 estimate} and \eqref{eq:eta2 estimate}, we can find some constant $\widetilde C \in(0,\infty)$, depending only on $\enVert[1]{F^{(0)}}_{\infty}$, $\enVert[1]{u_x^{(0)}}_{\infty}$, $\sigma$, $\lambda$, and $T$, such that
	\begin{equation}
		\envert{\eta(t_1) - \eta(t_0)} \leq \widetilde C\kappa\del{\enVert{w_x}_{\infty} + 1}^{5/6}(t_1 - t_0)^{1/6},
	\end{equation}
	where
	\begin{equation}
		\kappa:=\big(\kappa'+\enVert{\Phi}_{L_t^\infty(L_x^1)}\big)^{1/6}+2\big(\kappa''+\enVert{\Phi}_{L_t^\infty(L_x^1)}\big).
	\end{equation}
	This yields \eqref{eq:eta holder}, as desired.
\end{proof}

With Propositions \ref{prop:HolderEstimates_mu} and \ref{prop:HolderEstimates_mu_2} established, we now have the complete duality estimate on $\mu$.

\begin{corollary}\label{cor:DualEstimate_mu}
	Let $(w,\mu)$ satisfy \eqref{abstract_system}. Let $\varepsilon_0$ satisfy \eqref{Assumption:epsilon_small}. Then for $\varepsilon\in[0,\varepsilon_0]$, one has $\mu\in\s{C}^{\alpha/4}\big([0,T];(C^{1+\alpha})^\ast\big)$ with
	\begin{equation}
		\enVert[1]{\mu}_{\s{C}^{\alpha/4}\big([0,T];(\s{C}^{1+\alpha})^\ast\big)}\leq C\kappa\big(1+\enVert[1]{w_x}_{\infty}\big)^{5/6},
	\end{equation}
	for some constant $C\in(0,\infty)$, depending exclusively on $\enVert[1]{u_x^{(0)}}_{\infty}$, $\sigma$, $\lambda$, $\alpha$ and $T$; and $\kappa\in[0,\infty)$, depending solely on $\enVert[1]{\Phi}_{L_t^\infty(L_x^1)}$ and $\enVert[1]{\Psi}_{\infty}$ in such a way that $\kappa=0$ whenever $\enVert[1]{\Phi}_{L_t^\infty(L_x^1)}=\enVert[1]{\Psi}_{\infty}=0$.
\end{corollary}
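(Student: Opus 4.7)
The plan is to reduce the estimate to the two preceding propositions by decomposing a test function $\varphi \in \s{C}^{1+\alpha}$ into its value at the boundary and a remainder vanishing at $x=0$. Specifically, for any $\varphi \in \s{C}^{1+\alpha}$, write
\begin{equation*}
	\varphi(x) = \varphi(0) + \widetilde{\varphi}(x), \qquad \widetilde{\varphi}(x) := \varphi(x) - \varphi(0),
\end{equation*}
so that $\widetilde{\varphi} \in \s{C}^{1+\alpha}_\diamond$ with $\enVert{\widetilde{\varphi}}_{\s{C}^{1+\alpha}_\diamond} \leq 2\enVert{\varphi}_{\s{C}^{1+\alpha}}$ and $|\varphi(0)| \leq \enVert{\varphi}_{\s{C}^{1+\alpha}}$.

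First, I would pair $\mu(\cdot, t_1) - \mu(\cdot, t_2)$ with $\varphi$ and use linearity to split
\begin{equation*}
	\int_0^\infty \varphi(x)\bigl(\mu(x,t_1) - \mu(x,t_2)\bigr)\dif x = \varphi(0)\bigl(\eta(t_1) - \eta(t_2)\bigr) + \int_0^\infty \widetilde{\varphi}(x)\bigl(\mu(x,t_1) - \mu(x,t_2)\bigr)\dif x,
\end{equation*}
where $\eta(t) := \int_0^\infty \mu(x,t)\dif x$. The first term is controlled by Proposition \ref{prop:HolderEstimates_mu_2}, which yields an estimate of order $\kappa_2 (1 + \enVert{w_x}_\infty)^{5/6}|t_1-t_2|^{\alpha/2}$ (after noting $(t_1-t_2)^{1/3} \leq T^{1/3 - \alpha/2}(t_1-t_2)^{\alpha/2}$ when $\alpha \leq 2/3$, or by revisiting the proof of that proposition to extract a H\"older exponent $\alpha/2$ directly). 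The second term is controlled by Proposition \ref{prop:HolderEstimates_mu}, which gives a bound of order $\kappa_1(1 + \enVert{w_x}_\infty)^{1/2}|t_1-t_2|^{\alpha/2}$.

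Then I would combine the two bounds. Since $1 + \enVert{w_x}_\infty \geq 1$, the smaller exponent satisfies $(1+\enVert{w_x}_\infty)^{1/2} \leq (1+\enVert{w_x}_\infty)^{5/6}$, so both contributions can be absorbed into a single bound of the form $C\kappa(1 + \enVert{w_x}_\infty)^{5/6}|t_1-t_2|^{\alpha/2}\enVert{\varphi}_{\s{C}^{1+\alpha}}$, where $\kappa$ is the sum of the two constants produced by Propositions \ref{prop:HolderEstimates_mu} and \ref{prop:HolderEstimates_mu_2}. In particular, $\kappa$ depends only on $\enVert{\Phi}_{L_t^\infty(L_x^1)}$ and $\enVert{\Psi}_\infty$, and vanishes when both do, since the corresponding constants in the two propositions share this property. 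Taking the supremum over $\varphi$ with $\enVert{\varphi}_{\s{C}^{1+\alpha}} \leq 1$ and over $t_1 \neq t_2$ produces the stated norm bound.

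The main obstacle is simply bookkeeping: one has to verify that the H\"older exponent in time extracted from Proposition \ref{prop:HolderEstimates_mu_2} is compatible with the exponent $\alpha/2$ demanded here. Since the estimate there is of H\"older order $1/3$, and $\alpha \in (0,1)$ is flexible (one may shrink $\alpha$ if necessary, or equivalently use the finiteness of $T$ to convert a $1/3$ H\"older bound into an $\alpha/2$ H\"older bound whenever $\alpha/2 \leq 1/3$), this can be handled without any real difficulty. No genuinely new PDE analysis is required—the corollary is essentially a packaging of the two preceding results.
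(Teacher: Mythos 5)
Your proposal is correct and matches exactly the paper's intended argument: the sentence preceding the corollary states that any $\varphi\in\s{C}^{1+\alpha}$ splits as $\varphi(0)+(\varphi-\varphi(0))$ with $\varphi-\varphi(0)\in\s{C}^{1+\alpha}_\diamond$, and then Propositions~\ref{prop:HolderEstimates_mu} and~\ref{prop:HolderEstimates_mu_2} handle the two pieces, which is precisely what you do. You also correctly flag the minor bookkeeping point (reconciling the $1/3$ H\"older exponent produced in the proof of Proposition~\ref{prop:HolderEstimates_mu_2} with the target exponent $\alpha/2$), which the paper leaves implicit.
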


\subsection{Full $\s{C}^{2+\alpha,1+\alpha/2}$ regularity} \label{sec:full regularity}

We are now in a position to establish full parabolic regularity in classical spaces for solutions to System \eqref{abstract_system}.

\begin{theorem}\label{thm:HolderRegularity_w/mu}
	Let $\alpha \in (0,1/6]$ be such that $(u^{(0)},m^{(0)}) \in \s{C}^{2+2\alpha,1+\alpha}$.
	Let $(w,\mu)$ satisfy \eqref{abstract_system} with $\Psi\in\s{C}^{\alpha,\alpha/2}$ and $\Phi\in\s{C}^{1+\alpha,\alpha/2}$. Let $\varepsilon_0$ satisfy \eqref{Assumption:epsilon_small}.
	Then for $\varepsilon\in[0,\varepsilon_0]$, one has
	\begin{equation}
		w\in\s{C}^{2+\alpha, 1+\alpha/2}\big([0,\infty)\times[0,T]\big)\,\,\text{ and }\,\, \mu\in\s{C}^{2+\alpha, 1+\alpha/2}\big([0,\infty)\times[0,T]\big).
	\end{equation}
	Furthermore, there exist constants $C\in(0,\infty)$ and $\kappa\in[0,\infty)$, depending solely on the data such that
	\begin{equation}\label{HolderRegularityEstimates_w/mu}
		\enVert{w}_{\s{C}^{2+\alpha, 1+\alpha/2}}\leq C\kappa\,\,\text{ and }\,\,\enVert{\mu}_{\s{C}^{2+\alpha, 1+\alpha/2}}\leq C\kappa
	\end{equation}
	with $\kappa$ depending exclusively on $\enVert{\Psi}_{\s{C}^{\alpha,\alpha/2}}$ and $\enVert{\Phi}_{\s{C}^{1+\alpha,\alpha/2}}$ in such a way that $\kappa=0$ whenever $\Phi=\Psi=0$.
	In addition, we have 
	$\mu \in \s{C}^{1/2}\del{[0,T];L^1((0,\infty)}$ with an estimate
	\begin{equation} \label{eq:holder t L1x}
		\enVert{\mu}_{\s{C}^{1/2}\del{[0,T];L^1((0,\infty)}} \leq C\kappa.
		\end{equation}
\end{theorem}
\begin{proof}
	Begin by defining
	\begin{equation}
		\begin{array}{lll}
			\displaystyle \eta_1(t):=\int_0^\infty w_xm^{(0)}\dif x &\text{and}&\displaystyle \eta_2(t):=\int_0^\infty u_x^{(0)}\mu\dif x,
		\end{array}
	\end{equation}
	so that after expanding terms (i) of System \eqref{abstract_system} reads
	\begin{equation}
		w_t+\frac{\sigma^2}{2}w_{xx}-rw-F^{(0)}w_x+\beta(\varepsilon\xi)\big(\eta_1(t)+\eta_2(t)\big)+\Psi=0.
	\end{equation}
	Note that $\beta(\varepsilon\xi)\in\s{C}^{\alpha/2}([0,T])$, and by assumption, $\Psi\in\s{C}^{\alpha,\alpha/2}([0,\infty),[0,T])$.
	As such, using classical estimates \cite[Theorem IV.5.1]{ladyzhenskaia1968linear}, we find
	\begin{equation}\label{HolderEstimate1_w}
		\enVert[1]{w}_{\s{C}^{2+\alpha,1+\alpha/2}}\leq C_0\big(\enVert[1]{\Psi}_{\s{C}^{\alpha,\alpha/2}}+\enVert{\eta_1}_{\s{C}^{\alpha/2}}+\enVert{\eta_2}_{\s{C}^{\alpha/2}}\big),
	\end{equation}
	where the constant $C_0\in(0,\infty)$ depends on the data.
	
	Focus on $\eta_2$ first. We estimate
	\begin{align}\label{Eta2Estimate_wHolder}
		|\eta_2(t_1)-\eta_2(t_2)|&\leq\Big|\int_0^\infty\big(u_x^{(0)}(x,t_1)-u_x^{(0)}(x,t_2)\big)\mu(x,t_1)\dif x\Big|
		\\[4pt]
		&\quad+\Big|\int_0^\infty u_x^{(0)}(x,t_2)\big(\mu(x,t_1)-\mu(x,t_2)\big)\dif x\Big|\nonumber
		\\[4pt]
		&\leq\enVert[1]{\mu(\cdot, t_1)}_{(\s{C}^{1+2\alpha})^\ast}\enVert[1]{u_x^{(0)}(\cdot,t_1)-u_x^{(0)}(\cdot,t_2)}_{\s{C}^{1+2\alpha}}\nonumber
		\\[4pt]
		&\quad+\enVert[1]{u_x^{(0)}(\cdot, t_2)}_{\s{C}^{1+2\alpha}}\enVert[1]{\mu(\cdot,t_1)-\mu(\cdot,t_2)}_{(\s{C}^{1+2\alpha})^\ast}\nonumber
		\\[4pt]
		&\leq 2\enVert[1]{\mu}_{\s{C}^{\alpha/2}([0,T];(\s{C}^{1+2\alpha})^\ast)}\enVert[1]{u^{(0)}}_{\s{C}^{2+2\alpha,1+\alpha}}|t_1-t_2|^{\alpha/4}.\nonumber
	\end{align}
	Hence by Corollary \ref{cor:DualEstimate_mu},
	\begin{align}
		\enVert[1]{\eta_2}_{\s{C}^{\alpha/2}([0,T])}&\leq2\enVert[1]{\mu}_{\s{C}^{\alpha/2}([0,T];(\s{C}^{1+2\alpha})^\ast)}\enVert[1]{u^{(0)}}_{\s{C}^{2+2\alpha,1+\alpha}}
		\\[4pt]
		&\leq C_1\kappa'\big(1+\enVert[1]{w_x}_{\infty}\big)^{5/6}\nonumber
	\end{align}
	for some constant $C_1\in(0,\infty)$, depending solely on $\enVert[1]{u^{(0)}}_{\s{C}^{2+2\alpha,1+\alpha}}$ and the data, and $\kappa'\in[0,\infty)$, depending exclusively on $\enVert[1]{\Phi}_{L_t^\infty(L_x^1)}$ and $\enVert[1]{\Psi}_{\infty}$ in such a way that forces $k'=0$ whenever $\enVert[1]{\Phi}_{L_t^\infty(L_x^1)}=\enVert[1]{\Psi}_{\infty}=0$.
	On the other hand, since $m^{(0)}$ satisfies a (homogeneous) Fokker-Planck equation, by classical arguments (cf.~\cite[Lemma 3.1]{graber2021master}) combined with the estimates of Proposition \ref{prop:HolderEstimates_mu_2} (cf.~\cite[Lemma 3.7]{graber2021master}), we have an estimate on the norm $\enVert[1]{m^{(0)}}_{\s{C}^{\alpha/2}([0,T];(\s{C}^{\alpha})^\ast)}$.
	Now a similar calculation as in \eqref{Eta2Estimate_wHolder} shows
	\begin{equation}
		\enVert[1]{\eta_1(t)}_{C^{\alpha/2}}\leq C_2\enVert[1]{w_x}_{\s{C}^{\alpha,\alpha/2}},
	\end{equation}
	where the constant $C_2\in(0,\infty)$ depends solely on $\enVert[1]{m^{(0)}}_{\s{C}^{\alpha/4}([0,T];(\s{C}^{\alpha/2})^\ast)}$.
	Therefore, returning to \eqref{HolderEstimate1_w}, we have
	\begin{align}\label{HolderEstimate2_w}
		\enVert[1]{w}_{\s{C}^{2+\alpha, 1+\alpha/2}}&\leq C_0\Big(\enVert[1]{\Psi}_{\s{C}^{\alpha,\alpha/2}}+C_1\kappa'\big(1+\enVert[1]{w_x}_{\infty}\big)^{5/6}+C_2\enVert[1]{w_x}_{\s{C}^{\alpha,\alpha/2}}\Big)
		\\[4pt]
		&\leq  C_0\big(\enVert{\Psi}_{\s{C}^{\alpha,\alpha/2}}+C_1\kappa'+(C_1\kappa_1+C_2)\enVert[1]{w_x}_{\s{C}^{\alpha,\alpha/2}}\big)\nonumber
		\\[4pt]
		&\leq \widetilde C\big(\kappa_1+\enVert[1]{w_x}_{\s{C}^{\alpha,\alpha/2}}\big)\nonumber
	\end{align}
	where $\widetilde C:=C_0\big(1+C_1(\kappa_1+1)+C_2\big)$, and $\kappa_1:=\enVert{\Psi}_{\s{C}^{\alpha,\alpha/2}}+\kappa'$. By interpolation of H\"{o}lder spaces, there exists a constant $C'>0$ such that
	\begin{equation}\label{Interpolation}
		\enVert[1]{w_x}_{\s{C}^{\alpha,\alpha/2}}\leq \frac{1}{2\widetilde C}\enVert[1]{w}_{\s{C}^{2+\alpha,1+\alpha/2}}+C'\enVert[1]{w}_{\infty},
	\end{equation}
	where $\widetilde C$ is as in the last line of  \eqref{HolderEstimate2_w}.
	As such, \eqref{Interpolation} along with \eqref{HolderEstimate2_w} show that
	\begin{align}\label{un_Holder_regularity_C}
		\enVert[1]{w}_{\s{C}^{2+\alpha,1+\alpha/2}}&\leq 2\widetilde C\big(\kappa_1+C'\enVert[1]{w}_{\infty}\big)
		\\[4pt]
		&\leq 2\widetilde C\Big(\kappa_1+C'\big(C_3\kappa''(1+\enVert[1]{w_x}_{\infty})^{1/2}+C_4\enVert{\Psi}_{\infty}\big)\Big)\nonumber
		\\[4pt]
		&\leq C''\kappa_2\Big(1+\enVert[1]{w_x}_{\infty}^{1/2}\Big).\nonumber
	\end{align}
	The second inequality follows from Corollary \ref{cor:max_w}, so that the constants $C_3,C_4\in(0,\infty)$ depend only on the data, and $\kappa''\in[0,\infty)$ depends solely on $\enVert[1]{\Phi}_{L_t^\infty(L_x^1)}$ and $\enVert[1]{\Psi}_{\infty}$ in such a way that $\kappa''=0$ whenever $\enVert[1]{\Phi}_{L_t^\infty(L_x^1)}=\enVert[1]{\Psi}_{\infty}=0$.
	The constants $C''\in(0,\infty)$ and $\kappa_2\in[0,\infty)$ can be given by 
	\begin{equation}
		C'':=2\widetilde C(1+C'C_3+C_4)\,\,\text{ and }\,\,\kappa_2:=\kappa_1+\kappa''+\enVert{\Psi}_\infty.
	\end{equation}
	Subsequently, by Young's inequality,
	\begin{align}
		\enVert[1]{w}_{\s{C}^{2+\alpha,1+\alpha/2}}&\leq C''\kappa_2+\frac{1}{2}\enVert[1]{w}_{\s{C}^{2+\alpha,1+\alpha/2}}+\frac{1}{2}(C''\kappa_2)^2.
	\end{align}
	Thus,
	\begin{equation}\label{eq:wHolder}
		\enVert[1]{w}_{\s{C}^{2+\alpha,1+\alpha/2}}\leq 2C''\kappa_2+(C''\kappa_2)^2\leq C\kappa,
	\end{equation}
	with $C:=(C'')^2+2C''$ and $\kappa:=\kappa_2^2+\kappa_2$. Hence $w$ has the desired regularity with appropriate bounds in terms of the data as stated in \eqref{HolderRegularityEstimates_w/mu}.

	Turning our attention to $\mu$, we begin by observing that (ii) of System \eqref{abstract_system} implies
	\begin{equation}
		\mu_t-\frac{\sigma^2}{2}\mu_{xx}-\frac{1}{2}u_{xx}^{(0)}\mu-F^{(0)}\mu_x=\Phi_x+\frac{1}{2}\Big(Gm_x^{(0)}-w_{xx}m^{(0)}-w_xm_x^{(0)}\Big).
	\end{equation}
	We already have estimates on the coefficients in $\s{C}^{\alpha,\alpha/2}$.
	As for the right-hand side, we estimate $Gm_x^{(0)}$ in $\s{C}^{\alpha,\alpha/2}$ by using
	\begin{equation}
		\begin{split}
			\enVert[1]{G}_{\s{C}^{\alpha,\alpha/2}}&\leq \enVert{\int_0^\infty\big(u_x^{(0)}\mu+w_xm^{(0)}\big)\dif x}_{\s{C}^{\alpha,\alpha/2}}
		\\[4pt]
		&\leq \enVert[1]{u^{(0)}}_{\s{C}^{2+2\alpha,1+\alpha}}\enVert{\mu}_{\s{C}^{\alpha/2}([0,T];(\s{C}^{1+2\alpha})^\ast)}+\enVert{w}_{\s{C}^{2+\alpha,1+\alpha/2}}\enVert{m^{(0)}}_{\s{C}^{\alpha/2}([0,T];(\s{C}^{\alpha})^\ast)}
		\\[4pt]
		&\leq C_1\kappa_1,
		\end{split}
	\end{equation}
	for some $C_1\in(0,\infty)$, depending solely on the data, and $\kappa_1\in[0,\infty)$ depending on $\enVert{\Psi}_{\s{C}^{\alpha,\alpha/2}}$ and $\enVert{\Phi}_{\s{C}^{\alpha,\alpha/2}}$ in such a way that $\kappa_1=0$ whenever $\Phi=\Psi=0$.
	Finally, since $\Phi\in\s{C}^{1+\alpha,\alpha/2}$, we obtain via classical estimates \cite[Theorem IV.5.1]{ladyzhenskaia1968linear}
	\begin{align}
		\enVert[1]{\mu}_{\s{C}^{2+\alpha,1+\alpha/2}}&\leq C_0\Big(\enVert{\Phi_x}_{\s{C}^{\alpha,\alpha/2}}+\enVert[1]{Gm^{(0)}}_{\s{C}^{\alpha,\alpha/2}}+\enVert[1]{w_{xx}m^{(0)}}_{\s{C}^{\alpha,\alpha/2}}+\enVert[1]{w_xm^{(0)}}_{\s{C}^{\alpha,\alpha/2}}\Big)
		\\[4pt]
		&\leq C_0\Big(\enVert{\Phi_x}_{\s{C}^{\alpha,\alpha/2}}+\enVert[1]{G}_{\s{C}^{\alpha,\alpha/2}}\enVert[1]{m^{(0)}}_{\s{C}^{\alpha,\alpha/2}}+\enVert{w}_{\s{C}^{2+\alpha,1+\alpha/2}}\enVert[1]{m^{(0)}}_{\s{C}^{2+\alpha,1+\alpha/2}}\Big)\nonumber
		\\[4pt]
		&\leq C_0\Big(\enVert{\Phi_x}_{\s{C}^{\alpha,\alpha/2}}+C_1\kappa_1+C_2\kappa_2\Big)\nonumber
	\end{align}
	for some constant $C_0\in(0,\infty)$ depending only on the data, some constant $C_2\in(0,\infty)$ depending only on the data and $\enVert[1]{m^{(0)}}_{\s{C}^{2+\alpha,1+\alpha/2}}$, and $\kappa_2\in[0,\infty)$ depending on $\enVert{\Psi}_{\s{C}^{\alpha,\alpha/2}}$ and $\enVert{\Phi_x}_{\s{C}^{\alpha,\alpha/2}}$ in such a way that $\kappa_2=0$ whenever $\Phi=\Psi=0$.
	Therefore,
	\begin{equation}
		\enVert[1]{\mu}_{\s{C}^{2+\alpha,1+\alpha/2}}\leq C\kappa,
	\end{equation}
	with $C:=C_0(1+C_1+C_2)$ and $\kappa:=\enVert{\Phi_x}_{\s{C}^{\alpha,\alpha/2}}+\kappa_1+\kappa_2$.
	This demonstrates the desired regularity for $\mu$ with the required bound given in \eqref{HolderRegularityEstimates_w/mu}.
	
	Finally, we appeal to Proposition \ref{prop:LtInftyLx1_moment_mu} and the estimates just obtained to find that, for all $K > 0$,
	\begin{equation} \label{eq:holder t L1x1}
		\begin{split}
			\int_0^\infty \abs{\mu(x,t_1) - \mu(x,t_2)}\dif x
			&\leq \int_0^K \abs{\mu(x,t_1) - \mu(x,t_2)}\dif x
			+ \frac{1}{K}\int_K^\infty x\abs{\mu(x,t_1) - \mu(x,t_2)}\dif x\\
			&\leq K\enVert{\mu}_{\s{C}^{2,1}}\abs{t_1-t_2}
			+ \frac{C}{K}\del{\enVert{(1+x)\nu}_{L_t^\infty(L_x^1)}},
		\end{split}
	\end{equation}
	where $\nu = \Phi + (G - w_x)m^{(0)}$, and
	where $C$ is a constant depending on $\enVert{F^{(0}}_\infty$ and on the data.
	Pick $K = \abs{t_1 - t_2}^{-1/2}$ and take the supremum over $t_1 \neq t_2$ to get an estimate on the H\"older seminorm of $t \mapsto \mu(\cdot,t) \in L^1_x$.
	Combine this with the estimates above to obtain \eqref{eq:holder t L1x}.
\end{proof}

\section{Existence and uniqueness for the abstract system}\label{sec:L-S_Theorem}

In this section, we use a fixed point argument to demonstrate existence of solutions to \eqref{abstract_system}.
For completeness, we provide a statement of the Leray-Schauder fixed point theorem, taken from \cite[Theorem 11.6, p. 286]{gilbarg2015elliptic}.

\begin{theorem}[The Leray Schauder Fixed Point Theorem]\label{thm:Leray-Schauder}
	Let $\s{X}$ be a Banach space, and let $\s{T}:\s{X}\times[0,1]\to\s{X}$ be a compact mapping satisfying $\s{T}(x,0)=0$ for all $x\in \s{X}$.
	Suppose there exists a constant $C\in(0,\infty)$ such that
	\begin{equation}
		\enVert{x}_{\s X}\leq C\,\,\text{ for all }\,\, (x,\lambda)\in \s X\times[0,1]\,\text{ satisfying } \s{T}(x,\lambda)=x.
	\end{equation}
	Then the mapping $\s{T}_1:\s{X}\to \s{X}$, given by $\s{T}_1(x)=\s{T}(x,1)$, has a fixed point.
\end{theorem}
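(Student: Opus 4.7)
The plan is to invoke the Leray--Schauder degree for compact perturbations of the identity and argue by homotopy invariance. First I would recall (or construct, via finite-dimensional Brouwer degree combined with approximation of compact maps by finite-rank operators) the degree $\deg(I - K, \Omega, 0)$, defined whenever $\Omega \subset \s{X}$ is open and bounded, $K : \overline{\Omega} \to \s{X}$ is compact, and $x - K(x) \neq 0$ for $x \in \partial \Omega$. The three properties I need from this degree are: (i) \emph{normalization}, $\deg(I, \Omega, 0) = 1$ whenever $0 \in \Omega$; (ii) \emph{solvability}, $\deg(I - K, \Omega, 0) \neq 0$ implies existence of some $x \in \Omega$ with $K(x) = x$; and (iii) \emph{homotopy invariance}, namely if $H : \overline{\Omega} \times [0,1] \to \s{X}$ is compact and $x - H(x,\lambda) \neq 0$ for all $(x,\lambda) \in \partial \Omega \times [0,1]$, then $\lambda \mapsto \deg(I - H(\cdot, \lambda), \Omega, 0)$ is constant.

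Next I would set $R > C$ (with $C$ as in the hypothesis) and take $\Omega = \cbr{x \in \s{X} : \enVert{x}_{\s X} < R}$, so that $0 \in \Omega$. The a priori bound in the hypothesis says that any solution of $\s{T}(x,\lambda) = x$, for \emph{any} $\lambda \in [0,1]$, must satisfy $\enVert{x}_{\s X} \leq C < R$, and in particular cannot lie on $\partial \Omega$. Therefore the homotopy $H(x,\lambda) := \s{T}(x,\lambda)$ is admissible: it is compact on $\overline{\Omega} \times [0,1]$ by hypothesis, and $x - H(x,\lambda) \neq 0$ for all $(x,\lambda) \in \partial \Omega \times [0,1]$. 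Homotopy invariance then gives
\begin{equation*}
	\deg(I - \s{T}(\cdot, 1), \Omega, 0) = \deg(I - \s{T}(\cdot, 0), \Omega, 0).
\end{equation*}

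At $\lambda = 0$ the assumption $\s{T}(\cdot, 0) \equiv 0$ reduces $I - \s{T}(\cdot, 0)$ to the identity, and normalization gives $\deg(I, \Omega, 0) = 1$. Combining the last two displays yields $\deg(I - \s{T}(\cdot, 1), \Omega, 0) = 1 \neq 0$, whereupon solvability produces $x \in \Omega$ satisfying $\s{T}_1(x) = \s{T}(x, 1) = x$, which is the desired fixed point.

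The main obstacle is the construction of the Leray--Schauder degree itself, which is a substantial piece of nonlinear functional analysis: one approximates the compact map $K$ uniformly on $\overline{\Omega}$ by finite-rank compact maps $K_n$ (possible because $K(\overline{\Omega})$ is relatively compact and thus totally bounded), defines the degree of $I - K_n$ via Brouwer degree on a finite-dimensional subspace containing the range of $K_n$, and then checks that the resulting integer stabilizes as $n \to \infty$ and is independent of the approximating sequence. In a paper such as the present one this machinery is standard and would simply be quoted from a reference (e.g., Gilbarg--Trudinger or Deimling), so once the degree is in hand the actual \emph{proof} of the theorem reduces to the short homotopy argument sketched above.
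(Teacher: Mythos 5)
Your degree-theoretic argument is correct and is one of the standard ways to prove the Leray--Schauder fixed point theorem; the construction of the degree, the radius $R > C$, the admissibility of the homotopy $H(x,\lambda) = \s{T}(x,\lambda)$ on $\partial\Omega$ via the a priori bound, the normalization at $\lambda = 0$ using $\s{T}(\cdot,0) \equiv 0$, and the final invocation of solvability are all in order. However, the paper does not actually prove this theorem: it is stated ``for completeness'' and quoted directly from Gilbarg--Trudinger \cite[Theorem~11.6]{gilbarg2015elliptic}, so there is no proof in the paper to compare yours against. It is worth noting that the proof in the cited reference does not invoke Leray--Schauder degree at all; Gilbarg--Trudinger build up to Theorem~11.6 through Schauder's fixed point theorem for compact maps on convex sets (their Theorem~11.1) and a truncation/retraction device, thereby avoiding the full machinery of degree theory. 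Your route buys the conceptual transparency of homotopy invariance at the cost of having to construct (or quote) the infinite-dimensional degree via finite-rank approximation and Brouwer degree, whereas the Schauder-based route used in the paper's reference is more elementary but relies on a clever auxiliary map; both are standard, and either would be acceptable as a justification for the theorem the paper merely cites.
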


We now aim to construct a suitable Banach space along with a compact operator to satisfy the hypotheses of Theorem~\ref{thm:Leray-Schauder}.

Fix $\alpha \in (0,1/6]$ such that $u^{(0)},m^{(0)} \in \s{C}^{2+2\alpha,1+\alpha}$.
Define the Banach space $\s{X}:=\s{C}^{\alpha,\alpha/2}\cap L_t^\infty(L_x^1) \cap \s{C}^{\alpha/2}\del{[0,T];(\s{C}^{1+2\alpha})^*}$ with norm
\begin{equation}
	\enVert{m}_{\s{X}}:= \enVert{m}_{\s{C}^{\alpha,\alpha/2}} + \enVert{m}_{L_t^\infty(L_x^1)}
	+ \enVert{m}_{\s{C}^{\alpha/2}\del{[0,T];(\s{C}^{1+2\alpha})^*}}.
\end{equation}

Let $\Psi\in\s{C}^{\alpha,\alpha/2}$ and $\Phi\in\s{C}^{1+\alpha,\alpha/2}\cap L_t^\infty(L_x^1)$ be given functions with $x\Phi\in L_t^\infty(L_x^1)$ and $\Psi(0,T) = 0$.
For any $m\in\s{X}$ and $\lambda\in[0,1]$, let $\mu=\s{T}(m,\lambda)$ be determined by solving the system
\begin{equation}\label{def:L-S_operator}
	\begin{cases}
		\begin{array}{lll}
			(i)&\displaystyle w_t+\frac{\sigma^2}{2}w_{xx}-ru+\lambda\Psi+\lambda F^{(0)}(\varepsilon)\big(G(w_x,m;\varepsilon)-w_x\big)=0, \text{\hspace{.5cm}} &0\leq x< \infty,\,\,0\leq t\leq T\vspace{4pt}\\
			(ii)&\displaystyle \mu_t-\frac{\sigma^2}{2}\mu_{xx}-\lambda\big(F^{(0)}\mu\big)_x=\lambda\Big[\Phi+\frac{1}{2}\big(G(w_x,m;\varepsilon)-w_x\big)m^{(0)}\Big]_x, &0\leq x< \infty,\,\,0\leq t\leq T\vspace{4pt}\\
			(iii)&\displaystyle \mu(x,0)=0,\text{\hspace{.2cm}}w(x,T)=0, &0\leq x< \infty\vspace{4pt}\\
			(iv)&\displaystyle w(0,t)=\mu(0,t)=0, &0\leq t\leq T,
		\end{array}
	\end{cases}
\end{equation}
where we recall that $G$ is defined in \eqref{def:G}.

The subsequent lemma establishes properties of $\s{T}$ that are necessary for applying Theorem \ref{thm:Leray-Schauder}.

\begin{lemma}\label{lem:L-S_hypotheses}
	The operator $\s{T}$, defined in \eqref{def:L-S_operator}, is well-defined, continuous, and compact.
\end{lemma}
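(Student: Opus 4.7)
The plan is to verify the three properties in order, exploiting the crucial structural observation that the system \eqref{def:L-S_operator} is decoupled for a given input $(u,m)$: the function $G(u_x,m;\varepsilon)$ in equation (i) is a known function of $t$ alone, so (i) is solved first as a backward linear parabolic equation for $w$, after which (ii) is a Fokker--Planck equation for $\mu$ whose source depends on the already-computed $w$ and the input $m$. For well-definedness, I would first check that the source $\lambda\Psi+\lambda F^{(0)} G(u_x,m;\varepsilon)$ of (i) lies in $\s{C}^{\alpha,\alpha/2}$; the H\"older regularity in $t$ of $G$ reduces to that of $\int_0^\infty (u_x^{(0)}m+m^{(0)}u_x)\dif x$, which follows by splitting in $x$ on bounded and unbounded ranges and combining the pointwise H\"older regularity of $u_x,m$ with the $L_x^1$ control on $m, m^{(0)}$ and the spatial decay of the reference solution $(u^{(0)},m^{(0)})$. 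Classical parabolic theory (e.g.~\cite[Thm.~IV.5.2]{ladyzhenskaia1968linear}) then yields $w\in\s{C}^{2+\alpha,1+\alpha/2}$, and substituting $w$ into (ii) the same theory gives $\mu\in\s{C}^{2+\alpha,1+\alpha/2}$; the hypotheses $\Phi, x\Phi\in L_t^\infty(L_x^1)$ together with Proposition~\ref{prop:LtInftyLx1_moment_mu} yield $\mu\in L_t^\infty(L_x^1)$ with a uniform first moment bound. As a sanity check, at $\lambda=0$ both equations are homogeneous with zero terminal/initial data, so $\s{T}(u,m,0)=(0,0)$, as required by Theorem~\ref{thm:Leray-Schauder}.

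Continuity will follow from the linearity of $\s{T}(\cdot,\cdot,\lambda)$ in $(u,m)$ for each fixed $\lambda$ together with standard stability estimates for parabolic equations with respect to the coefficient $\lambda F^{(0)}$: applying the well-definedness estimates to the difference of two inputs with fixed $\lambda$ gives continuity in $(u,m)$ uniformly in $\lambda$, while continuity in $\lambda$ follows from the continuous dependence of solutions on their lower-order coefficients. For compactness, I would show that any bounded family of inputs produces outputs bounded in $\s{C}^{2+\alpha,1+\alpha/2}$ (strictly more regular than $\s{X}$), with uniform first moment control on $\mu$. On every compact rectangle $[0,R]\times[0,T]$ the embedding $\s{C}^{2+\alpha,1+\alpha/2}\hookrightarrow\s{C}^{\alpha,\alpha/2}$ is compact by Arzel\`a--Ascoli, and a diagonal argument in $R$ produces a subsequence converging locally in $\s{C}^{\alpha,\alpha/2}$ to some limit $(w,\mu)$. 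The uniform first-moment bound on $\mu_n$ controls the tails via $\int_R^\infty |\mu_n(x,t)|\dif x \leq R^{-1}\sup_n\enVert{x\mu_n}_{L_t^\infty(L_x^1)}$, upgrading local convergence to convergence in $L_t^\infty(L_x^1)$; an analogous tail-decay argument for $w_n$ and its spatial derivative (via the heat-kernel representation and the decay of the data) then yields convergence in the full $\s{X}$-norm.

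The main obstacle I anticipate is handling the unbounded spatial domain in the compactness step. Standard arguments on bounded domains do not directly give convergence in the $L_t^\infty(L_x^1)$ and H\"older norms on $[0,\infty)\times[0,T]$, so the tail-decay analysis for both components must be carried out carefully. The moment estimate of Proposition~\ref{prop:LtInftyLx1_moment_mu}(b) is the decisive tool for the $\mu$-component, while for $w$ one must extract decay at infinity from the structure of the backward parabolic equation---for example by maximum-principle barriers or an explicit heat-kernel representation combined with the decay of $\Psi$ and $F^{(0)}G$. A secondary technical point, which needs care but should be routine, is the verification of joint continuity in $(u,m,\lambda)$ near $\lambda = 0$, where the coupling and transport terms degenerate to zero.
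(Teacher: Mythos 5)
Your proposal follows the same overall strategy as the paper (decouple (i) and (ii) for a given input, apply classical Schauder estimates to get $\s{C}^{2+\alpha,1+\alpha/2}$ regularity, and use Proposition~\ref{prop:LtInftyLx1_moment_mu} for the $L^1_x$ and first-moment control on $\mu$), but it improves on two points. For continuity, you observe that $\s{T}(\cdot,\cdot,\lambda)$ is affine in $(u,m)$, so continuity reduces to boundedness, which is already contained in the well-definedness estimates; the paper instead runs a sequential Arzel\`a--Ascoli argument to identify the limit of a subsequence, which is more roundabout and implicitly relies on uniqueness to pass to the full sequence. For compactness, your concern about the unbounded spatial domain is a genuine one: local uniform convergence plus a uniform $\s{C}^{2+\alpha,1+\alpha/2}$ bound does \emph{not} imply convergence in the global sup norm (think $\sin(x/n)$ on $\intco{0,\infty}$), so some tail control on $w$ and $w_x$ is needed, exactly as you flag; the paper supplies the needed tail argument for $\mu$ via the first-moment bound but does not explicitly address the $w$-component. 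Your suggestion of extracting spatial decay of $w$ from the heat-kernel representation (the source $\Psi + F^{(0)}G$ consists of a decaying term $\Psi$ plus a term depending on $x$ only through $F^{(0)}$, which is controlled) or from maximum-principle barriers is the right way to close that gap. One small caution: the paper's claim that $\s{C}^{2,1}$ convergence plus uniform $\s{C}^{2+\alpha,1+\alpha/2}$ boundedness yields $\s{C}^{2+\alpha,1+\alpha/2}$ convergence is only correct for exponents $\alpha'<\alpha$; for landing in $\s{X}$ (which only requires $\s{C}^{\alpha,\alpha/2}$) this is immaterial, and your interpolation-style phrasing handles it correctly.
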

\begin{proof} We break the proof into three parts.
	Throughout the proof, we will refer to the results found in \cite{ladyzhenskaia1968linear}, especially Theorems IV.5.1-3, as ``classical results/estimates."
	\\\\
	1. \textit{Well-Definiteness.}
	To show that the operator $\s{T}$ is well-defined, suppose $m\in\s{X}$ and $\lambda\in[0,1]$.
	By Lemma \ref{lem:existence Holder} there exists a unique solution $w \in \s{C}^{2+\alpha,1+\alpha/2}$ to Equation \eqref{def:L-S_operator}(i) (with boundary conditions given in (iii)-(iv)).
	In particular, $G(w_x,m;\varepsilon)$ (which does not depend on $x$) is estimated in $\s{C}^{\alpha,\alpha/2}$, and hence via classical results, Equation \eqref{def:L-S_operator}(ii) has a unique solution $\mu$, which is estimated in $\s{C}^{2+\alpha,1+\alpha/2}$.
	In addition, as a consequence of Proposition~\ref{prop:LtInftyLx1_moment_mu} {\it{(a)}}, with 
	\begin{equation}
		b:=\lambda F^{(0)} \,\,\text{ and }\,\, \nu:=\lambda\Phi+\frac{1}{2}\lambda\big(G(w_x,m;\varepsilon)-w_x\big)m^{(0)},
	\end{equation}
	we find that $\mu\in L_t^\infty(L_x^1)$.
	Using Equation \eqref{eq:holder t L1x1}, we can deduce that $\mu \in \s{C}^{\alpha/2}\del{[0,T];(\s{C}^{1+2\alpha})^*}$, as well.
	This ultimately shows that $\mu\in\s{X}$, and hence $\s{T}$ is well-defined.
	\\\\
	2. \textit{Continuity.}
	To demonstrate continuity of the operator $\s{T}$, start by letting $\{(m_j,\lambda_j)\}_{j=1}^\infty\subseteq\s{X}\times[0,1]$ be a sequence such that $m_j\to m$ in $\s{X}$ and $\lambda_j\to\lambda$ as $j\to\infty$.
	Let $(w_j,\mu_j)$ be the solution to system \eqref{def:L-S_operator} with $m$ replaced by $m_j$ and $\lambda$ replaced by $\lambda_j$, so that $\mu_j:=\s{T}(m_j,\lambda_j)$.
	Via classical estimates (see also Lemma \ref{lem:existence Holder}), the sequences $\{w_j\}_{j=1}^\infty$ and $\{\mu_j\}_{j=1}^\infty$ are uniformly bounded in $\s{C}^{2+\alpha,1+\alpha/2}$, and therefore both $w_j,\mu_j$ and their derivatives are uniformly bounded and equicontinuous.
	By the Arzel\`{a}-Ascoli Theorem and diagonalization, there exists some $(w,\mu)\in\s{C}^{2+\alpha,1+\alpha/2}\times\s{C}^{2+\alpha,1+\alpha/2}$ and a subsequence $\{(w_{j_k},\mu_{j_k})\}_{k=1}^\infty$ such that
	\begin{equation}\label{ConvergenceInC_2_1}
		(w_{j_k},\mu_{j_k})\to(w,\mu)\text{ in }\s{C}^{2,1}\del{[0,R] \times [0,T]}\times\s{C}^{2,1}\del{[0,R] \times [0,T]}\text{ as }k\to\infty, \quad \forall R > 0.
	\end{equation}
	In particular, $w_{j_k},\mu_{j_k}$ and their derivatives converge pointwise and are uniformly bounded.
	Also, because we also have $m_j\to m$ in $L_t^\infty(L_x^1)$, we see that
	\begin{equation}
		G\del{(w_{j_k})_x,m_{j_k};\epsilon}(t) \to G\del{w_x,m;\epsilon}(t) \quad \forall t \in [0,T].
	\end{equation}
	Now let $k \to \infty$ in the system solved by $(w_{j_k},\mu_{j_k})$ to see that $(w,\mu)$ solves System \eqref{def:L-S_operator}, i.e.~$\mu = \s{T}(m,\lambda)$.
	Finally, by the argument that appears in the next step, we deduce that $\mu_{j_k} \to \mu$ in $\s{X}$.
	Hence $\s{T}(m_{j_k},\lambda_{j_k}) \to \s{T}(m,\lambda)$ in $\s{X}$.
	It follows that $\s{T}$ is continuous.
	\\\\
	3. \textit{Compactness.}
	Finally, to demonstrate compactness of the operator $\s{T}$, let $\{(m_j,\lambda_j)\}_{j=1}^\infty$ be any bounded sequence in $\s{X}\times[0,1]$.
	If again we let $(w_j,\mu_j)$ be the solution to system \eqref{def:L-S_operator} with $m$ replaced by $m_j$ and $\lambda$ replaced by $\lambda_j$,
	then as in the previous step we see that the sequences $\{w_j\}_{j=1}^\infty$ and $\{\mu_j\}_{j=1}^\infty$ are uniformly bounded in $\s{C}^{2+\alpha,1+\alpha/2}$,
	and there exists a subsequence $\{(w_{j_k},\mu_{j_k})\}_{k=1}^\infty$ such that \eqref{ConvergenceInC_2_1} holds.	
	It remains to show that $\mu_{j_k}\to \mu$ in $\s{X} = \s{C}^{\alpha,\alpha/2} \cap L_t^\infty(L_x^1)$.
		
	We will appeal to Proposition~\ref{prop:LtInftyLx1_moment_mu}.
	First, note that $\mu_{j_k}-\mu$ ($k=1,2,\dots$) satisfies \eqref{F-P_equation_mu} of Proposition~\ref{prop:LtInftyLx1_moment_mu} with
	\begin{equation}
		b_k :=(\lambda_{j_k}-\lambda)F^{(0)}
	\end{equation}
	and
	\begin{equation}
		\nu_k :=(\lambda_{j_k}-\lambda)\Phi+\frac{1}{2}\Big(\lambda_{j_k}G\big((w_{j_k})_x,m_{j_k};\varepsilon\big)-\lambda G(w_x,m;\varepsilon)-\big(\lambda_{j_k}(w_{j_k})_x-\lambda w\big)\Big)m^{(0)}.
	\end{equation}
	We have that $b_k$ is uniformly bounded in $L^\infty$; we need to show that $\nu_k$ and $x\nu_k$ are uniformly bounded in $L_t^\infty(L_x^1)$.
	By assumption, $\Phi\in L_t^\infty(L_x^1)$ and $x\Phi\in L_t^\infty(L_x^1)$.
	Next, by \eqref{ConvergenceInC_2_1}, there exists a constant $C_1\in(0,\infty)$ such that
	\begin{equation}
		\enVert[1]{\lambda_{j_k}w_{j_k}-\lambda u}_{\infty}+\enVert[1]{(\lambda_{j_k}w_{j_k})_x-\lambda w_x}_{\infty}\leq C_1\,\,\text{ for all }\,\, k\in\bb{N}.
	\end{equation}
	Also, by assumption, $\{m_{j_k}\}_{k=1}^\infty$ is uniformly bounded in $L_t^\infty(L_x^1)$, so there exists some constant $C_2\in(0,\infty)$ such that
	\begin{equation}
		\enVert[1]{\lambda_{j_k}m_{j_k}-\lambda m}_{L_t^\infty(L_x^1)}\leq C_2\,\,\text{ for all }\,\, k\in\bb{N}.
	\end{equation}
	It follows that
	\begin{align}
		\Big|\lambda_{j_k}G\big((w_{j_k})_x,&m_{j_k};\varepsilon\big)-\lambda G(w_x,m;\varepsilon)-\big(\lambda_{j_k}(w_{j_k})_x-\lambda w\big)\Big|
		\\[4pt]
		&=\envert{\beta(\varepsilon\xi)}\envert{\int_0^\infty\big(\lambda_{j_k}(w_{j_k})_x-\lambda w_x\big)m^{(0)}\dif x+\int_0^\infty u_x^{(0)}\big(\lambda_{j_k}m_{j_k}-\lambda m\big)\dif x}\nonumber
		\\[4pt]
		&\leq C_1+\enVert[1]{u_x^{(0)}}_{\infty}C_2\,\,\text{ for all }\,\,k\in\bb{N}.\nonumber
	\end{align}
	Note that to obtain the inequality we also used that $\int_0^\infty m^{(0)}\dif x\leq 1$.
	We now find that
	\begin{equation}
		\enVert{\nu_k}_{L_t^\infty(L_x^1)}\leq 2\enVert{\Phi}_{L_t^\infty(L_x^1)}+\frac{1}{2}\Big(C_1+\enVert[1]{u_x^{(0)}}_{\infty}C_2\Big)\sup_{t\in[0,T]}\int_0^\infty m^{(0)}(x,t)\dif x<M_1
	\end{equation}
	for some constant $M_1\in(0,\infty)$, independent of $k$.
	Also, Proposition~\ref{prop:LtInftyLx1_moment_mu} {\it{(b)}} ensures that $xm^{(0)}\in L_t^\infty(L_x^1)$, and thus
	\begin{equation}
		\enVert{x\nu_k}_{L_t^\infty(L_x^1)}\leq 2\enVert{x\Phi}_{L_t^\infty(L_x^1)}+\frac{1}{2}\Big(C_1+\enVert[1]{u_x^{(0)}}_{\infty}C_2\Big)\sup_{t\in[0,T]}\int_0^\infty xm^{(0)}(x,t)\dif x<M_2
	\end{equation}
	for some constant $M_2\in(0,\infty)$, independent of $k$.
	Therefore, the PDE that $\lambda_{j_k}\mu_{j_k}-\lambda\mu$ satisfies, namely \eqref{F-P_equation_mu}, also satisfies the hypotheses of Proposition~\ref{prop:LtInftyLx1_moment_mu} uniformly for all $k\in\bb{N}$.
	
	We then write
	\begin{align}
		\sup_{t\in[0,T]}\int_0^\infty\big|\mu_{j_k}(x,t)&-\mu(x,t)\big|\dif x
		\\[4pt]
		&=\sup_{t\in[0,T]}\Bigg[\int_0^K\envert{\mu_{j_k}(x,t)-\mu(x,t)}\dif x+\int_K^\infty\envert{\mu_{j_k}(x,t)-\mu(x,t)}\dif x\Bigg],\nonumber
	\end{align}
	By \eqref{ConvergenceInC_2_1}, for each fixed $K>0$,
	\begin{equation}
		\sup_{t\in[0,T]}\int_0^K\envert{\mu_{j_k}(x,t)-\mu(x,t)}\dif x\to0\,\,\text{ as }\,\, k\to\infty.
	\end{equation}
	By Proposition~\ref{prop:LtInftyLx1_moment_mu} {\it{(b)}},
	\begin{align}
		\sup_{t\in[0,T]}\int_K^\infty\envert{\mu_{j_k}(x,t)-\mu(x,t)}\dif x&\leq\sup_{t\in[0,T]}\frac{1}{K}\int_0^Kx\envert{\mu_{j_k}(x,t)-\mu(x,t)}\dif x
		\\[4pt]
		&\leq K^{-1}M_{\sigma,b}\big(\enVert{\nu_k}_{L_t^\infty(L_x^1)}+\enVert{x\nu_k}_{L_t^\infty(L_x^1)}\big)\\
		&\leq K^{-1}M_{\sigma,b}(M_1 + M_2)\to0\,\,\text{ as }\,\, K\to\infty\nonumber
	\end{align}
	for all $k\in\bb{N}$, and we see that $\mu_{j_k}\to \mu$ in $L_t^\infty(L_x^1)$.
	Similarly,  let $\bar \mu_{j_k} = \mu_{j_k} - \mu$.
	For all $t_1 \neq t_2$,  by taking $K = \abs{t_1-t_2}^{-1/2}$ we have, cf.~\eqref{eq:holder t L1x1},
	\begin{multline}
			\int_0^\infty \abs{\bar \mu_{j_k}(x,t_1) - \bar \mu_{j_k}(x,t_2)}\dif x\\
		\leq \enVert{\mu_{j_k} - \mu}_{\s{C}^{2,1}\del{[0,K] \times [0,T]}}\abs{t_1-t_2}^{1/2}
			+ 2\abs{t_1-t_2}^{1/2}\sup_t \int_0^\infty \abs{\mu_{j_k}(x,t) - \mu(x,t)}\dif x.
	\end{multline}
	From this we deduce that $\mu_{j_k} \to \mu$ in $\s{C}^{1/2}([0,T];L^1)$, hence also in $\s{C}^{\alpha/2}\del{[0,T];(\s{C}^{1+2\alpha})^*}$.
	
	Moreover, by \cite[Lemma 2.2]{cesaroni2018concentration}, we have
	\begin{equation}
		\sup_{t \in [0,T]}\sup_{x \geq K} \envert{\mu_{j_k}(x,t)-\mu(x,t)}
		\leq C\del{K^{-1}M_{\sigma,b}(M_1 + M_2)}^{\frac{\alpha}{\alpha+1}},
	\end{equation}
	where $C$ depends only on $\sup_k \enVert{\mu_{j_k} - \mu}_{\s{C}^{\alpha,\alpha/2}}$.
	Combining this estimate with \eqref{ConvergenceInC_2_1}, we deduce that $\mu_{j_k} \to \mu$ uniformly.
	Since $\{\mu_{j_k}\}$ is bounded in $\s{C}^{2+\alpha,1+\alpha/2}$, we deduce that $\mu_{j_k} \to \mu$ in $\s{C}^{\alpha,\alpha/2}$.
	Ultimately, we have that
	\begin{equation}
		\mu_{j_k}\to \mu\,\,\text{ in }\,\, \s{X} \,\,\text{ as }\,\, k\to\infty,
	\end{equation}
	as desired.
	We have thus shown that the operator $\s{T}$ is continuous and compact.
	 This completes the proof.
\end{proof}

With Proposition ~\ref{lem:L-S_hypotheses} established, we may now appeal to the Leray-Schauder fixed point theorem to obtain an existence result for System \eqref{abstract_system}.

\begin{theorem}\label{thm:existence_result_w/mu}
	There exists a unique classical solution $(w,\mu)$ to System \eqref{abstract_system} provided $\varepsilon_0 > 0$ satisfies \eqref{Assumption:epsilon_small} and $\varepsilon\in[0,\varepsilon_0]$.
\end{theorem}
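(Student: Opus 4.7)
The plan is to apply the Leray--Schauder fixed point theorem (Theorem~\ref{thm:Leray-Schauder}) to the operator $\s{T}$ defined in \eqref{def:L-S_operator}. The hypotheses that $\s{T}$ is continuous and compact on $\s{X}\times[0,1]$ are already supplied by Lemma~\ref{lem:L-S_hypotheses}. To verify the remaining hypothesis $\s{T}(u,m,0)=(0,0)$, one simply observes that at $\lambda=0$ both equations in \eqref{def:L-S_operator} become homogeneous linear parabolic equations with zero boundary data and zero initial/terminal data, so by classical uniqueness the only solution is the trivial one.

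The main task, and the only substantive remaining step, is to establish a $\lambda$-uniform a priori bound on any fixed point $(w,\mu)=\s{T}(w,\mu,\lambda)$. The key observation is that any such fixed point solves a variant of System~\eqref{abstract_system} in which $F^{(0)}$, $\Psi$, and $\Phi$ are replaced by $\lambda F^{(0)}$, $\lambda\Psi$, and $\lambda\Phi$, while the nonlocal functional $G$ is unchanged. Because $\enVert{\lambda F^{(0)}}_\infty\leq\enVert{F^{(0)}}_\infty$ for every $\lambda\in[0,1]$, Assumption~\ref{Assumption:epsilon_small} remains in force for this rescaled coefficient. Consequently, all of the a priori estimates of Section~\ref{sec:a_priori}---and in particular Theorem~\ref{thm:HolderRegularity_w/mu}---apply verbatim to each such fixed point, yielding
\begin{equation*}
\enVert{w}_{\s{C}^{2+\alpha,1+\alpha/2}}+\enVert{\mu}_{\s{C}^{2+\alpha,1+\alpha/2}}\leq C,
\end{equation*}
with $C$ depending on the data but independent of $\lambda$. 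By continuous embedding this controls $\enVert{w}_{\s{C}^{\alpha,\alpha/2}}$, $\enVert{w_x}_{\s{C}^{\alpha,\alpha/2}}$ and $\enVert{\mu}_{\s{C}^{\alpha,\alpha/2}}$; the missing $\enVert{\mu}_{L_t^\infty(L_x^1)}$ component is supplied by Proposition~\ref{prop:LtInftyLx1_moment_mu}(a) applied to the Fokker--Planck equation \eqref{def:L-S_operator}(ii), whose source term is bounded uniformly in $\lambda$ by the Hölder bound on $(w,\mu)$ just obtained. This gives the required $\lambda$-uniform bound $\enVert{(w,\mu)}_{\s{X}}\leq C$, and Leray--Schauder produces a fixed point of $\s{T}(\cdot,\cdot,1)$, i.e.~a solution of \eqref{abstract_system}.

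Uniqueness is then automatic from the linearity of \eqref{abstract_system}: the difference of two solutions solves the same system with $\Psi\equiv 0$ and $\Phi\equiv 0$, so the final clause of Theorem~\ref{thm:HolderRegularity_w/mu} delivers $\kappa=0$ and forces the difference to vanish identically. The only delicate point in executing this plan is confirming that the Section~\ref{sec:a_priori} estimates really are uniform in $\lambda$; this reduces to checking, in each of Propositions~\ref{prop:L2timeL1space_mu},~\ref{prop:EnergyEstimate_w},~\ref{prop:HolderEstimates_mu},~\ref{prop:HolderEstimates_mu_2} and Theorem~\ref{thm:HolderRegularity_w/mu}, that $F^{(0)}$ enters only through quantities (its $L^\infty$ norm and linear multiplicative appearances in coefficients) that behave monotonically under the rescaling $F^{(0)}\mapsto\lambda F^{(0)}$. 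This structural check is the main verification required, and once done it closes the argument.
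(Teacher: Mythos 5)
Your overall strategy matches the paper's exactly: Leray--Schauder via Lemma~\ref{lem:L-S_hypotheses}, an a priori $\s{X}$-bound for $\lambda$-fixed points from the Section~\ref{sec:a_priori} estimates plus Proposition~\ref{prop:LtInftyLx1_moment_mu}(a), and uniqueness via linearity and the $\kappa=0$ case of Theorem~\ref{thm:HolderRegularity_w/mu}. The paper is terse about why the Section~\ref{sec:a_priori} estimates apply to a $\lambda$-fixed point, and you try to supply a justification; unfortunately your proposed reformulation of the fixed-point equation is incorrect, and that is a genuine gap.

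Concretely, you claim that a fixed point $(w,\mu)=\s{T}(w,\mu,\lambda)$ solves System~\eqref{abstract_system} with $(F^{(0)},\Psi,\Phi)$ replaced by $(\lambda F^{(0)},\lambda\Psi,\lambda\Phi)$ and $G$ unchanged. But inspecting \eqref{def:L-S_operator}(ii), the factor $\lambda$ multiplies the \emph{entire} bracket on the right-hand side, giving $\big[\lambda\Phi+\tfrac{\lambda}{2}(G-w_x)m^{(0)}\big]_x$, whereas your proposed substitution yields $\big[\lambda\Phi+\tfrac{1}{2}(G-w_x)m^{(0)}\big]_x$. These are different systems; in particular, the coupling term $\tfrac{1}{2}(G-w_x)m^{(0)}$ must also carry a factor $\lambda$, and no choice of rescaled data $(F^{(0)},\Psi,\Phi)$ can produce that $\lambda$ while leaving $G$ untouched, since that $\tfrac{1}{2}$ is a fixed structural constant of \eqref{abstract_system}, not a datum. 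As a result, the assertion that the Section~\ref{sec:a_priori} estimates "apply verbatim" to your rescaled system does not deliver an a priori bound for the actual $\lambda$-fixed points, and the subsequent "structural check" you describe (monotonicity under $F^{(0)}\mapsto\lambda F^{(0)}$) is aimed at the wrong target.

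What is actually needed — and what the paper implicitly relies on — is to rerun the Section~\ref{sec:a_priori} estimates directly on the $\lambda$-fixed-point system, where \emph{both} coupling terms $\lambda F^{(0)}(G-w_x)$ in (i) and $\tfrac{\lambda}{2}(G-w_x)m^{(0)}$ in (ii) carry the same $\lambda$. This symmetric placement is exactly what preserves the duality cancellation in $\od{}{t}\ip{w}{\mu}$ underlying Proposition~\ref{prop:EnergyEstimate_w}: after integrating by parts, every term picks up the common factor $\lambda\in[0,1]$, the crucial negative term $-\tfrac{\lambda}{2}\int w_x^2 m^{(0)}$ survives, and the resulting bound is uniform in $\lambda$ since $\lambda\le 1$. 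Likewise $b=\lambda F^{(0)}$ and $\nu=\lambda\big[\Phi+\tfrac12(G-w_x)m^{(0)}\big]$ satisfy $\enVert{b}_\infty\le\enVert{F^{(0)}}_\infty$ and the $L^1$ bound on $\nu$ needed for Propositions~\ref{prop:L2timeL1space_mu} and~\ref{prop:LtInftyLx1_moment_mu}. Stated this way the conclusion is correct; stated via your rescaling it is not.
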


\begin{proof}
	We endeavor to use the Leray-Schauder fixed point theorem, i.e., Theorem \ref{thm:Leray-Schauder}.
	For that purpose, a quick inspection reveals that when $\lambda=0$, we have $\s{T}(\mu,0)=0$ for all $m\in \s{X}$.
	Furthermore, Lemma~\ref{lem:L-S_hypotheses} shows that $\s{T}$ satisfies the compactness requirement of the Leray-Schauder fixed point theorem.
	Now, suppose that there exists $\mu \in X$ such that $\mu=\s{T}(\mu,\lambda)$, so that \eqref{def:L-S_operator} has a solution $(w,\mu)$ with $\mu = m$.
	Then via the a priori estimates established in Section~\ref{sec:a_priori}, specifically the regularity results of Theorem~\ref{thm:HolderRegularity_w/mu} and Proposition~\ref{prop:LtInftyLx1_moment_mu} {\it{(a)}}, we find that indeed,
	\begin{equation}\label{a_priori_bound_LS}
		\enVert{\mu}_{\s{X}}\leq C
	\end{equation}
	for some constant $C\in(0,\infty)$. 
	We conclude that there exists a fixed point of the operator $\s{T}$, corresponding to $\lambda=1$.
	We deduce that there exists a solution to \eqref{abstract_system}, as wanted.
	
	To show uniqueness, let $(w_1,\mu_1)$ and $(w_2,\mu_2)$ be solutions to \eqref{abstract_system}. Consider the difference $(w,\mu):=(w_1-w_2, \mu_1-\mu_2)$. Then because System \eqref{abstract_system} is linear, we find that $(w,\mu)$ satisfies System \eqref{abstract_system} with $\Psi$ and $\Phi$ identically 0. As such, Theorem \ref{thm:HolderRegularity_w/mu} implies
	\begin{equation}
		\enVert{w}_{\s{C}^{2+\alpha,1+\alpha/2}}=\enVert{\mu}_{\s{C}^{2+\alpha,1+\alpha/2}}\leq 0,
	\end{equation}
	and subsequently $w=\mu=0$. Therefore, $w_1=w_2$ and $\mu_1=\mu_2$, hence the solution to System \eqref{abstract_system} is unique. This completes the proof.
\end{proof}

\section{Proof of Theorem \ref{thm:ExistenceUniquenessEtc_un/mn}}\label{sec:Induction}

We now present the proof of Theorem~\ref{thm:ExistenceUniquenessEtc_un/mn}.

\begin{proof}[Proof of Theorem \ref{thm:ExistenceUniquenessEtc_un/mn}]
	We induct on $k$ and break the proof into three separate inductive arguments.\\\\
	\textit{Step 1.}  Let $n\in\bb{N}$ and suppose the following induction hypothesis.
		\begin{equation*}
			\parbox{12cm}{For all $k\in\{0,1,\dots,n-1\}$ assume there exists  a solution $(u^{(k)}, m^{(k)})$ to \eqref{un-mnsystem}, as well as constants $C_k, C_k^\ast\in(0,\infty)$, depending solely on the data and on $(u^{(j)}, m^{(j)})$ for $j < k$, such that $u^{(k)}$ and $m^{(k)}$ satisfy}
		\end{equation*}
		\begin{equation}\label{InductionHypothesis}
			\enVert[1]{u^{(k)}}_{\s{C}^{2+\alpha,1+\alpha/2}}+\enVert[1]{m^{(k)}}_{\s{C}^{2+\alpha,1+\alpha/2}}\leq C_{k} \,\,\text{ and }\,\, \enVert[1]{m^{(k)}}_{\s{C}^{1/2}\del{[0,T];L^1}} + \enVert{xm^{(k)}}_{L^\infty_t(L_x^1)} \leq C_{k}^\ast.
		\end{equation}
		As a consequence of the inductive hypothesis, for all $k\in\bb{N}$ with $0\leq k<n$, we can estimate
		\begin{equation}\label{FkHolderEstimate}
			\enVert[1]{F^{(k)}}_{\s{C}^{1+\alpha,\alpha/2}}\leq C_{F_k}<\infty,
		\end{equation}
		where we define the constant
		\begin{equation}\label{def:C_Fk}
			C_{F_k}:=k!\Big(1+\sum_{i=0}^k\sum_{j=0}^{k-i}C_{j}C_{k-i-j}^\ast\Big)+C_{k}.
		\end{equation}
		In order to apply our abstract results from Section \ref{sec:L-S_Theorem}, we introduce the following notation.
		We will separate out the $n$-th order terms appearing in $J_n$ and $K_n$ by defining $\widetilde{J}_n$ and $\widetilde{K}_n$ implicitly as
		\begin{equation}
			J_n=\widetilde{J}_n+F^{(0)}\Bigg(\beta(\varepsilon\xi)\int_0^\infty \Big(u_x^{(0)}m^{(n)}+u_x^{(n)}m^{(0)}\Big)\dif x-u_x^{(n)}\Bigg)
		\end{equation}
		and
		\begin{equation}
			K_n=\widetilde{K}_n+\frac{1}{2}\Bigg(\beta(\varepsilon\xi)\int_0^\infty \Big(u_x^{(0)}m^{(n)}+u_x^{(n)}m^{(0)}\Big)\dif x-u_x^{(n)}\Bigg)m^{(0)}.
		\end{equation}
		\begin{comment}
		Similarly, we define (with some abuse of notation) $\widetilde{F}^{(n)}$ by
		\begin{equation}
			F^{(n)} =: \widetilde{F}^{(n)}+\beta(\varepsilon\xi)\int_0^\infty \Big(u_x^{(0)}m^{(n)}+u_x^{(n)}m^{(0)}\Big)\dif x-u_x^{(n)}.
		\end{equation}
		\end{comment}
		
		While the exact formulas of $\widetilde{J}_n$ and $\widetilde{K}_n$
		 can be easily gleaned from \eqref{def:Jk}, \eqref{def:Kk}, and \eqref{def:Fk}, it is more important to know that $\widetilde{J}_n$ and $\widetilde{K}_n$ consist entirely of lower order terms, and hence ``known" quantities in light of the induction hypothesis (see \eqref{InductionHypothesis} in Section \ref{sec:Induction}).
		
		\begin{comment}
		We now estimate
		\begin{equation}\label{FnTildeHolderEstimate}
			\enVert[1]{\widetilde{F}^{(n)}}_{\s{C}^{\alpha,\alpha/2}}\leq n!\Big(1+\sum_{i=1}^n\sum_{j=0}^{n}C_{j}C_{n-i-j}^\ast+\sum_{j=1}^{n-1}C_{j}C_{n-j}^\ast\Big)<\infty.
		\end{equation}
		\end{comment}
		Using \eqref{FkHolderEstimate} and recalling the formulas for $J_n$ and $K_n$ in \eqref{def:Jk} and \eqref{def:Kk}, we may estimate
		\begin{equation}\label{JnTildeHolderEstimate}
			\enVert[1]{\widetilde{J}_n}_{\s{C}^{\alpha,\alpha/2}}\leq\sum_{k=1}^{n-1}\binom{n}{k}C_kC_{n-k}+2n!\Big(1+\sum_{i=1}^n\sum_{j=0}^{n}C_{j}C_{n-i-j}^\ast+\sum_{j=1}^{n-1}C_{j}C_{n-j}^\ast\Big)<\infty,
		\end{equation}
		and
		\begin{equation}\label{KnTildeL1Estimate}
			\int_0^\infty(1+x)\big|\widetilde{K}_n(x,t;\varepsilon)\big|\dif x\leq\sum_{k=1}^{n-1}\binom{n}{k} C_{F_k}C_{n-k}^\ast+n!\Big(1+\sum_{i=1}^n\sum_{j=0}^{n}C_{j}C_{n-i-j}^\ast+\sum_{j=1}^{n-1}C_{j}C_{n-j}^\ast\Big)<\infty.
		\end{equation}
		Also, using the product rule, we find that
		\begin{align}
			\big(\widetilde{K}_{n}\big)_x&=\sum_{k=1}^{n-1}\binom{n}{k}\Big(F_x^{(k)}m^{(n-k)}+F^{(k)}m_x^{(n-k)}\Big)+\widetilde{F}_x^{(n)}m^{(0)}+\widetilde{F}^{(n)}m_x^{(0)}
			\\[4pt]
			&=\sum_{k=1}^{n-1}\binom{n}{k}\Big(-u_{xx}^{(k)}m^{(n-k)}+F^{(k)}m_x^{(n-k)}\Big)+\widetilde{F}^{(n)}m_x^{(0)},\nonumber
		\end{align}
		so that
		\begin{equation}\label{KnTildeHolderEstimate}
			\enVert[1]{(\widetilde{K}_n)_x}_{\s{C}^{\alpha,\alpha/2}}\leq\sum_{k=1}^{n-1}\binom{n}{k}C_k^2+C_{F_k}C_k+n!\Big(1+\sum_{i=1}^n\sum_{j=0}^{n}C_{j}C_{n-i-j}^\ast+\sum_{j=1}^{n-1}C_{j}C_{n-j}^\ast\Big)C_0<\infty.
		\end{equation}
		Now, the base case, $k=0$, is established in \cite{graber2021nonlocal} (cf.~\cite{graber2018existence}).
		(Note that $\enVert[1]{m^{(0)}}_{\s{C}^{1/2}([0,T];L^1)}\leq C_{0}^\ast$ can be established, even on an unbounded domain, by the same argument as in the proof of Theorem \ref{thm:HolderRegularity_w/mu}, cf.~Equation \eqref{eq:holder t L1x1}.)
		Assume then that the induction hypothesis holds for $k = n-1$.
		Observe that System \eqref{un-mnsystem} can be written as System \eqref{abstract_system} with $w:=u^{(n)}$, $\mu:=m^{(n)}$, $\Psi:=\widetilde{J}_n$, and $\Phi:=\widetilde{K}_n$. Moreover, \eqref{JnTildeHolderEstimate}, \eqref{KnTildeL1Estimate}, and \eqref{KnTildeHolderEstimate} show that the induction hypothesis guarantees $\Psi\in \s{C}^{\alpha,\alpha/2}$ and $\Phi\in\s{C}^{1+\alpha,\alpha/2}\cap L_t^\infty(L_x^1)$. Therefore, by Theorem \ref{thm:existence_result_w/mu}, there exists a unique solution to System \eqref{un-mnsystem}, which by Theorem \ref{thm:HolderRegularity_w/mu} must satisfy \eqref{InductionHypothesis} for $k=n$.
		
		\begin{remark}
			\label{rem:constants}
			The constants $C_n$ derived from this inductive argument are likely to blow up very quickly.
			Assuming they are greater than 1, they will be an increasing sequence, and an inspection of the argument above reveals that
			\begin{equation}
				C_n \gtrsim 2^n C_{n-1}^2. 
			\end{equation}
			This in turn implies that the ratio $C_n/C_{n-1}$ converges to $+\infty$, which means $\sum C_n x^n$ does not converge on any radius.
		\end{remark}
		\textit{Step 2.} Next, let $n\in\bb{N}$ and suppose the following induction hypothesis.
		\begin{equation*}
			\parbox{12cm}{Assume the induction hypothesis of \textit{Step 1} holds. Aditionally, for all $k\in\{0,1,\dots,n-1\}$, suppose there exist constants $C_k',(C_k^\ast)'\in(0,\infty)$ depending solely on lower order terms and the data such that}
		\end{equation*}
		\begin{equation}\label{InductionHypothesis2}
			\begin{array}{c}
				\enVert[1]{u^{(k)}(\varepsilon+h)-u^{(k)}(\varepsilon)}_{\s{C}^{2+\alpha,1+\alpha/2}}\leq C_k'h,
				\vspace{12pt}\\
				\enVert[1]{m^{(k)}(\varepsilon+h)-m^{(k)}(\varepsilon)}_{\s{C}^{2+\alpha,1+\alpha/2}}\leq C_k'h,
				\vspace{12pt}\\
				\enVert[1]{m^{(k)}(\varepsilon+h)-m^{(k)}(\varepsilon)}_{\s{C}^{1/2}([0,T];L^1)} + \enVert[1]{x\del{m^{(k)}(\varepsilon+h)-m^{(k)}(\varepsilon)}}_{L^\infty_t(L_x^1)}\leq (C_k^*)'h.
			\end{array}
		\end{equation}
		For $k=0,1,\dots, n$ define
		\begin{equation}
			w^{(k)}:=u^{(k)}(\varepsilon+h)-u^{(k)}(\varepsilon)\,\,\text{ and }\,\,\mu^{(k)}:=m^{(k)}(\varepsilon+h)-m^{(k)}(\varepsilon).
		\end{equation}
		In the case $k=n$, it can be shown that $w^{(n)}$ and $\mu^{(n)}$ satisfy System \eqref{abstract_system} with $w:=w^{(n)}$, $\mu:=\mu^{(n)}$,
		\begin{equation}
			\Psi:=\Psi_1+2F^{(0)}(\varepsilon)I_1(\varepsilon), \,\,\text{ and }\,\, \Phi:=\Phi_1+I_1(\varepsilon)m^{(0)}(\varepsilon),
		\end{equation}
		where
		\begin{align}
			I_1(\varepsilon)&:=\frac{1}{2}\big(\xi^n\alpha^{(n)}\big((\varepsilon+h)\xi\big)-\xi^n\alpha^{(n)}(\varepsilon\xi)\big)
			\\[4pt]
			&\quad+\frac{1}{2}\int_0^\infty\sum_{i=1}^n\sum_{j=0}^{n-i}\binom{n}{i}\binom{n-i}{j}\Bigg\{\xi^i\beta^{(i)}(\varepsilon\xi)\Big(u_x^{(j)}(\varepsilon+h)\mu^{(n-i-j)}+w_x^{(j)}m^{(n-i-j)}(\varepsilon)\Big)\nonumber
			\\[4pt]
			&\quad+\xi^i\Big(\beta^{(i)}\big((\varepsilon+h)\xi\big)-\beta^{(i)}(\varepsilon\xi)\Big)u_x^{(j)}(\varepsilon+h)m^{(n-i-j)}(\varepsilon+h)\Bigg\}\dif x\nonumber
			\\[4pt]
			&\quad+\frac{1}{2}\int_0^\infty\sum_{j=1}^{n-1}\binom{n}{j}\beta(\varepsilon\xi)\Big(u_x^{(j)}(\varepsilon+h)\mu^{(n-j)}+w_x^{(j)}m^{(n-j)}(\varepsilon)\Big)\dif x\nonumber
			\\[4pt]
			&\quad+\frac{1}{2}\int_0^\infty\beta(\varepsilon\xi)\Big(w_x^{(0)}m^{(n)}(\varepsilon)+u_x^{(n)}(\varepsilon+h)\mu^{(0)}\Big)+\Big(u_x^{(0)}(\varepsilon+h)-u_x^{(0)}(\varepsilon)\Big)\mu^{(n)}\dif x,\nonumber
		\end{align}
		\begin{align}
			\Psi_1&:=\sum_{k=1}^n\binom{n}{k}\Big(F^{(k)}(\varepsilon+h)+F^{(k)}(\varepsilon)\Big)\Big(F^{(n-k)}(\varepsilon+h)-F^{(n-k)}(\varepsilon)\Big)
			\\[4pt]
			&\quad+\Big(F^{(0)}(\varepsilon+h)-F^{(0)}(\varepsilon)\Big)\Big(F^{(n)}(\varepsilon+h)-F^{(n)}(\varepsilon)\Big),\nonumber
		\end{align}
		and
		\begin{align}
			\Phi_1&:=\sum_{k=1}^{n-1}\binom{n}{k}\Bigg\{F^{(k)}(\varepsilon+h)\mu^{(n-k)}+\big(F^{(k)}(\varepsilon+h)-F^{(k)}(\varepsilon)\big)m^{(n-k)}(\varepsilon)\Bigg\}
			\\[4pt]
			&\quad+\Big(F^{(0)}(\varepsilon+h)-F^{(0)}(\varepsilon)\Big)m^{(n)}(\varepsilon)+\Big(F^{(0)}(\varepsilon+h)-F^{(0)}(\varepsilon)\Big)\mu^{(n)}+F^{(n)}(\varepsilon+h)\mu^{(0)}.\nonumber
		\end{align}
		A quick inspection reveals that the induction hypothesis implies $\enVert{\Psi}_{\s{C}^{\alpha,\alpha/2}}$ and $\enVert{\Phi}_{\s{C}^{1+\alpha,\alpha/2}}$ are $O(h)$. As such, the estimates in Theorem~\ref{thm:HolderRegularity_w/mu} show that $u^{(n)}$ and $m^{(n)}$ are $O(h)$ as well.\\\\
		\textit{Step 3.} Let $n\in\bb{N}$ and suppose the following induction hypothesis.
		\begin{equation*}
			\parbox{12cm}{Assume the induction hypothesis of \textit{Step 2} holds. Additionally, suppose there exist constants $C_k'', (C_k^\ast)''\in(0,\infty)$ depending solely on lower order terms and the data such that}
		\end{equation*}
		\begin{equation}
			\begin{array}{c}
				\enVert[1]{u^{(k)}(\varepsilon+h)-u^{(k)}(\varepsilon)-hu^{(k+1)}(\varepsilon)}_{\s{C}^{2+\alpha,1+\alpha/2}}\leq C_k''h^2,
				\vspace{12pt}\\
				\enVert[1]{m^{(k)}(\varepsilon+h)-m^{(k)}(\varepsilon)-hm^{(k+1)}(\varepsilon)}_{\s{C}^{2+\alpha,1+\alpha/2}}\leq C_k''h^2,
				\vspace{12pt}\\
				\enVert[1]{m^{(k)}(\varepsilon+h)-m^{(k)}(\varepsilon)-hm^{(k+1)}(\varepsilon)}_{\s{C}^{1/2}([0,T];L^1)}\leq (C_k^\ast)''h^2,\vspace{12pt}\\
				\enVert[1]{x\del{m^{(k)}(\varepsilon+h)-m^{(k)}(\varepsilon)-hm^{(k+1)}(\varepsilon)}}_{L^\infty_t(L_x^1)}\leq (C_k^\ast)''h^2
			\end{array}
		\end{equation}
		For $k=0,1,\dots, n$ define
		\begin{equation}
			w^{(k)}:=u^{(k)}(\varepsilon+h)-u^{(k)}(\varepsilon)-hu^{(k+1)}(\varepsilon)\,\,\text{ and }\,\,\mu^{(k)}:=\mu^{(k)}(\varepsilon+h)-\mu^{(k)}(\varepsilon)-h\mu^{(k+1)}(\varepsilon).
		\end{equation}
		Note that $u^{(n+1)}$ and $m^{(n+1)}$ are well-defined objects via \textit{Step 1}. As it happens, when $k=n$ the pair $(w^{(n)}, \mu^{(n)})$ satisfies System \eqref{abstract_system} with $w:=w^{(n)}$, $\mu:=\mu^{(n)}$,
		\begin{equation}
			\Psi:=\Psi_2+2F^{(0)}(\varepsilon)I_2(\varepsilon), \,\,\text{ and }\,\, 	\Phi:=\Phi_2+I_2(\varepsilon)m^{(0)}(\varepsilon),
		\end{equation}
		where
		\begin{align}
			I_2(\varepsilon)&:=\frac{1}{2}\Big(\xi^n\alpha^{(n)}\big((\varepsilon+h)\xi\big)-\xi^n\alpha^{(n)}(\varepsilon\xi)-h\xi^{(n+1)}\alpha^{(n+1)}(\varepsilon\xi)\Big)
			\\[4pt]
			&\quad+\frac{1}{2}\int_0^\infty\sum_{i=1}^n\sum_{j=0}^{n-i}\binom{n}{i}\binom{n-i}{j}\Bigg\lbrace\xi^i\beta^{(i)}(\varepsilon\xi)\Big(u_x^{(j)}(\varepsilon)\mu^{n-i-j}+w_x^{(j)}m^{(n-i-j)}(\varepsilon)\Big)\nonumber
			\\[4pt]
			&\quad+\Big[\xi^i\beta^{(i)}\big((\varepsilon+h)\xi\big)-\xi^i\beta^{(i)}(\varepsilon\xi)-h\xi^{i+1}\beta^{(i+1)}(\varepsilon\xi)\Big]u_x^{(j)}(\varepsilon)m^{(n-i-j)}(\varepsilon)\nonumber
			\\[4pt]
			&\quad+\Big[\xi^i\beta^{(i)}\big((\varepsilon+h)\xi\big)u_x^{(j)}(\varepsilon+h)-\xi^i\beta^{(i)}(\varepsilon\xi)u_x^{(j)}(\varepsilon)\Big]\Big[m^{(n-i-j)}(\varepsilon+h)-m^{(n-i-j)}(\varepsilon)\Big]\nonumber
			\\[4pt]
			&\quad+\Big[\xi^i\beta^{(i)}\big((\varepsilon+h)\xi\big)-\xi^i\beta^{(i)}(\varepsilon\xi)\Big]\Big[u_x^{(j)}(\varepsilon+h)-u_x^{(j)}(\varepsilon)\Big]m^{(n-i-j)}(\varepsilon)\Bigg\rbrace\dif x\nonumber
			\\[4pt]
			&\quad+\frac{1}{2}\int_0^\infty\sum_{j=0}^{n}\binom{n}{j}\Bigg\lbrace\Big[\beta\big((\varepsilon+h)\xi\big)-\beta(\varepsilon\xi)-h\xi\beta^{(1)}(\varepsilon\xi)\Big]u_x^{(j)}(\varepsilon)m^{(n-j)}(\varepsilon)\nonumber
			\\[4pt]
			&\quad+\Big[\beta\big((\varepsilon+h)\xi\big)u_x^{(j)}(\varepsilon+h)-\beta(\varepsilon\xi)u_x^{(j)}(\varepsilon)\Big]\Big[m^{(n-j)}(\varepsilon+h)-m^{(n-j)}(\varepsilon)\Big]\nonumber
			\\[4pt]
			&\quad+\Big[\beta\big((\varepsilon+h)\xi\big)-\beta(\varepsilon\xi)\Big]\Big[u_x^{(j)}(\varepsilon+h)-u_x^{(j)}(\varepsilon)\Big]m^{(n-j)}(\varepsilon)\Bigg\rbrace\dif x\nonumber
			\\[4pt]
			&\quad+\frac{1}{2}\int_0^\infty\sum_{j=1}^{n-1}\binom{n}{j}\beta(\varepsilon\xi)\Big(u_x^{(j)}(\varepsilon)\mu^{n-j}+w_x^{(j)}m^{(n-j)}(\varepsilon)\Big)\dif x\nonumber
			\\[4pt]
			&\quad+\frac{1}{2}\int_0^\infty\beta(\varepsilon\xi)\Big(u_x^{(n)}\mu^{(0)}+w_x^{(0)}m^{(n)}\Big)\dif x\nonumber
		\end{align}
		\begin{align}
			\Psi_2&:=2\sum_{k=0}^{n-1}\binom{n}{k}\Big(F^{(k)}(\varepsilon+h)-F^{(k)}(\varepsilon)-hF^{(k+1)}(\varepsilon)\Big)F^{(n-k)}(\varepsilon)
			\\[4pt]
			&\quad+\sum_{k=0}^n\binom{n}{k}\Big(F^{(k)}(\varepsilon+h)-F^{(k)}(\varepsilon)\Big)\Big(F^{(n-k)}(\varepsilon+h)-F^{(n-k)}(\varepsilon)\Big),\nonumber
		\end{align}
		and
		\begin{align}
			\Phi_2&:=\sum_{k=0}^{n}\binom{n}{k}\Big(F^{(k)}(\varepsilon+h)-F^{(k)}(\varepsilon)-hF^{(k+1)}(\varepsilon)\Big)\Big(m^{(n-k)}(\varepsilon+h)-m^{(n-k)}(\varepsilon)\Big)
			\\[4pt]
			&\quad+h\sum_{k=0}^{n}\binom{n}{k}F^{(k+1)}(\varepsilon)\Big(m^{(n-k)}(\varepsilon+h)-m^{(n-k)}(\varepsilon)\Big)+\sum_{k=1}^n\binom{n}{k}F^{(k)}(\varepsilon)\mu^{(n-k)}\nonumber
			\\[4pt]
			&\quad+\sum_{k=0}^{n-1}\binom{n}{k}\Big(F^{(k)}(\varepsilon+h)-F^{(k)}(\varepsilon)-hF^{(k+1)}(\varepsilon)\Big)m^{(n-k)}(\varepsilon).\nonumber
		\end{align}
		As such, we find that the induction hypothesis together with \textit{Step 2} imply that $\enVert{\Psi}_{\s{C}^{\alpha,\alpha/2}}$ and $\enVert{\Phi}_{\s{C}^{1+\alpha,\alpha/2}}$ are $O(h^2)$. Using the estimates in Theorem \ref{thm:HolderRegularity_w/mu}, we ultimately see that $u^{(n)}$ and $m^{(n)}$ are $O(h^2)$. This yields the desired result concerning differentiability with respect to $\varepsilon$.
\end{proof}

\appendix

\section{Some technical proofs of results from Section \ref{sec:a_priori}} \label{ap:a priori}

\subsection{Proofs for Section \ref{sec:a priori F-P}} \label{ap:a priori F-P}

\begin{proof}[Proof of Proposition \ref{prop:LtInftyLx1_moment_mu}]
	Let $S(x,t)$ denote the heat kernel, given by
	\begin{equation}\label{heat_kernel}
		S(x,t) := (2\sigma^2 \pi t)^{-1/2}\exp\cbr{-\frac{x^2}{2\sigma^2 t}}.
	\end{equation}
	Using Duhamel's principle and integration by parts, we have $\mu = \mu_1 + \mu_2$, where
	\begin{equation}\label{Duhamel+IBP_mu}
		\begin{split}
			\mu_1(x,t)&=\int_0^t\int_0^\infty\big(S(x-y,t-s)-S(x+y,t-s)\big)\big((b\mu)_y(y,s)+\nu_y(y,s)\big)\dif y\dif s\\
			&=\int_0^t\int_0^\infty\big(S_x(x-y,t-s)+S_x(x+y,t-s)\big)\big((b\mu)(y,s)+\nu(y,s)\big)\dif y\dif s
		\end{split}
	\end{equation}
	and 
	\begin{equation}
		\mu_2(x,t) = \int_0^\infty \del{S(x-y,t) - S(x+y,t)}\mu_0(y)\dif y.
	\end{equation}
	Note that
	\begin{equation}\label{Integral_S_x}
		\int_0^\infty\big|S_x(x,t)\big|\dif x=-\int_0^\infty S_x(x,t)\dif x=-\lim_{x\to\infty}\big(S(x,t)-S(0,t)\big)=(2\pi\sigma^2 t)^{-1/2}.
	\end{equation}
	Call
	\begin{equation}\label{c'_constant}
		c':=2(2\pi\sigma^2)^{-1/2},
	\end{equation}
	and let $M>1$ to be determined later.
	Then
	\begin{equation}\label{L1Estimate_m_1}
		\begin{split}
			&e^{-Mt}\int_0^\infty|\mu_1(x,t)|\dif x
			\\[4pt]
			&\quad\leq e^{-Mt}\int_0^t\int_0^\infty\int_0^\infty\big(|S_x(x-y,t-s)|+|S_x(x+y,t-s)|\big)\big(\enVert{b}_{L^\infty}|\mu(y,s)|+|\nu(y,s)|\big)\dif x\dif y\dif s
			\\[4pt]
			&\quad\leq c'e^{-Mt}\int_0^t(t-s)^{-1/2}\int_0^\infty\big(\enVert{b}_{L^\infty}|\mu(y,s)|+|\nu(y,s)|\big)\dif y\dif s
			\\[4pt]
			&\quad=c'\int_0^te^{-M(t-s)}(t-s)^{-1/2}\int_0^\infty e^{-Ms}\big(\enVert{b}_{L^\infty}|\mu(y,s)|+|\nu(y,s)|\big)\dif y\dif s.
		\end{split}
	\end{equation}
	Here, the first inequality follows directly from \eqref{Duhamel+IBP_mu}; the second inequality follows from \eqref{Integral_S_x} and noting that $\exp\big\{-y^2/(2\sigma^2t)\big\}\leq 1$ for all $y\in(0,\infty)$.
	Define
	\begin{equation}\label{def:B}
		B:=\sup_{0\leq \tau\leq T}e^{-M\tau}\int_0^\infty|\mu(x,\tau)|\dif x.
	\end{equation}
	Then \eqref{L1Estimate_m_1} implies
	\begin{equation}\label{L1Estimate_m_2}
		\begin{split}
			e^{-Mt}\int_0^\infty|\mu_1(x,t)|\dif x&\leq c'B\enVert{b}_{L^\infty}\int_0^te^{-M(t-s)}(t-s)^{-1/2}\dif s+c'e^{-Mt}\enVert{\nu}_{L_t^\infty(L_x^1)}\int_0^t(t-s)^{-1/2}\dif s
			\\[4pt]
			&\leq c'B\enVert{b}_{L^\infty}M^{-1/2}\sqrt{\pi}+2c'e^{-Mt}\enVert{\nu}_{L_t^\infty(L_x^1)}\sqrt{t}.
		\end{split}
	\end{equation}
	The second inequality follows from the substitutions and estimate below
	\begin{equation}
		\int_0^te^{-M(t-s)}(t-s)^{-1/2}\dif s=\int_0^te^{-Ms}s^{-1/2}\dif s= M^{-1/2}\int_0^{Mt} e^{-s}s^{-1/2}\dif s\leq M^{-1/2}\sqrt{\pi}.
	\end{equation}
	Now, if $M>1$, then $2e^{-Mt}\sqrt{t}\leq 1$ for all $t>0$. Thus, \eqref{L1Estimate_m_2} implies
	\begin{equation} \label{eq:e-Mtabsmu1}
		e^{-Mt}\int_0^\infty|\mu_1(x,t)|\dif x\leq c'\enVert{b}_{L^\infty}\sqrt{\pi}M^{-1/2}B+c'\enVert{\nu}_{L_t^\infty(L_x^1)}.
	\end{equation}
	On the other hand, since $S(\cdot,t)$ is a probably density, we deduce
	\begin{equation} \label{eq:int mu2}
		\int_0^\infty \abs{\mu_2(x,t)}\dif x \leq \int_0^\infty \abs{\mu_0(x)}\dif x.
	\end{equation}
	Add $e^{-Mt}$ times \eqref{eq:int mu2} to \eqref{eq:e-Mtabsmu1}, take a supremum on both sides over all $t\in[0,T]$, and we find that
	\begin{equation}
		B\leq c'\enVert{b}_{L^\infty}\sqrt{\pi}M^{-1/2}B + \enVert{\mu_0}_{L^1} +c'\enVert{\nu}_{L_t^\infty(L_x^1)}.
	\end{equation}
	Choose $M>1$ large enough so that $M>(2c'\enVert{b}_{L^\infty}\sqrt{\pi})^2$.
	Consequently, we find that
	\begin{equation}
		B\leq 2\enVert{\mu_0}_{L^1} + 2c'\enVert{\nu}_{L_t^\infty(L_x^1)},
	\end{equation}
	which, given the definition of $B$ in \eqref{def:B}, establishes part {\textit{(a)}}.
	
	For part {\textit{(b)}} we argue similarly.
	First, note that
	\begin{equation}\label{Integral_xS_x}
		2\int_0^\infty x|S_x(x,t)|\dif x=-2\int_0^\infty xS_x(x,t)\dif x=2\int_0^\infty S(x,t)\dif t=1.
	\end{equation}
	Knowing this and using \eqref{Integral_S_x}, we can estimate
	\begin{align}\label{MomentEstimateInt_x}
		\begin{split}
			\int_0^\infty x\big(|S_x(x-y,t-s)|&+|S(x+y,t-s)|\big)\dif x
			\\[4pt]
			&=\int_{-y}^\infty(x+y)\envert{S_x(x,t-s)}\dif x+\int_y^\infty(x-y)\envert{S_x(x,t-s)}\dif x
			\\[4pt]
			&\leq\int_{-\infty}^\infty(x+y)\envert{S_x(x,t-s)}\dif x+\int_0^\infty x\envert{S_x(x,t-s)}\dif x
			\\[4pt]
			&=\frac{3}{2}+y\int_{-\infty}^\infty\envert{S_x(x,t-s)}\dif x
			\\[4pt]
			&\leq2+c'y(t-s)^{-1/2},
		\end{split}
	\end{align}
	where $c'$ is as in \eqref{c'_constant}.
	Note that the second equality above follows from \eqref{Integral_xS_x}, and the last inequality from \eqref{Integral_S_x}.
	We now use Fubini's Theorem to estimate
	\begin{equation}\label{MomentEstimate_1}
		\begin{split}
			\int_0^\infty &x|\mu_1(x,t)|\dif x
			\\[4pt]
			&\leq\int_0^t\int_0^\infty\int_0^\infty x\big(|S_x(x-y,t-s)|+|S_x(x+y,t-s)|\big)\big(\enVert{b}_{L^\infty}|\mu(y,s)|+|\nu(y,s)|\big)\dif x\dif y\dif s
			\\[4pt]
			&\leq\int_0^t\int_0^\infty\big(2+c'y(t-s)^{-1/2}\big)\big(\enVert{b}_{L^\infty}|\mu(y,s)|+|\nu(y,s)|\big)\dif y\dif s.
		\end{split}
	\end{equation}
	The second inequality follows directly from \eqref{MomentEstimateInt_x}.
	Let $M>1$ to be determined later and define
	\begin{equation}\label{def:B'}
		B':=\sup_{0\leq t\leq T}e^{-Mt}\int_0^\infty x|\mu(x,t)|\dif x.
	\end{equation}
	We then estimate in a similar manner as in the proof of part {\textit{(a)}}
	\begin{align}\label{MomentEstimate_2}
		\begin{split}
			e^{-Mt}\int_0^\infty x|\mu_2(x,t)|\dif x&\leq e^{-Mt}\int_0^t\int_0^\infty\big(2+c'y(t-s)^{-1/2}\big)\big(\enVert{b}_{L^\infty}|\mu(y,s)|+|\nu(y,s)|\big)\dif y\dif s
			\\[4pt]
			&\leq c'\int_0^te^{-M(t-s)}(t-s)^{-1/2}\int_0^\infty e^{-Ms}y\big(\enVert{b}_{L^\infty}|\mu(y,s)|+|\nu(y,s)|\big)\dif y\dif s
			\\[4pt]
			&\quad+2te^{-Mt}\big(\enVert{b}_{L^\infty}\enVert{\mu}_{L_t^\infty(L_x^1)}+\enVert{\nu}_{L_t^\infty(L_x^1)}\big).
		\end{split}
	\end{align}
	In similar fashion, we estimate
	\begin{equation} \label{MomentEstimate_3}
		\begin{split}
			\int_0^\infty x\abs{\mu_2(x,t)}\dif x
			&\leq \int_0^\infty \int_{-y}^\infty (x+y)S(x,t) \abs{\mu_0(y)}\dif x\dif y 
			+ \int_0^\infty \int_{y}^\infty (x-y)S(x,t) \abs{\mu_0(y)}\dif x\dif y \\
			&\leq 3\int_0^\infty xS(x,t)\dif x \int_0^\infty \abs{\mu_0(y)}\dif y
			+ \int_0^\infty y\abs{\mu_0(y)}\dif y,
		\end{split}
	\end{equation}
	where we have also used that $S(\cdot,t)$ is an even function.
	We calculate (by a change of variables) that $\int_0^\infty xS(x,t)\dif x = \sqrt{\frac{\sigma^2 t}{2\pi}}$.
	Add together \eqref{MomentEstimate_2} and \eqref{MomentEstimate_3} to get
	\begin{equation} \label{MomentEstimate_4}
		e^{-Mt}\int_0^\infty x|\mu(x,t)|\dif x \leq c'\enVert{b}_{L^\infty}\sqrt{\pi}M^{-1/2}B'+ \enVert{x\mu_0}_{L^1} + c'\enVert{x\nu}_{L_t^\infty(L_x^1)}+c_{b,\sigma}\del{\enVert{\mu_0}_{L^1} + \enVert{\nu}_{L_t^\infty(L_x^1)}}.
	\end{equation}
	Here, $c_{b,\sigma}<\infty$ is a constant that depends only on $\enVert{b}_{L^\infty}$ and $\sigma$; its existence is guaranteed by part {\textit{(a)}} of this proposition.
	Now, taking a supremum on both sides of \eqref{MomentEstimate_4} over all $t\in[0,T]$, then choosing $M>1$ large enough so that $M>(2\enVert{b}_{L^\infty}c'\sqrt{\pi})^2$, we derive
	\begin{equation}
		B'\leq 2\del{\enVert{x\mu_0}_{L^1} + c'\enVert{x\nu}_{L_t^\infty(L_x^1)}+c_{b,\sigma}\del{\enVert{\mu_0}_{L^1} + \enVert{\nu}_{L_t^\infty(L_x^1)}}}.
	\end{equation}
	Recalling the definition of $B'$ in \eqref{def:B'}, the above estimate implies part {\textit{(b)}} of the proposition.
\end{proof}

Before proving Proposition \ref{prop:L2timeL1space_mu}, we present an abstract lemma.
\begin{lemma}\label{lem:L2timeL1space:m}
	Let $A\geq1$, $B,\delta > 0$ be given constants.
	Suppose $f,g:\intco{0,\infty} \to \intco{0,\infty}$ are functions that satisfy
	\begin{equation} \label{eq:ineq}
		f(t_1) \leq Af(t_0) + B\int_{t_0}^{t_1}(t_1-s)^{-1/2}\del{f(s) + g(s)}\dif s \quad \forall 0 \leq t_0 \leq t_1 \leq t_0 + \delta
	\end{equation}
	Then for any $\lambda > \frac{1}{\delta}\ln(A)$, we have
	\begin{equation} \label{eq:estimate}
		\del{1 - \frac{2\delta^{1/2}B}{1 - Ae^{-\lambda \delta}}}\int_0^T e^{-\lambda t}f(t)\dif t \leq \frac{A}{\kappa}f(0) 
		+
		\frac{2\delta^{1/2}B}{1 - Ae^{-\lambda \delta}}\int_0^T e^{-\lambda t}g(t) \dif t.
	\end{equation}
\end{lemma}

\begin{proof}
	Set $h(t) = f(t) + g(t)$, so that \eqref{eq:ineq} reads simply
	\begin{equation} \label{eq:ineq1}
		f(t_1) \leq Af(t_0) + B\int_{t_0}^{t_1}(t_1-s)^{-1/2}h(s)\dif s \quad  \text{for all }0 \leq t_0 \leq t_1 \leq t_0 + \delta.
	\end{equation}
	For arbitrary $t > 0$ let $n = \floor{\frac{t}{\delta}}$.
	Use \eqref{eq:ineq1} $n+1$ times to get
	\begin{equation} \label{eq:ineq2}
		f(t) \leq A^{n+1}f(0) + \sum_{j=0}^n A^jB\int_{\del{t-(j+1)\delta}_+}^{t-j\delta}(t-j\delta-s)^{-1/2}h(s)\dif s,
	\end{equation}
	where $s_+ := \max\{s,0\}$.
	Note that
	\begin{equation*}
		t - (j+1)\delta < s \leq t-j\delta \Rightarrow
		j = \floor{\frac{t-s}{\delta}},
	\end{equation*}
	so we define $\phi(s) = \del{s - \floor{\frac{s}{\delta}}\delta}^{-1/2}$.		
	Then \eqref{eq:ineq2} implies
	\begin{align} \label{eq:ineq3}
		f(t) &\leq A^{\frac{t}{\delta}+1}f(0) + \sum_{j=0}^n B \int_{\del{t-(j+1)\delta}_+}^{t-j\delta}
		A^{\frac{t-s}{\delta}}\phi(t-s)h(s)\dif s
		\\[4pt]
		&= A^{\frac{t}{\delta}+1}f(0) + B\int_0^t A^{\frac{t-s}{\delta}}\phi(t-s)h(s)\dif s.\nonumber
	\end{align}
	Let $\lambda > \frac{1}{\delta}\ln(A)$ and set $\kappa = \lambda - \frac{1}{\delta}\ln(A) > 0$.
	Multiply \eqref{eq:ineq3} by $e^{-\lambda t}$, then integrate from 0 to $T$ to get
	\begin{equation} \label{eq:ineq4}
		\begin{split}
			\int_0^T e^{-\lambda t}f(t)\dif t &\leq \frac{A}{\kappa}f(0) 
			+ B\int_0^T\int_0^t e^{-\kappa(t-s)}\phi(t-s)e^{-\lambda s}h(s)\dif s \dif t\\
			&= \frac{A}{\kappa}f(0) 
			+ B\int_0^T\int_0^{T-s} e^{-\kappa t}\phi(t)e^{-\lambda s}h(s)\dif t \dif s.
		\end{split}
	\end{equation}
	We now observe that
	\begin{equation} \label{eq:int phi}
		\begin{split}
			\int_0^\infty e^{-\kappa t}\phi(t)\dif t
			&=
			\sum_{n=0}^\infty \int_{n\delta}^{(n+1)\delta} e^{-\kappa t}(t-n\delta)^{-1/2}\dif t\\
			&= \sum_{n=0}^\infty e^{-n\kappa \delta}\int_{0}^{\delta} e^{-\kappa t}t^{-1/2}\dif t\\
			&\leq \frac{1}{1 - e^{-\kappa \delta}}\int_0^\delta t^{-1/2}\dif t\\
			&= \frac{2\delta^{1/2}}{1 - e^{-\kappa \delta}}
			= \frac{2\delta^{1/2}}{1 - Ae^{-\lambda \delta}}
		\end{split}
	\end{equation}
	Applying \eqref{eq:int phi} to \eqref{eq:ineq4}, we get
	\begin{equation} \label{eq:ineq5}
		\int_0^T e^{-\lambda t}f(t)\dif t \leq \frac{A}{\kappa}f(0) 
		+
		\frac{2\delta^{1/2}B}{1 - Ae^{-\lambda \delta}}\int_0^T e^{-\lambda s}\del{f(s) + g(s)} \dif s,
	\end{equation}
	which implies \eqref{eq:estimate}.
\end{proof}

We now apply Proposition \ref{lem:L2timeL1space:m} to the Fokker-Planck equation.
\begin{proof}[Proof of Proposition \ref{prop:L2timeL1space_mu}]
	First we will treat $\mu$ as a solution of the abstract Fokker-Planck equation \eqref{F-P_equation_mu} by identifying $b:=F^{(0)}$ and $\nu:=\Phi+\frac{1}{2}\big(G(\varepsilon)-w_x\big)m^{(0)}$.
	We start with the following formula: for every $t_1 \geq t_0 \geq 0$,
	\begin{equation} \label{eq:mu duhamel}
		\begin{split}
			&\mu(x,t_1) = \int_0^\infty \del{S(x-y,t_1-t_0) - S(x+y,t_1-t_0)}\mu(y,t_0)\dif y
			\\[4pt]
			&\quad+ \int_{t_0}^{t_1}\int_0^\infty \del{\dpd{S}{x}(x-y,t_1-s) + \dpd{S}{x}(x+y,t_1-s)}\del{b(y,s)\mu(y,s) + \nu(y,s)}\dif y \dif s,
		\end{split}
	\end{equation}
	Using the same calculations as in \eqref{L1Estimate_m_1}, we get
	\begin{equation}
		\int_0^\infty \abs{\mu(x,t_1)}\dif x \leq \int_0^\infty \abs{\mu(y,t_0)}\dif y
		+ c'\int_{t_0}^{t_1}\int_0^\infty (t_1-s)^{-1/2}\del{\enVert{b}_\infty\abs{\mu(y,s)} + \abs{\nu(y,s)}}\dif y \dif s,
	\end{equation}
	where $c'$ is defined in \eqref{c'_constant}.
	Let
	\begin{equation*}
		f(t) := \del{\int_0^\infty \abs{\mu(x,t)}\dif x}^2
		\,\,\text{ and }\,\,
		g(t) := \del{\int_0^\infty \abs{\nu(x,t)}\dif x}^2,
	\end{equation*}
	so we can write
	\begin{equation} \label{eq:int |mu| estimate}
		f(t_1)^{1/2} \leq f(t_0)^{1/2}
		+ c\int_{t_0}^{t_1} (t_1-s)^{-1/2}\del{\enVert{b}_\infty f(s)^{1/2} + g(s)^{1/2}}\dif s.
	\end{equation}
	By H\"older's inequality, we get
	\begin{align}
		f(t_1)^{1/2} &\leq f(t_0)^{1/2}
		+ \enVert{b}_\infty c\del{2(t_1-t_0)^{1/2}}^{1/2}\del{\int_{t_0}^{t_1} (t_1-s)^{-1/2}f(s)\dif s}^{1/2}
		\\[4pt]
		&\quad+ c\del{2(t_1-t_0)^{1/2}}^{1/2}\del{\int_{t_0}^{t_1} (t_1-s)^{-1/2}g(s)\dif s}^{1/2}.\nonumber
	\end{align}
	Next, we square both sides and use the inequality $(a+b)^2 \leq 2(a^2 + b^2)$ to get
	\begin{equation}\label{f(t1)bound1}
		f(t_1) \leq 2f(t_0)
		+ 4c(1+\enVert{b}_{\infty})(t_1-t_0)^{1/2}\int_{t_0}^{t_1} (t_1-s)^{-1/2}\del{f(s)+ g(s)}\dif s.
	\end{equation}

	We now estimate
	\begin{equation}\label{Phi+(G-wx)m0 estimate}
		\int_0^\infty\envert[2]{\Phi+\frac{1}{2}\big(G(\varepsilon)-w_x\big)m^{(0)}}\dif x\leq \enVert[1]{\Phi}_{L_x^1}+2\int_0^\infty \envert[1]{w_xm^{(0)}}\dif x+\int_0^\infty|u_x^{(0)}\mu|\dif x,
	\end{equation}
	so that
	\begin{align}\label{g(t)bound}
		g(t)&\leq 3\enVert[1]{\Phi}_{L_x^1}^2+12\Bigg(\int_0^\infty |w_xm^{(0)}|\dif x\Bigg)^2+3\Bigg(\int_0^\infty|u_x^{(0)}\mu|\dif x\Bigg)^2
		\\[4pt]
		&\leq 3\enVert[1]{\Phi}_{L_x^1}^2+12\int_0^\infty w_x^2m^{(0)}\dif x+3\enVert[1]{u_x^{(0)}}_{\infty}^2f(t).\nonumber
	\end{align}
	The first inequality follows from the fact that $\big(\sum_{k=1}^nx_k\big)^2\leq n\sum_{k=1}^nx_k^2$.
	The second inequality follows from the Cauchy-Schwarz inequality and the fact that $\int_0^\infty m^{(0)}\dif x\leq 1$. 
	 Now, define
	\begin{equation}
		h(t):=\frac{1}{1+3\enVert[1]{u_x^{(0)}}_{\infty}^2}\Big(3\enVert[1]{\Phi}_{L_t^\infty(L_x^1)}+12\int_0^\infty w_x^2m^{(0)}\dif x\Big).
	\end{equation}
	Since $$1 + \enVert{b}_{\infty}=1 + \enVert[1]{F^{(0)}}_{\infty}\leq 2\big(1+\enVert[1]{u_x^{(0)}}_{\infty}\big),$$ 
	we see that \eqref{g(t)bound} and \eqref{f(t1)bound1} yield
	\begin{equation}\label{f(t1)bound2}
		f(t_1)\leq 2f(t_0)+24c\big(1+\enVert[1]{u_x^{(0)}}_{\infty}\big)^3(t_1-t_0)^{1/2}\int_0^\infty(t_1-s)^{1/2}\big(f(s)+h(s)\big),
	\end{equation}
	upon factoring out $\big(1+3\enVert[1]{u_x^{(0)}}_{\infty}^2\big)$, and estimating $\big(1+3\enVert[1]{u_x^{(0)}}_{\infty}^2\big)\leq 3\big(1+\enVert[1]{u_x^{(0)}}_\infty\big)^2$.
	From \eqref{f(t1)bound2}, we see that Proposition \ref{lem:L2timeL1space:m} applies with $A:=2$, and $B:=24c\big(1+\enVert[1]{u_x^{(0)}}_{\infty}\big)^3\delta^{1/2}$.
	
	Now, choose $\delta>0$ small enough such that
	\begin{equation}
		2\delta^{1/2}B<\frac{1}{4}\iff\delta<\frac{1}{192c\big(1+\enVert[1]{u_x^{(0)}}_\infty\big)^3}.
	\end{equation}
	Then choose $\lambda>\frac{1}{\delta}\ln(2)$ large enough such that
	\begin{equation}
		1-Ae^{-\lambda\delta}>\frac{1}{2}\iff\lambda>\frac{2}{\delta}\ln(2)\iff\lambda>C_0.
	\end{equation}
	As a result,
	\begin{equation}
		\frac{2\delta^{1/2}B}{1-Ae^{-\lambda \delta}}<\frac{1}{2},
	\end{equation}
	and by Proposition \ref{lem:L2timeL1space:m} we obtain
	\begin{align}
		\int_0^Te^{-\lambda t}\Bigg(\int_0^\infty|\mu|\dif x\Bigg)^2\dif t&\leq\frac{3}{1+3\enVert[1]{u_x^{(0)}}_\infty^2}\int_0^Te^{-\lambda t}\enVert[1]{\Phi}_{L_x^1}^2\dif t
		\\[4pt]
		&\quad+\frac{12}{1+3\enVert[1]{u_x^{(0)}}_{\infty}^2}\int_0^Te^{-\lambda t}\int_0^\infty w_x^2m^{(0)}\dif x\dif t\nonumber
		\\[4pt]
		&\leq\frac{3\enVert[1]{\Phi}_{L_t^\infty(L_x^1)}^2}{C_0\big(1+3\enVert[1]{u_x^{(0)}}_\infty^2\big)}\nonumber
		\\[4pt]
		&\quad+\frac{12}{1+3\enVert[1]{u_x^{(0)}}_{\infty}^2}\int_0^Te^{-\lambda t}\int_0^\infty w_x^2m^{(0)}\dif x\dif t.\nonumber
	\end{align}
	The second inequality is obtained by the simple estimate
	\begin{equation}
		\int_0^Te^{-\lambda t}\dif t=\frac{1}{\lambda}(1-e^{-\lambda T})<\frac{1}{\lambda}<\frac{1}{C_0}.
	\end{equation}
	Thus, \eqref{L2timeLxLambdaBig} holds.
	
	Now, in the case that $T$ is finite, we can estimate for all $\lambda\in(0,C_0]$,
	\begin{equation}
		e^{-\lambda t}=e^{(2C_0-\lambda)t}e^{-2C_0t}\leq e^{(2C_0-\lambda)T}e^{-2C_0t}.
	\end{equation}
	Consequently, using \eqref{L2timeLxLambdaBig} with $\lambda=2C_0$, we obtain
	\begin{align}
		\int_0^Te^{-\lambda t}\Bigg(\int_0^\infty|\mu|\dif x\Bigg)^2\dif t&\leq e^{(2C_0-\lambda)T}\int_0^Te^{-2C_0 t}\Bigg(\int_0^\infty|\mu|\dif x\Bigg)^2\dif t
		\\[4pt]
		&\leq e^{(2C_0-\lambda)T}\Bigg(\frac{C_1\enVert[1]{\Phi}_{L_t^\infty(L_x^1)}^2}{C_0}+C_2\int_0^T e^{-2C_0 t}\int_0^\infty w_x^2m^{(0)}\dif x\dif t\Bigg)\nonumber
		\\[4pt]
		&\leq\frac{C_1\enVert[1]{\Phi}_{L_t^\infty(L_x^1)}^2e^{2C_0T}}{C_0}+C_2e^{2C_0T}\int_0^T e^{-\lambda t}\int_0^\infty w_x^2m^{(0)}\dif x\dif t,\nonumber
	\end{align}
	whereby the last inequality follows, because $e^{-2C_0t}\leq e^{-\lambda t}$ for all $t\in[0,T]$. This completes the proof.
\end{proof}

\subsection{An abstract H\"older estimate}

\label{sec:abstract holder}

In this subsection we will give an abstract result on H\"older regularity for a parabolic equation with Dirichlet boundary conditions and bounded coefficients.
Our argument is in the spirit of \cite[Lemma 3.2.2.]{cardaliaguet2019master}, but we cover the case of an unbounded domain.
The result stated below is also meant to allow for a possibly infinite time horizon, though in the present work we will not exploit this.
\begin{lemma} \label{lem:uniform t Holder ux}
	Fix $0 < \alpha < 1$.
	Let $u$ be a solution of 
	\begin{equation}
		\label{eq:linear hj}
		\dpd{u}{t} + \lambda u - \frac{\sigma^2}{2}\dpd[2]{u}{x} + V(x,t)\dpd{u}{x} = F, \quad u(0,t) = 0, \quad u(x,0) = u_0(x)
	\end{equation}
	where $\lambda$ is any positive constant, $V$ and $F$ are a bounded continuous functions, and $u_0 \in \s{C}^{1+\alpha}_\diamond(\s{D})$ (i.e.~$u_0 \in \s{C}^{1+\alpha}(\overline{\s{D}})$ and $u_0(0) = 0$).
	Then
	\begin{equation} \label{eq:uniform t Holder ux}
		\enVert{u}_{\s{C}^{\alpha,\alpha/2}\del{\overline{\s{D}}  \times \intcc{0,T}}} + \enVert{\dpd{u}{x}}_{\s{C}^{\alpha,\alpha/2}\del{\overline{\s{D}} \times \intcc{0,T}}} \leq C\del{\enVert{V}_\infty,\alpha,\lambda}\del{\enVert{F}_\infty + \enVert{u_0}_{\s{C}^{1+\alpha}}},
	\end{equation}
	where $C\del{\enVert{V}_\infty,\alpha,\lambda}$ is independent of $T$.
	
	As a corollary, we have
	\begin{equation} \label{eq:Calpha t Calpha x}
		\enVert{u}_{\s{C}^{\alpha/4}\del{[0,T];\s{C}^{1+\alpha/2}(\overline{\s{D}})}} \leq C\del{\enVert{V}_\infty,\alpha,\lambda}\del{\enVert{F}_\infty + \enVert{u_0}_{\s{C}^{1+\alpha}}}.
	\end{equation}
	\begin{comment}
	\begin{equation} 
	\begin{aligned}
	\sup_t \del{\enVert{u(\cdot,t)}_{\s{C}^{\alpha}} +  \enVert{\dpd{u}{x}(\cdot,t)}_{\s{C}^{\alpha}}} &\leq C\del{\enVert{V}_0,\alpha,\lambda}\del{\enVert{F}_0 + \enVert{u_0}_{\s{C}^{1+\alpha}}},\\
	\sup_{t_1\neq t_2} \frac{\enVert{u(\cdot,t_1)-u(\cdot,t_2)}_{\s{C}^{\alpha}} +  \enVert{\dpd{u}{x}(\cdot,t_1)-\dpd{u}{x}(\cdot,t_2)}_{\s{C}^{\alpha}}}{\abs{t_1-t_2}} &\leq C\del{\enVert{V}_0,\alpha,\lambda}\del{\enVert{F}_0 + \enVert{u_0}_{\s{C}^{1+\alpha}}},\\
	\end{aligned}.
	\end{equation}
	\end{comment}
\end{lemma}

\begin{proof}
	We will start by assuming $u_0 = 0$.
	By a standard application of the maximum principle we have that $\enVert{u}_0 \leq \frac{1}{\lambda}\enVert{F}_0$.
	Let $\phi(x)$ be a smooth function, and observe that
	\begin{equation} \label{eq:linear hj1}
		\pd{(\phi u)}{t} - \frac{\sigma^2}{2}\pd[2]{(\phi u)}{x} + \del{V + \frac{\sigma^2\phi'}{\phi}}\pd{(\phi u)}{x} + \lambda(\phi u) = g(x,t),
	\end{equation}
	where
	\begin{equation}
		g(x,t) = \phi(x)F(x,t) + \del{\frac{\sigma^2\del{\phi'(x)}^2}{\phi(x)} - \frac{\sigma^2}{2}\phi''(x) + V(x,t)\phi'(x)}u(x,t).
	\end{equation}
	Fix a time $\tau > 0$, and set $v(x,t) = e^{\lambda(t-\tau)}\phi(x)u(x,t)$.
	Then \eqref{eq:linear hj1} becomes
	\begin{equation} \label{eq:linear hj2}
		\pd{v}{t} - \frac{\sigma^2}{2}\pd[2]{v}{x} + \del{V + \frac{\sigma^2\phi'}{\phi}}\pd{v}{x} = \tilde g,
	\end{equation}
	where
	\begin{equation}
		\tilde g(x,t) = e^{\lambda(t-\tau)}g(x,t).
	\end{equation}
	Fix any $a > 0$ and let $\phi(x) = \del{1+(x-a)^2}^{-1/2}$.
	Note that $\phi$ satisfies the following properties:
	\begin{equation}
		\abs{\frac{\phi'}{\phi}} \leq 1, \ \abs{\frac{\phi''}{\phi}} \leq 2,
		\quad 
		\del{\int_{-\infty}^\infty \abs{\phi(x)}^p\dif x}^{1/p} = \del{\frac{2p}{p-1}}^{1/p} \ \forall p > 1,
	\end{equation}
	and also $\del{\int_0^\tau e^{p\lambda(t-\tau)}\dif t}^{1/p} \leq (\lambda p)^{-1/p}$.
	It follows that
	\begin{equation}
		\enVert{\tilde g}_{L^p(\s{D} \times (0,\tau))} \leq C(p)\lambda ^{-1/p}\del{\enVert{F}_0 + \del{1 +\enVert{V}_\infty}\enVert{u}_0}
		\leq C(p)\lambda ^{-1/p}\enVert{F}_0\del{1 + \enVert{V}_\infty},
	\end{equation}
	where $C(p)$ remains bounded as $p\to\infty$.
	
	Take $p$ arbitrarily large.
	By the potential estimates in \cite[Section IV.3]{ladyzhenskaia1968linear} we have an estimate of the form
	\begin{multline}
		\enVert{\dpd{v}{t}}_{L^p(\s{D} \times (0,\tau))} + \enVert{\dpd[2]{v}{x}}_{L^p(\s{D} \times (0,\tau))} \leq C\enVert{\tilde g - \del{V + \frac{2\phi'}{\phi}}\dpd{v}{x}}_{L^p(\s{D} \times (0,\tau))}\\
		\leq C(p)\lambda ^{-1/p}\enVert{F}_0\del{1 + \enVert{V}_0} + C\del{1+\enVert{V}_0}\enVert{\dpd{v}{x}}_{L^p(\s{D} \times (0,\tau))},
	\end{multline}
	where the constants do not depend on $\tau$.
	On the other hand we have
	\begin{equation}
		\enVert{v}_{L^p(\s{D} \times (0,\tau))}
		\leq C(p)\lambda ^{-1/p}\enVert{u}_0 \leq C(p)\lambda ^{-1-1/p}\enVert{F}_0.
	\end{equation}
	By interpolation (see e.g.~\cite[Lemma II.3.3]{ladyzhenskaia1968linear}) we have
	\begin{equation}
		\enVert{\dpd{v}{x}}_{L^p(\s{D} \times (0,\tau))}
		\leq \delta \del{\enVert{\dpd{v}{t}}_{L^p(\s{D} \times (0,\tau))} + \enVert{\dpd[2]{v}{x}}_{L^p(\s{D} \times (0,\tau))}} + \frac{C}{\delta}\enVert{v}_{L^p(\s{D} \times (0,\tau))}
	\end{equation}
	for some constant $C$, where $\delta > 0$ is sufficiently small.
	Choosing $\delta > 0$ small enough, we deduce
	\begin{multline}
		\enVert{\dpd{v}{t}}_{L^p(\s{D} \times (0,\tau))} + \enVert{\dpd[2]{v}{x}}_{L^p(\s{D} \times (0,\tau))}  
		\leq C(p)\lambda ^{-1/p}\enVert{F}_0\del{1 + \enVert{V}_\infty} + C\del{1+\enVert{V}_\infty^2}\enVert{v}_{L^p(\s{D} \times (0,\tau))}\\
		\leq C(p)\lambda ^{-1/p}\enVert{F}_0\del{1 + \enVert{V}_\infty} + C\del{1+C(p)\lambda ^{-1-1/p}\enVert{F}_0\enVert{V}_\infty^2}
	\end{multline}
	Then by a Sobolev type embedding theorem \cite[Lemma II.3.3]{ladyzhenskaia1968linear} we have
	\begin{equation}
		\enVert{v}_{\s{C}^{\alpha,\alpha/2}} + \enVert{\dpd{v}{x}}_{\s{C}^{\alpha,\alpha/2}} \leq C(p,\lambda )\enVert{F}_0\del{1 + \enVert{V}_\infty^2} + C
	\end{equation}
	for $\alpha = 1-\frac{3}{p}$ (assuming $p > 3$).
	We can rewrite this as
	\begin{equation}
		\enVert{e^{\lambda(\cdot - \tau)}\phi u}_{\s{C}^{\alpha,\alpha/2}} + \enVert{e^{\lambda(\cdot - \tau)}\del{\phi' u + \phi\dpd{u}{x}}}_{\s{C}^{\alpha,\alpha/2}} \leq C(p,\lambda )\enVert{F}_0\del{1 + \enVert{V}_\infty^2} + C.
	\end{equation}
	Since $t \mapsto e^{\lambda t}$ is locally Lipschitz with constant depending on $\lambda$, we can write this as
	\begin{equation}
		\enVert{\phi u}_{\s{C}^{\alpha,\alpha/2}(\overline{\s{D}} \times [\tau-1,\tau])} + \enVert{\phi' u + \phi\dpd{u}{x}}_{\s{C}^{\alpha,\alpha/2}(\overline{\s{D}} \times [\tau-1,\tau])} \leq C(p,\lambda )\enVert{F}_0\del{1 + \enVert{V}_\infty^2} + C(\lambda).
	\end{equation}
	Using the fact that $\frac{\phi'}{\phi}$ is bounded by 1 and is globally Lipschitz with $\Lip\del{\frac{\phi'}{\phi}} = 1$, we also have
	\begin{equation}
		\enVert{\phi\dpd{u}{x}}_{\s{C}^{\alpha,\alpha/2}}
		\leq \enVert{\phi'u+\phi\dpd{u}{x}}_{\s{C}^{\alpha,\alpha/2}}
		+ \enVert{\phi'u}_{\s{C}^{\alpha,\alpha/2}}
		\leq \enVert{\phi'u+\phi\dpd{u}{x}}_{\s{C}^{\alpha,\alpha/2}}
		+ \enVert{\phi u}_{\s{C}^{\alpha}},
	\end{equation}
	hence
	\begin{equation}
		\enVert{\phi u}_{\s{C}^{\alpha,\alpha/2}(\overline{\s{D}} \times [\tau-1,\tau])}
		+ \enVert{\phi\dpd{u}{x}}_{\s{C}^{\alpha,\alpha/2}(\overline{\s{D}} \times [\tau-1,\tau])}
		\leq C(p,\lambda )\enVert{F}_0\del{1 + \enVert{V}_\infty^2} + C(\lambda).
	\end{equation}
	Now, since $\frac{1}{\phi}$ is globally Lipschitz with $\Lip(1/\phi) = 1$, and bounded on $[a-1,a+1]$ with an upper bound of 2, we deduce that
	\begin{equation}
		\enVert{u}_{\s{C}^{\alpha,\alpha/2}\del{[a-1,a+1]  \times [\tau-1,\tau]}} + \enVert{\dpd{u}{x}}_{\s{C}^{\alpha,\alpha/2}\del{[a-1,a+1] \times [\tau-1,\tau]}} \leq C(p,\lambda )\enVert{F}_0\del{1 + \enVert{V}_\infty^2} + C(\lambda).
	\end{equation}
	This estimate is independent of $\tau$ and $a$.
	Letting $\tau$ and $a$ vary through all the positive integers, and since $p$ can be determined through $\alpha$, it follows that
	\begin{equation} \label{eq:uniform t Holder ux1}
		\enVert{u}_{\s{C}^{\alpha,\alpha/2}\del{\overline{\s{D}}  \times \intco{0,\infty}}} + \enVert{\dpd{u}{x}}_{\s{C}^{\alpha,\alpha/2}\del{\overline{\s{D}} \times \intco{0,\infty}}} \leq C(\alpha,\lambda )\enVert{F}_0\del{1 + \enVert{V}_\infty^2} + C(\lambda).
	\end{equation}
	\begin{comment}
	In particular,
	\begin{equation} 
	\begin{aligned}
	\sup_t \del{\enVert{u(\cdot,t)}_{\s{C}^{\alpha}\del{\overline{\s{D}}}} + \enVert{\dpd{u(\cdot,t)}{x}}_{\s{C}^{\alpha}\del{\overline{\s{D}}}}} &\leq C(\alpha,\lambda)\enVert{F}_0\del{1 + \enVert{V}_0^2} + C,\\
	\sup_{t_1 \neq t_2} \frac{\enVert{u(\cdot,t_1)-u(\cdot,t_2)}_{\s{C}^{\alpha}\del{\overline{\s{D}}}} + \enVert{\dpd{u(\cdot,t_1)}{x}-\dpd{u(\cdot,t_2)}{x}}_{\s{C}^{\alpha}\del{\overline{\s{D}}}}}{\abs{t_1-t_2}} &\leq C(\alpha,\lambda)\enVert{F}_0\del{1 + \enVert{V}_0^2} + C
	\end{aligned}
	\end{equation}
	\end{comment}
	
	We now remove the assumption that $u_0 = 0$.
	Let $w$ be the solution of
	\begin{equation} \label{eq:w-global in time holder}
		\pd{w}{t} - \frac{\sigma^2}{2}\pd[2]{w}{x} + \lambda w = 0, \quad w(x,0) = u_0(x).
	\end{equation}
	As $\lambda > 0$, by the maximum principle, we have $\enVert{w}_0 \leq \enVert{u_0}_0$.
	Then $\intcc{w}_{\s{C}^{\alpha,\alpha/2}} \leq C\enVert{u_0}_{\s{C}^\alpha}$ by \cite[Theorem 10.1]{ladyzhenskaia1968linear}; this estimate does not depend on time because of the global in time $L^\infty$ bound.
	This establishes $\enVert{w}_{\s{C}^{\alpha,\alpha/2}} \leq C\enVert{u_0}_{\s{C}^{\alpha}}$.
	We then take the derivative in $x$ of Equation \eqref{eq:w-global in time holder} and apply the same argument as above to $\dpd{w}{x}$ to establish $\enVert{\dpd{w}{x}}_{\s{C}^{\alpha,\alpha/2}} \leq C\enVert{u_0}_{\s{C}^{1+\alpha}}.$
	Then let $\hat u$ be the solution of
	\begin{equation} 
		\pd{\hat u}{t} - \frac{\sigma^2}{2}\pd[2]{\hat u}{x} + V(x,t)\pd{\hat u}{x} + \lambda \hat u = F(x,t) - V(x,t)\pd{w}{x}
	\end{equation}
	with zero initial conditions.
	Then by \eqref{eq:uniform t Holder ux1} we have
	\begin{equation}
		\begin{aligned}
			\enVert{\hat u}_{\s{C}^{\alpha,\alpha/2}\del{\overline{\s{D}}  \times \intco{0,\infty}}} + \enVert{\dpd{\hat u}{x}}_{\s{C}^{\alpha,\alpha/2}\del{\overline{\s{D}} \times \intco{0,\infty}}} &\leq C(\alpha,\lambda,\enVert{V}_\infty)\enVert{F+V\dpd{w}{x}}_0\\
			&\leq C\del{\enVert{V}_\infty,\alpha,\lambda }\del{\enVert{F}_0 + \enVert{u_0}_{\s{C}^{1+\alpha}}}.
		\end{aligned}
	\end{equation}
	As $u = \hat u + w$ is the solution to \eqref{eq:linear hj}, the claim is proved.
	
	Finally, to prove \eqref{eq:Calpha t Calpha x}, note that \eqref{eq:uniform t Holder ux} immediately implies
	\begin{equation} \label{eq:Calpha t Calpha x1}
		\sup_{0\leq t\leq T} \enVert{u(\cdot,t)}_{\s{C}^{1+\alpha/2}(\overline{\s{D}})} \leq \sup_{0\leq t\leq T} \enVert{u(\cdot,t)}_{\s{C}^{1+\alpha}(\overline{\s{D}})}
		\leq C\del{\enVert{V}_\infty,\alpha,\lambda}\del{\enVert{F}_\infty + \enVert{u_0}_{\s{C}^{1+\alpha}}},
	\end{equation}
	and we also have
	\begin{equation} \label{eq:Calpha t Calpha x2}
		\begin{split}
			\sup_{t \neq s} \sup_{x \neq y}&\frac{\abs{\pd{u}{x}(x,t) - \pd{u}{x}(x,s) - \pd{u}{x}(y,t) + \pd{u}{x}(y,s)}}{\abs{t-s}^{\alpha/4}\abs{x-y}^{\alpha/2}}\\
			& \leq \sup_{t \neq s} \sup_{x \neq y}\frac{2\enVert{\pd{u}{x}}_{\s{C}^{\alpha,\alpha/2}}\min\cbr{\abs{t-s}^{\alpha/2},\abs{x-y}^{\alpha}}}{\abs{t-s}^{\alpha/4}\abs{x-y}^{\alpha/2}}
			\\
			&\leq C\del{\enVert{V}_\infty,\alpha,\lambda}\del{\enVert{F}_\infty + \enVert{u_0}_{\s{C}^{1+\alpha}}}.
		\end{split}
	\end{equation}
	Combine \eqref{eq:Calpha t Calpha x1} and \eqref{eq:Calpha t Calpha x2} to get \eqref{eq:Calpha t Calpha x}.
\end{proof}

\section{Nonlocal Existence Lemma}\label{sec:nonlocalExistenceLemmas}

For this lemma, recall the definition $\s{X} = \s{C}^{\alpha,\alpha/2} \cap L_t^\infty(L_x^1) \cap \s{C}^{\alpha/2}\del{[0,T];(\s{C}^{1+2\alpha})^*}$
with norm
\begin{equation}
	\enVert{m}_{\s{X}} :=
	\enVert{m}_{\s{C}^{\alpha,\alpha/2}} +
	\enVert{m}_{L_t^\infty(L_x^1)} +
	 \enVert{m}_{\s{C}^{\alpha/2}\del{[0,T];(\s{C}^{1+2\alpha})^*}}.
\end{equation}
\begin{lemma} \label{lem:existence Holder}
	Let $\alpha \in (0,1/2)$ be such that $u^{(0)},m^{(0)} \in \s{C}^{2+2\alpha,1+\alpha}$.
	Let $m \in \s{X}$, $u_T \in \s{C}^{2+\alpha}(\intco{0,\infty})$, and $\Psi\in\s{C}^{\alpha,\alpha/2}(\intco{0,\infty} \times [0,T])$ be given.
	Consider the backward parabolic equation
\begin{equation}\label{eq:nonlocal}
	\begin{cases}
		\begin{array}{lll}
			(i)&\displaystyle u_t+\frac{\sigma^2}{2}u_{xx}-ru + \lambda\Psi+\lambda F^{(0)}(\varepsilon)\big(G(u_x,m;\varepsilon)-u_x\big)=0, \text{\hspace{.5cm}} &0\leq x< \infty,\,\,0\leq t\leq T\vspace{4pt}\\
			(ii)&\displaystyle u(x,T)=u_T(x), &0\leq x< \infty\vspace{4pt}\\
			(iii)&\displaystyle u(0,t)=0, &0\leq t\leq T,
		\end{array}
	\end{cases}
\end{equation}
where $G$ is defined as in \eqref{def:G}.
We assume the usual compatibility conditions of first order:
\begin{equation}
	u_T(0) = 0, \quad \frac{\sigma^2}{2}u_T''(0) + \lambda \Psi(0,T) - \lambda F^{(0)}(\epsilon)(0,0)u_T'(0) = 0.
\end{equation}
Then there exists a unique classical solution $u \in \s{C}^{2+\alpha,1+\alpha/2}(\intco{0,\infty} \times [0,T])$ to the boundary value problem \eqref{eq:nonlocal}.
Moreover, the following estimate holds:
\begin{equation}
\label{eq:parabolic estimate Holder}
\enVert{u}_{\s{C}^{2+\alpha,1+\alpha/2}} \leq C\del{\enVert{m}_{\s{X}} + \enVert{\Psi}_{\s{C}^{\alpha/2,\alpha}} + \enVert{u_T}_{\s{C}^{2+\alpha}}}
\end{equation}
where $C$ depends only on the data.
\end{lemma}

\begin{proof}
	Let $u \in C^{2+\alpha,1+\alpha/2}(\intco{0,\infty} \times [0,T])$ be given.
	Note that
	\begin{equation}
		\enVert{G(u_x,m;\varepsilon)}_{\s{C}^{\alpha,\alpha/2}} \leq C\del{\enVert{m}_{\s{C}^{\alpha/2}\del{[0,T];(\s{C}^{1+2\alpha})^*}} + \enVert{u}_{\s{C}^{2+\alpha,1+\alpha/2}}},
	\end{equation}
	where $C$ depends on the $\s{C}^{2+2\alpha,1+\alpha}$ norms of $u^{(0)}$ and $m^{(0)}$.
By \cite[Theorem IV.5.2]{ladyzhenskaia1968linear}, there exists a unique solution $w \in C^{2+\alpha,1+\alpha/2}(\intco{0,\infty} \times [0,T])$ the boundary value problem
\begin{equation}\label{eq:nonlocal fixed pt}
	\begin{cases}
		\begin{array}{lll}
			(i)&\displaystyle w_t+\frac{\sigma^2}{2}w_{xx}-rw+\lambda\Psi+\lambda F^{(0)}(\varepsilon)\big(G(u_x,m;\varepsilon)-w_x\big)=0, \text{\hspace{.5cm}} &0\leq x< \infty,\,\,0\leq t\leq T\vspace{4pt}\\
			(ii)&\displaystyle w(x,T)=u_T(x), &0\leq x< \infty\vspace{4pt}\\
			(iii)&\displaystyle w(0,t)=0. &0\leq t\leq T,
		\end{array}
	\end{cases}
\end{equation}
and it satisfies the estimate
\begin{equation}
	\enVert{w}_{\s{C}^{2+\alpha,1+\alpha/2}} \leq C\del{\enVert{\Psi}_{\s{C}^{\alpha,\alpha/2}} + \enVert{G(u_x,m;\varepsilon)}_{\s{C}^{\alpha,\alpha/2}}
	+ \enVert{u_T}_{\s{C}^{2+\alpha}}}.
\end{equation}
We will denote $w = F(u)$.
Our goal it to show that $F$ is a contraction on a suitably defined metric space, and use this to prove \eqref{eq:nonlocal} has a solution.

Let $u_1,u_2 \in \s{C}^{2+\alpha,1+\alpha/2}(\intco{0,\infty} \times [0,T])$, then define $u = u_1 - u_2$ and $w = F(u_1) - F(u_2)$.
Note that $w$ satisfies
\begin{equation}\label{eq:nonlocal difference}
	\begin{cases}
		\begin{array}{lll}
			(i)&\displaystyle w_t+\frac{\sigma^2}{2}w_{xx}-rw+\lambda F^{(0)}(\varepsilon)\big(G(u_x,0;\varepsilon)-w_x\big)=0, \text{\hspace{.5cm}} &0\leq x< \infty,\,\,0\leq t\leq T\vspace{4pt}\\
			(ii)&\displaystyle w(x,T)=0, &0\leq x< \infty\vspace{4pt}\\
			(iii)&\displaystyle w(0,t)=0. &0\leq t\leq T.
		\end{array}
	\end{cases}
\end{equation}
Recalling the definition of $G$ in \eqref{def:G}, we see that there is a constant $C_1$, depending only on the data, such that
\begin{equation}
	\enVert{G(u_x,0;\varepsilon)}_{\s{C}^{\alpha,\alpha/2}} \leq C_1\enVert{u_x}_{\s{C}^{\alpha,\alpha/2}}.
\end{equation}
Combining this with classical estimates from \cite[Theorem IV.5.2]{ladyzhenskaia1968linear}, we see that there is some constant $C_2$, depending only on the data,  such that
\begin{equation}
	\enVert{w}_{\s{C}^{2+\alpha,1+\alpha/2}} \leq C_2\enVert{u_x}_{\s{C}^{\alpha,\alpha/2}}.
\end{equation}
Using interpolation on H\"older spaces, we deduce that there exists a constant $C_3$, depending only on $C_2$, such that
\begin{equation}
	C_2\enVert{v_x}_{\s{C}^{\alpha,\alpha/2}} \leq \frac{1}{4}\enVert{v}_{\s{C}^{2+\alpha,1+\alpha/2}} + C_3\enVert{v}_\infty \quad \forall v \in \s{C}^{2+\alpha,1+\alpha/2}.
\end{equation}
Hence 
\begin{equation}
	\enVert{w}_{\s{C}^{2+\alpha,1+\alpha/2}} \leq \frac{1}{4}\enVert{u}_{\s{C}^{2+\alpha,1+\alpha/2}} + C_3\enVert{u}_\infty.
\end{equation}
Next, we define $\tilde w(x,t) = e^{r(T-t)}w \pm r^{-1}(e^{r(T-t)} - 1)C_1\enVert{u_x}_{\s{C}^{\alpha,\alpha/2}}$, which satisfies
\begin{equation}
	\pm (-\tilde w_t - \frac{\sigma^2}{2} \tilde w_{xx} - \lambda F^{(0)}(\varepsilon)\tilde w_x) \leq 0.
\end{equation}
Apply the maximum principle to $\tilde w$ to deduce that
\begin{equation}
	\abs{w(x,t)} \leq r^{-1}(e^{r(T-t)} - 1)C_1\enVert{u_x}_{\s{C}^{\alpha,\alpha/2}}
	\leq (T-t)e^{rT}C_1\enVert{u}_{\s{C}^{2+\alpha,1+\alpha/2}}.
\end{equation}
We now apply all of these estimates only on the time interval $[T-\tau,T]$ for some $\tau > 0$.
We deduce that
\begin{multline}
	\enVert{w}_{\s{C}^{2+\alpha,1+\alpha/2}(\intco{0,\infty} \times [T-\tau,T])} 
	+ 2C_3\enVert{w}_{L^\infty(\intco{0,\infty} \times [T-\tau,T])}
	\\
	\leq \del{\frac{1}{4} + 2C_3\tau e^{rT}C_1} \enVert{u}_{\s{C}^{2+\alpha,1+\alpha/2}(\intco{0,\infty} \times [T-\tau,T])} + C_3\enVert{u}_{L^\infty(\intco{0,\infty} \times [T-\tau,T])}.
\end{multline}
We now set $\tau = \dfrac{1}{8C_3 e^{rT}C_1}$.
Define $\s{Y}_\tau$ to be the space $\s{C}^{2+\alpha,1+\alpha/2}(\intco{0,\infty} \times [T-\tau,T])$ endowed with the norm
\begin{equation}
	\enVert{w}_{\s{Y}_\tau} := \enVert{w}_{\s{C}^{2+\alpha,1+\alpha/2}(\intco{0,\infty} \times [T-\tau,T])} 
	+ 2C_3\enVert{w}_{L^\infty(\intco{0,\infty} \times [T-\tau,T])}.
\end{equation}
Observe that $\s{Y}_\tau$ is a Banach space.
Moreover, by the above estimates, $F:\s{Y}_\tau \to \s{Y}_\tau$ is a contraction, since $\enVert{F(u_1) - F(u_2)}_{\s{Y}_\tau} \leq \frac{1}{2}\enVert{u_1 - u_2}_{\s{Y}_\tau}$.
Hence $F$ has a unique fixed point $u$, which is a solution to \eqref{eq:nonlocal} and satisfies the estimate \eqref{eq:parabolic estimate Holder} on the time interval $[T-\tau,T]$.
However, $T$ is arbitrary.
We can now partition the interval $[0,T]$ into subintervals that are each at most $\tau$ in length, i.e.~$0 = t_0 < t_1 < \cdots < t_N = T$ where $t_{j+1} - t_j \leq \tau$.
Apply the same argument on each subinterval $[t_{j-1},t_j]$, replacing the final condition $u_T(x)$ with $w(x,t_j)$, for each $j$ starting with $N$ and going down to 1.
(Cf.~the proof of \cite[Proposition 3.11]{cirant2019existence}.)
In this way we obtain a solution $u$ to Equation \eqref{eq:nonlocal}, which indeed satisfies \eqref{eq:parabolic estimate Holder}.
Uniqueness of this solution follows from uniqueness on each subinterval.

\end{proof}


\bibliographystyle{siam}
\bibliography{C:/mybib/mybib}
\end{document}